\newtheorem{proposition}{Proposition}[section]
\newtheorem{statement}[proposition]{Statement}
\newtheorem{lemma}[proposition]{Lemma}
\newtheorem{definition}[proposition]{Definition}
\newtheorem{theorem}[proposition]{Theorem}
\newtheorem{corollary}[proposition]{Corollary}
\newtheorem{example}[proposition]{Example}
\newtheorem{remark}[proposition]{Remark}
\newtheorem{construction}[proposition]{Construction}
\newtheorem{lemma-definition}[proposition]{Lemma-Definition}
\newtheorem{question}[proposition]{Question}
\newcounter{tmp}
\def\coh{\operatorname{coh}}
\def\Qcoh{\operatorname{Qcoh}}
\def\lto{\longrightarrow}
\def\A{{\mathcal A}}
\def\B{{\mathcal B}}
\def\C{{\mathcal C}}
\def\D{{\mathcal D}}
\def\F{{\mathcal F}}
\def\E{{\mathcal E}}
\def\G{{\mathcal G}}
\def\H{{\mathcal H}}
\def\K{{\mathcal K}}
\def\N{{\mathcal N}}
\def\cO{{\mathcal O}}
\def\R{{\mathcal R}}
\def\L{{\mathcal L}}
\def\M{{\mathcal M}}
\def\cS{{\mathcal S}}
\def\T{{\mathcal T}}
\def\I{{\mathcal I}}
\def\ZZ{{\mathbb Z}}
\def\CC{{\mathbb C}}
\def\bR{{\mathbf R}}
\def\bp{{\mathbf p}}
\def\bL{{\mathbf L}}
\def\NN{{\mathbb N}}
\def\ZZ{{\mathbb Z}}
\def\CC{{\mathbb C}}
\def\PP{{\mathbb P}}
\def\Hom{\operatorname{Hom}}
\def\End{\operatorname{End}}
\def\Ext{\operatorname{Ext}}
\def\Spec{\operatorname{Spec}}
\def\id{{\operatorname{id}}}
\def\kk{{\mathbf k}}
\def\op{\circ}
\def\pr{\operatorname{pr}}
\def\prd{\boxtimes}
\newcommand{\Ho}{{\H^0}}
\newcommand{\Ob}{\operatorname{Ob}}
\newcommand{\SF}{\dS\!\dF\!\operatorname{--}\!}
\newcommand{\SFf}{\dS\!\dF_{fg}\!\operatorname{--}\!}
\newcommand{\prfdg}{\mathscr{P}\!\mathit{erf}\!\operatorname{--}}
\newcommand{\dRep}{\mathscr{R}\!\mathit{ep}}
\newcommand{\Ac}{\dA\!\mathit{c}\!\operatorname{--}\!}
\newcommand{\dCom}{\dC\!\mathit{om}\!\operatorname{--}\!}
\newcommand{\dAc}{\dA\!\mathit{c}\!\operatorname{--}\!}
\newcommand{\prf}{\mathcal{P}\!\mathit{erf}\!\operatorname{--}}
\newcommand{\dHf}{\dF\!\mathit{lat}\!\operatorname{--}\!}
\newcommand{\dHAc}{\dA\!\mathit{c}_{f}\!\operatorname{--}\!}
\def\dA{\mathscr A}
\def\dB{\mathscr B}
\def\dC{\mathscr C}
\def\dD{\mathscr D}
\def\dE{\mathscr E}
\def\dT{\mathscr T}
\def\dF{\mathscr F}
\def\dS{\mathscr S}
\def\dM{\mathscr M}
\def\dN{\mathscr N}
\def\dP{\mathscr P}
\def\dR{\mathscr R}
\def\dI{\mathscr I}
\def\Mod{{\mathscr M}\!\mathit{od}\!\operatorname{--}\!}
\def\mC{\mathsf C}
\def\mM{\mathsf M}
\def\mE{\mathsf E}
\def\mG{\mathsf G}
\def\mF{\mathsf F}
\def\mN{\mathsf N}
\def\mP{\mathsf P}
\def\mT{\mathsf T}
\def\mS{\mathsf S}
\def\mU{\mathsf U}
\def\mV{\mathsf V}
\def\ma{\mathsf a}
\def\mb{\mathsf b}
\def\me{\mathsf e}
\def\mh{\mathsf h}
\def\mf{\mathsf f}
\def\mi{\mathsf i}
\def\mpr{\mathsf{pr}}
\def\mp{\mathsf{p}}
\def\mPhi{\mathsf \Phi}
\def\mPsi{\mathsf \Psi}
\def\mpi{\mathsf \pi}
\def\mtau{\mathsf \tau}
\def\mX{X}
\def\mY{Y}
\def\mZ{Z}
\def\dHom{\mathsf{Hom}}
\def\tr{\mathcal{T}\!\mathit{r}}
\def\ptr{\operatorname{pre-tr}}
\def\lHom{\underline{\H\!\mathit{om}}}
\def\wt{\widetilde}
\def\Tor{\T\mathit{or}}
\def\DGcat{{\D\!\G\!\mathit{cat}}}
\def\Hqe{{\H\!\mathit{qe}}}
\def\Rep{{\R\!\mathit{ep}}}
\def\Tot{\operatorname{Tot}}
\def\RPNS{\mathbf{NSch_{sm}^{pr}}}
\def\GNS{\mathbf{GNSch}}
\def\rd{\mathfrak{R}}
\def\md{\operatorname{mod}\!\operatorname{--}\!}
\def\Md{\operatorname{Mod}\!\operatorname{--}\!}
\def\gldim{\operatorname{gl.dim}\;}
\newenvironment{dok}{\par\vspace{-5pt}%
\par\noindent\begingroup%
\leftskip=0em\hspace{0em}{\bf Proof.}}%
{\endgroup\hfill$\Box$}
\def\hy{\mbox{-}}
\def\llto{{\;\relbar\joinrel\relbar\joinrel\relbar\joinrel\relbar\joinrel
\rightarrow\;}}
\title[]{Smooth and proper noncommutative schemes and\\ gluing of  DG categories}
\author[]{Dmitri Orlov}
\address{ Algebraic Geometry Section, Steklov Math. Institute RAS,
8 Gubkin str., Moscow 119991, RUSSIA}
\email{orlov@mi.ras.ru}
\thanks{This work is supported by the Russian Science Foundation under  grant 14-50-00005}
\date{}
\dedicatory{To my wife Maria with thankfulness and  love}
\keywords{Coherent sheaves, perfect complexes, differential graded categories, triangulated categories,  noncommutative schemes, noncommutative geometry}
\subjclass[2010]{14F05, 18E30, 16E45}
\begin{document}
\begin{abstract}
In this paper we discuss different properties of noncommutative schemes over a field.
We define a noncommutative scheme as a differential graded category of a special type.
We study regularity, smoothness and properness for noncommutative schemes.
Admissible subcategories
of categories of perfect complexes on smooth projective schemes provide natural examples of  smooth and proper noncommutative schemes
that are called geometric noncommutative schemes.
In this paper we show that the world of all geometric noncommutative schemes is closed
under an operation of a gluing of differential graded categories via  bimodules.
As a consequence of the main theorem we obtain that
for any finite dimensional algebra with separable semisimple part
the category of perfect complexes over it is equivalent to a full subcategory of the category of perfect complexes on
a smooth projective scheme.
Moreover, if the algebra has  finite global dimension, then
the full subcategory is  admissible.
We also provide a construction of a smooth projective scheme
that admits a full exceptional collection and contains
as a subcollection an exceptional collection  given in advance.
As another application of the main theorem we obtain, in characteristic 0, an existence of a full embedding for the category of perfect complexes
on any proper scheme to the category of perfect complexes on a smooth projective scheme.
\end{abstract}

\maketitle

%\tableofcontents

\section*{Introduction}

One of the main approaches to noncommutative geometry is to consider categories of sheaves on varieties instead of  varieties
themselves. In algebraic geometry only quasi-coherent sheaves represent well the algebraic structure of a variety and
do not depend on a topology. Besides, homological algebra convinces to consider derived categories whenever we meet an abelian category.
Thus, instead of schemes one  may consider  derived versions of categories of quasi-coherent sheaves and triangulated category
of perfect complexes that are compact objects therein. This approach is very powerful.

Let $X$ be a scheme over a field $\kk.$ Studying schemes it is natural to consider the unbounded
derived category of quasi-coherent sheaves $\D(\Qcoh X)$ and the unbounded derived category of complexes of $\cO_X$\!--modules with
quasi-coherent cohomology $\D_{\Qcoh}(X).$ Fortunately, and it is well known and proved in \cite{BN},  for  a quasi-compact and separated scheme $X$
the canonical functor $\D(\Qcoh X)\to \D_{\Qcoh}(X)$ is an equivalence. Moreover, it was shown in \cite{Ne}
that in this case the derived category $\D(\Qcoh X)$ has enough compact objects and the subcategory
of compact objects is nothing else but the subcategory of perfect complexes $\prf X.$
Recall that a complex is called perfect if it is locally quasi-isomorphic to a bounded complex
of locally free sheaves of finite type.

Furthermore, it was proved in \cite{Ne, BVdB} that the category $\prf X$ admits a classical generator $\mE,$ i.e.
the minimal full triangulated subcategory of $\prf X$ that contains $\mE$ and is closed under direct summands
coincides with the whole $\prf X.$ As a consequence, such a perfect complex $\mE$ is
a compact generator of the whole $\D(\Qcoh X).$

It is very useful to consider a triangulated category $\T$ together with
an {\em enhancement} $\dA,$ which is a differential graded (DG) category
with the same objects as in $\T,$ but
the set of morphisms between two objects in $\dA$ is a complex of vector spaces.
One recovers morphisms in $\T$ by taking the cohomology $H^0$ of the
corresponding morphism complex in $\dA.$ Thus,
$\T$ is the {\em homotopy} category $\Ho (\dA)$ of the DG category $\dA.$
Such an $\dA$ is called an {\em enhancement} of $\T.$

The triangulated category $\D(\Qcoh X)$ has several natural DG enhancements:
the category of h-injective complexes, the DG quotient of all complexes by acyclic complexes, the DG quotient
of h-flat complexes by acyclic h-flat complexes. They all are quasi-equivalent and we can work with any of them.
Denote by $\dD(\Qcoh X)$ a DG enhancement of $\D(\Qcoh X)$ and by $\prfdg X$ the induced
DG enhancement of the category of perfect complexes $\prf X.$

Let us take a generator $\mE\in\prfdg X.$ Denote by $\dE$ its DG algebra of endomorphisms,
i.e. $\dE=\dHom(\mE, \mE).$ Since $\mE$ is perfect the DG algebra $\dE$ has only finitely many nonzero cohomology groups.
Keller's results from \cite{Ke} imply that
the DG category  $\prfdg X$ is quasi-equivalent to
$\prfdg \dE,$ where $\dE$ is a cohomologically bounded DG algebra and $\D(\Qcoh X)$ is equivalent
to the derived category of DG $\dE$\!--modules $\D(\dE).$

The facts described above allow us to suggest a definition of a {\em (derived) noncommutative scheme} over $\kk$
as a $\kk$\!--linear DG category of the form $\prfdg\dE,$
where $\dE$ is a cohomologically  bounded DG algebra over $\kk.$
In this case the derived category
$\D(\dE)$ will be called the derived category of quasi-coherent sheaves on this noncommutative scheme.

The simplest and, apparently, the most important class of schemes is the class of smooth and projective schemes
or, more generally, the class of schemes that are regular and proper.
The properties of regularity, smoothness and properness can be interpreted in categorical terms and
can be extended to noncommutative schemes.
We say that  a $\kk$\!--linear triangulated category $\T$ is {\em proper} if
$\bigoplus_{m\in\ZZ}\Hom(X, Y[m])$ is finite dimensional for any two objects $X, Y\in\T.$
It will be called {\em regular} if it has a strong generator, i.e. such an object that generates the whole $\T$
 in a finite number of steps
(Definitions \ref{strong_gen} and \ref{reg_and_prop}).
The notion of smoothness is well-defined for a DG category. A $\kk$\!--linear DG category $\dA$ is called {\em $\kk$\!-smooth} if it is perfect as a module over  $\dA^{\op}\otimes_{\kk}\dA.$
Application of these definitions to a DG category of perfect objects $\prfdg\dE$ and its homotopy category
 $\prf\dE$ leads  us to well-defined notions of a
 regular, smooth, and proper  noncommutative schemes.

It can be proved that a usual separated noetherian scheme $X$ is regular if and only if the category
$\prf X$ is regular (Theorem \ref{regular}). Furthermore, for separated schemes of finite type
properness of $X$ is equivalent to properness of $\prf X$
(Propositions \ref{prop_scheme}) and
smoothness of $X$ is equivalent of $\prfdg X$
(\cite{Lu} and Propositions \ref{smooth_proper}).
These facts imply that smooth and projective schemes form a subclass of
the class of smooth and proper noncommutative schemes. It is  not difficult to give an example of a noncommutative
smooth and proper scheme $\prfdg\dE$ that is not quasi-equivalent to $\prfdg X$ of a usual commutative scheme.

Let us consider the world of all smooth projective schemes.
As in the theory of motives, an important step is adding direct summands,
in our situation it is  natural to extend the world of smooth projective schemes to the world of all admissible subcategories
$\N\subset \prf X,$ where $X$ is a smooth and projective scheme.
(Recall that a full triangulated subcategory $\N\subset\prf X$ is called admissible if the inclusion
functor has right and left adjoint functors and, hence, $\N$ is a semi-orthogonal summand of $\prf X.$)
Such admissible subcategories are also smooth and proper, and they give natural examples of smooth and proper noncommutative schemes, which will be called {\em geometric noncommutative schemes}.

One may consider the 2-category of smooth and proper noncommutative schemes
$\RPNS$ over a field $\kk.$
Objects of $\RPNS$ are DG categories $\dA$ of the form $\prfdg\dE,$ where
$\dE$ is a smooth and proper DG algebra; 1-morphisms are
quasi-functors, i.e. DG functors modulo inverting quasi-equivalences; and   2-morphisms are morphisms of quasi-functors.
The 2-category $\RPNS$ has a full 2-subcategory of geometric smooth and proper noncommutative schemes $\GNS.$
Evidently, $\GNS$ contains all smooth and projective commutative schemes. Moreover, a To\"en's theorem \cite{To}
says  that quasi-functors from $\prfdg X$ to $\prfdg Y$ correspond bijectively to perfect complexes on the product, i.e.
 $\prf(X\times Y)$ is the category of morphisms
from $X$ to $Y$ in  $\RPNS.$

The world of smooth and proper noncommutative schemes is plentiful and multiform in the sense of different constructions and operations.
For instance, it contains all $\prfdg \Lambda$ for all finite dimensional algebras $\Lambda$ of finite global dimension.
Besides, for any two noncommutative schemes $\dA$ and $\dB$  every perfect $\dB^{\op}\hy\dA$\!--bimodule
$\mS$ produces a new noncommutative scheme $\dC=\dA\underset{\mS}{\oright}\dB,$ which we call the gluing of
 $\dA$ and $\dB$ via $\mS.$ The resulting DG category $\dC$ is also smooth and proper (Definition
\ref{gluing_cat} and Section \ref{reg_proper_noncommutative}).
Its homotopy category has a semi-orthogonal decomposition of the form $\Ho(\dC)=\langle\Ho(\dA),\, \Ho(\dB)\rangle.$
Of course, this procedure can be iterated and it allows to reproduce new and new noncommutative schemes.
If we glue  commutative schemes like $\prfdg X$ and $\prfdg Y,$ then the result is almost never a commutative scheme, except
for a few special examples (Examples \ref{ex1} and \ref{ex2}).

The main purpose of this paper is to show that the world of all geometric noncommutative schemes is closed
under operation of the gluing. More precisely, we prove that for any smooth and projective
$X$ and $Y$  the gluing $\prfdg X\underset{\mS}{\oright}\prfdg Y$
of two DG categories of the form $\prfdg X$ and $\prfdg Y$ via arbitrary perfect bimodule $\mS$
is a geometric noncommutative scheme, i.e. it is quasi-equivalent to an admissible full DG subcategory in $\prfdg V$
for some smooth projective scheme $V$ (Theorem \ref{main}).
This result implies that the subcategory of smooth proper geometric noncommutative schemes is closed under gluing
via any bimodules (Theorem \ref{main2}).
%One of the main tools of the proof is Proposition \ref{line}, which allows us to obtain any strict perfect complex on a %scheme as the derived direct image of a line bundle with respect to a smooth morphism.

These theorems have useful applications. Using results of \cite{KL} we obtain that
 for any proper scheme
$Y$ over a field of characteristic $0$ there is a full embedding of $\prf Y$ into $\prf V,$ where $V$ is smooth and projective (Corollary \ref{emb}).
In Section \ref{applications} we show that for any finite dimensional algebra $\Lambda$
with the semisimple part $S=\Lambda/\rd$ separable over the base field $\kk,$ there exist a smooth projective scheme
$V$ and a perfect complex $\E^{\cdot}$ on $X$ such that $\End(\E^{\cdot})=\Lambda$ and
$\Hom(\E^{\cdot}, \E^{\cdot}[l])=0$ when $l\ne 0$ (Theorem \ref{algebra}).
As a consequence of this theorem we obtain that
for any finite dimensional algebra $\Lambda$ over $\kk$ with separable semisimple part $S=\Lambda/\rd$
there is a smooth projective scheme $V$ such that
the DG category $\prfdg\Lambda$ is quasi-equivalent to a full DG subcategory of $\prfdg V.$
Moreover, if $\Lambda$ has  finite global dimension, then
$\prf\Lambda$ is  admissible in $\prf V$ (Corollary \ref{algebra_inclusion}).
Note that over a perfect field all algebras a separable.

In Section \ref{exceptional_coll} we give an alternative and  more useful procedure of constructing a smooth projective scheme
that admits a full exceptional collection and contains
as a subcollection an exceptional collection  given in advance.
More precisely, for any DG category $\dA,$ for which  the homotopy category
$\Ho(\dA)$ has a full exceptional collection,
we give an explicit construction of a smooth projective scheme $X$ and an exceptional collection of line bundles
$\sigma=(\L_1,\dots, \L_n)$ in $\prf X$ such that the DG subcategory $\dN\subset\prfdg X$
generated by $\sigma$ is quasi-equivalent to $\dA.$
Moreover, by construction $X$ is rational and has a full exceptional collection (Theorem \ref{exc_col}).

In the last section we illustrate this theorem considering the case
of noncommutative projective planes, in the sense of noncommutative deformations of the usual projective plane,
which have been introduced and described by M.~Artin, J.~Tate, and M.~Van den Bergh in \cite{ATV}.

The author is very grateful to Valery Alexeev, Alexei Bondal, Sergei Gorchinskiy,
Anton Fonarev, Alexander Kuznetsov, Valery Lunts, Amnon Neeman, Stefan Nemirovski, Yuri Prokhorov,
Constantin Shramov for useful and valuable discussions.
The author would like to thank anonymous referee and
Aise Johan de Jong for  pointing out the results of the publication \cite{LN},
which allowed to prove Proposition \ref{prop_scheme} in full generality.
The author wishes to express his gratitude to Theo Raedschelders who drew attention
to the results of Dieter Happel's paper \cite{Hap}.

\section{Preliminaries on triangulated categories, generators, and semi-orthogonal decompositions }

\subsection{Generators in triangulated categories}

In this section we discuss different notions of generators in triangulated categories.
Let $\T$ be a triangulated category and $S$ be a set of objects.

\begin{definition}
A set of objects $S\subset \Ob\T$
{\em generates} the triangulated category $\T$ if $\T$ coincides with the smallest strictly full triangulated subcategory of
$\T$ which contains $S.$ (Strictly full means it is full and closed under isomorphisms).
\end{definition}

The notion of generating a triangulated category is very rigid, because a triangulated subcategory
that is generated by a set of objects is not necessarily idempotent complete. A much more useful notion of generating
a triangulated category is the notion of a set of classical generators.

\begin{definition}
A set of objects $S\subset \Ob\T$
forms a {\em set of classical generators} for $\T$ if the category $\T$ coincides with the smallest triangulated subcategory of
$\T$ which contains $S$ and is closed under taking direct summands.
When $S$ consists of a single object we obtain the notion of a {\em classical generator}.
\end{definition}
If a classical generator $X$ generates the whole category in a finite number of steps, then it is called
a {\em strong generator}.
More precisely, let $\I_1, \I_2 \subset
\mathcal{T}$ be two full subcategories. Define their product as
\[ \I_1 * \I_2 = \bigl\{ \text{the full subcategory, consisting on all objects $Y$ of the form} : Y_1 \to Y \to Y_2 \text{ , } Y_i \in \I_i \bigr\} \text{ .} \]
Let $\langle \I \rangle$ be the smallest full subcategory
that contains $\I \subset \langle \I\rangle$ and that is
closed under shifts, finite direct sums, and direct summands. We call
$\langle \I \rangle$ the \emph{envelope} of $\I.$

Put
$\I_1 \diamond \I_2 := \langle \I_1 *
\I_2 \rangle$
and we define by induction
$\langle \I\rangle_k=\langle\I\rangle_{k-1}\diamond\langle \I\rangle.$ If $\I$ consists of a single object $X$ we denote $\langle \I\rangle$ by
 $\langle X\rangle_1$ and put by induction $\langle X\rangle _k=\langle X\rangle_{k-1}\diamond\langle X\rangle_1.$

\begin{definition}\label{strong_gen}
An object $X$ is called a {\em strong generator} if $\langle X\rangle_n=\T$ for some $n\in\NN.$
\end{definition}

\begin{remark}{\rm
If $X \in \mathcal{T}$ is a classical generator, then $\mathcal{T} =
\bigcup_{i=1}^\infty \langle X \rangle_i.$
It is also easy to see that if $\mathcal T$ has a strong generator,
then any classical generator is strong as well.}
\end{remark}

Following \cite{Ro} we define the dimension of a triangulated
category.
\begin{definition}
The {\em dimension} of a triangulated category $\T,$ denoted by $\dim \T,$ is the smallest integer $d\ge 0$ such that there exists an object $X\in \T$
for which $\langle X\rangle_{d+1}=\T.$
\end{definition}

Let now $\T$ be a triangulated category which admits arbitrary small coproducts (direct sums).
Such a category is called {\em cocomplete.}

\begin{definition} Let $\T$ be a cocomplete triangulated category.
An object $X\in \T$ is called {\em compact} in $\T$ if $\Hom(X,-)$ commutes with arbitrary small coproducts, i.e. for any
set of objects $\{ Y_i\}\subset \T$ the canonical map
$
\bigoplus_i\Hom (X, Y_i){\longrightarrow}
\Hom (X, \bigoplus_i Y_i)
$ is an isomorphism.
\end{definition}
Compact objects in $\T$ form a triangulated subcategory denoted by $\T^c\subset \T.$

\begin{definition}\label{van}
 Let $\T$ be a cocomplete triangulated category. A set $S \subset \Ob\T^c$ is
called a {\em set of compact generators} if
any object $Y\in \T$ for which $\Hom(X, Y[n])=0$ for all $X\in S$ and  all $n\in \ZZ$ is a zero object.
\end{definition}
\begin{remark}
{\rm
Since $\T$ is cocomplete, it can be proved that the property of $S \subset \Ob\T^c$ to be
 a set of compact generators is  equivalent to the following property:
the category $\T$ coincides with the smallest full
triangulated subcategory containing $S$ and  closed under
small coproducts \cite{Ne1}.
}
\end{remark}

\begin{remark}\label{twodef}{\rm
The definition of compact generators is  closely related to the definition of classical generators.
Assume that a cocomplete triangulated category $\T$  is compactly generated by the set
of compact objects $\T^c.$ In this case a set $S\subset \T^c$ is a set of compact generators of $\T$ if and only if
the set $S$ is a set of classical generators of the subcategory of compact objects $\T^c$ \cite{Ne1}.
}
\end{remark}

Let  $\T$ be  a cocomplete triangulated category and let
$X\in\T^c$ be a compact object. If on each step we add not only finite sums but also all arbitrary direct sums
one can define full subcategories $\widebar{\langle X\rangle}_k\subset\T.$
The following proposition is proved in \cite[2.2.4]{BVdB}.

\begin{proposition}\label{compact_gen}
If $X$ is a compact object in a cocomplete triangulated category $\T,$ then
$\widebar{\langle X\rangle}_k\bigcap\T^c=\langle X\rangle_k.$
\end{proposition}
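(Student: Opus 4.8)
The plan is to prove the inclusion $\widebar{\langle X\rangle}_k\cap\T^c\supseteq\langle X\rangle_k$ trivially (this direction only requires that each $\widebar{\langle X\rangle}_k$ is closed under shifts, finite sums, direct summands and the $*$-operation, which holds by construction, together with the obvious inclusion $\langle X\rangle_k\subseteq\widebar{\langle X\rangle}_k$), and to concentrate all the work on the reverse inclusion $\widebar{\langle X\rangle}_k\cap\T^c\subseteq\langle X\rangle_k$. I would argue by induction on $k$. The base case $k=1$ asserts that a compact object lying in $\widebar{\langle X\rangle}_1$ — the closure of $\{X\}$ under arbitrary coproducts, shifts and direct summands — is already a direct summand of a finite coproduct of shifts of $X$, i.e.\ lies in $\langle X\rangle_1$; this is the standard compactness argument that a map from a compact object into an arbitrary coproduct factors through a finite subcoproduct.

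For the inductive step, suppose the statement holds for $k-1$ and take $Z\in\widebar{\langle X\rangle}_k\cap\T^c$. By definition of $\widebar{\langle X\rangle}_k=\widebar{\langle X\rangle}_{k-1}\diamond\widebar{\langle X\rangle}_1$, after passing to a direct summand we may present $Z$ (or rather an object of which $Z$ is a summand) inside a triangle $A\to W\to B\to A[1]$ with $A\in\widebar{\langle X\rangle}_{k-1}$ and $B\in\widebar{\langle X\rangle}_1$, where $Z$ is a summand of $W$. The key point is to replace $A$ and $B$ by compact objects without leaving the respective classes: using that $Z$ is compact, the composite $Z\to W\to B$ lands, after the compactness/factorization argument, in a finite piece, so one can arrange $B$ to be compact, hence $B\in\widebar{\langle X\rangle}_1\cap\T^c=\langle X\rangle_1$ by the base case; then the octahedral axiom (or a standard homotopy-pullback/cone manipulation) produces a compact object $A'\in\widebar{\langle X\rangle}_{k-1}\cap\T^c$ together with a triangle exhibiting $Z$ as built from $A'$ and $B$ in $\langle A'\rangle*\langle B\rangle$. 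By the inductive hypothesis $A'\in\langle X\rangle_{k-1}$, and therefore $Z\in\langle X\rangle_{k-1}\diamond\langle X\rangle_1=\langle X\rangle_k$.

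The main obstacle I anticipate is precisely this \emph{compact approximation of the triangle}: given the triangle $A\to W\to B\to A[1]$ realizing $W$ (with $Z$ a summand) and the fact that only $Z$, not $W$, is known to be compact, one must carefully extract finite/compact data from $A$ and $B$ simultaneously and reassemble them so that the new triangle still has $Z$ as a summand and still has its outer vertices in $\widebar{\langle X\rangle}_{k-1}$ and $\widebar{\langle X\rangle}_1$ respectively. This requires a careful bookkeeping with homotopy colimits: $A$ and $B$ are filtered (transfinite) colimits of objects built from $X$ in the appropriate number of steps, and one must commute $\Hom(Z,-)$ past these colimits (legitimate since $Z$ is compact — though for transfinite colimits one needs the version of compactness that handles all small coproducts, which is what is assumed). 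Once the finite approximations are chosen compatibly, the octahedral manipulation producing $A'$ is formal. I would model this argument on \cite[2.2.4]{BVdB}, which is cited as the source, and on the analogous "approximation" lemmas in Neeman's and Rouquier's work.
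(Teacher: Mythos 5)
Your outline takes the same route as the source the paper itself relies on for this statement (the paper offers no proof of its own and simply cites \cite[2.2.4]{BVdB}): the trivial inclusion $\langle X\rangle_k\subseteq\widebar{\langle X\rangle}_k\cap\T^c$, then induction on $k$ with compactness used to factor maps into coproducts through finite subcoproducts, and an octahedral reassembly in the inductive step. The one refinement worth making explicit is that the induction runs most cleanly on the stronger factorization statement --- every morphism from a compact object into $\widebar{\langle X\rangle}_j$ factors through an object of $\langle X\rangle_j$ --- applied to the splitting monomorphism $Z\to W$, after which $Z\in\langle X\rangle_k$ because $\mathrm{id}_Z$ factors through an object of $\langle X\rangle_{k-1}*\langle X\rangle_1$; this sidesteps the slightly delicate claim that one can replace the vertices of the \emph{given} triangle by compact ones while keeping $Z$ a summand of its middle term.
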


In particular, if $\widebar{\langle X\rangle}_k=\T$ for some $k,$ then $\langle X\rangle_k=\T^c$ and
$X$ is a strong generator of $\T^c.$

\subsection{Semi-orthogonal decompositions}

Let $\T$ be a $\kk$\!--linear triangulated category, where $\kk$ is a base field.
Recall some definitions and facts concerning admissible
subcategories and semi-orthogonal decompositions.
Let ${\N\subset\T}$ be a full triangulated subcategory.  The {\em
right orthogonal} to ${\N}$ is the full subcategory
${\N}^{\perp}\subset {\T}$ consisting of all objects $X$ such that
${\Hom(Y, X)}=0$ for any $Y\in{\N}.$ The {\em left orthogonal}
${}^{\perp}{\N}$ is defined analogously.  The orthogonals are also
triangulated subcategories.

\begin{definition}\label{adm}
Let $I\colon\N\hookrightarrow\T$ be a full embedding of triangulated
categories. We say that ${\N}$
is {\em right admissible} (respectively {\em left admissible}) if
there is a right (respectively left) adjoint functor $Q\colon\T\to \N.$ The
subcategory $\N$ will be called {\em admissible} if it is both right and left
admissible.
\end{definition}
\begin{remark}\label{semad}
{\rm The subcategory $\N$ is right admissible
if and only if for each object $Z\in{\T}$ there is an
exact triangle $Y\to Z\to X,$ with $Y\in{\N},\, X\in{\N}^{\perp}.$ }
\end{remark}

Let $\N$ be a full triangulated subcategory in a triangulated category
$\T.$ If $\N$ is right (respectively left) admissible, then the
quotient category $\T/\N$ is equivalent to $\N^{\perp}$ (respectively~${}^{\perp}\N$).  Conversely, if the quotient functor $Q\colon\T\lto\T/\N$
has a left (respectively right) adjoint, then~$\T/\N$ is equivalent to
$\N^{\perp}$ (respectively~${}^{\perp}\N$).

\begin{definition}\label{sd}
A {\em semi-orthogonal decomposition} of a triangulated category $\T$
is a sequence of full triangulated subcategories ${\N}_1, \dots, {\N}_n$ in ${\T}$
such that there is an increasing filtration $0=\T_0\subset\T_1\subset\cdots\subset\T_n=\T$
by left admissible subcategories for which the left orthogonals ${}^{\perp}\T_{i-1}$
in $\T_{i}$ coincide with $\N_i.$ In particular, $\N_i\cong\T_i/\T_{i-1}.$
We write
$
{\T}=\left\langle{\N}_1, \dots, {\N}_n\right\rangle.
$
\end{definition}

If we have a semi-orthogonal decomposition ${\T}=\left\langle{\N}_1, \dots, {\N}_n\right\rangle,$ then the inclusion functors induce an isomorphism of the Grothendieck groups
\[
K_0(\N_1)\oplus K_0(\N_2)\oplus\cdots\oplus K_0(\N_n)\cong K_0(\T).
\]

It is more convenient to consider so called enhanced triangulated categories, i.e. triangulated categories
that are  homotopy categories  of pretriangulated DG categories (see Section \ref{enhancements}).
An enhancement of a triangulated category  $\T$ induces an enhancement of any full triangulated subcategory
$\N\subset\T.$
Using an enhancement of a triangulated category $\T$ we can define the K--theory spectrum $K(\T)$ of $\T$
(see \cite{Ke2}).  It also gives us an additive invariant (see, for example, \cite[5.1]{Ke2}), i.e for any
semi-orthogonal decomposition we have an isomorphism
\[
K_*(\N_1)\oplus K_*(\N_2)\oplus\cdots\oplus K_*(\N_n)\cong K_*(\T).
\]

\subsection{Exceptional, w-exceptional, and semi-exceptional collections}

The existence of a semi-orthogonal decomposition on a triangulated
category $\T$ clarifies the structure of $\T.$ In the best scenario,
one can hope that $\T$ has a semi-orthogonal decomposition
${\T}=\left\langle{\N}_1, \dots, {\N}_n\right\rangle$ in which each
$\N_p$ is as simple as possible, i.e. is
equivalent to the bounded derived category of finite-dimensional
vector spaces.

\begin{definition}\label{exc}
An object $E$ of a $\kk$\!--linear triangulated category ${\T}$ is
called {\em exceptional} if  ${\Hom}(E, E[l])=0$ whenever $l\ne 0,$ and
${\Hom}(E, E)=\kk.$ An {\em exceptional collection} in ${\T}$ is a
sequence of exceptional objects $\sigma=(E_1,\dots, E_n)$ satisfying the
semi-orthogonality condition ${\Hom}(E_i, E_j[l])=0$ for all $l$ whenever
$i>j.$
\end{definition}

If a triangulated category $\T$ has an exceptional collection
$\sigma=(E_1,\dots, E_n)$ that generates the whole of $\T,$ then
this collection is called {\em full}.  In this case $\T$ has a
semi-orthogonal decomposition with $\N_p=\langle E_p\rangle.$ Since
$E_{p}$ is exceptional, each of these categories is equivalent to the
bounded derived category of finite dimensional $\kk$\!-vector spaces.  In
this case we write $ \T=\langle E_1,\dots, E_n \rangle.$

\begin{definition}\label{strong}
An exceptional collection $\sigma=(E_1,\dots, E_n)$ is called {\em strong} if, in addition,
${\Hom}(E_i, E_j[l])=0$ for all  $i$ and $j$ when $l\ne 0.$
\end{definition}

The best known example of an exceptional collection is the sequence
of invertible sheaves
$\left(\mathcal{O}_{\mathbb{P}^n},\dots,\mathcal{O}_{\mathbb{P}^n}(n)\right)$
on the projective space $\mathbb{P}^n.$ This exceptional collection is
full and strong.

When the field $\kk$ is not algebraically closed it is reasonable to weaken the notions of an exceptional
object and an exceptional collection.
\begin{definition}\label{semi-exc}
An object $E$ of a $\kk$\!--linear triangulated category ${\T}$ is
called {\em w-exceptional (weak exceptional)} if  ${\Hom}(E, E[l])=0$ when $l\ne 0,$ and
${\Hom}(E, E)=D,$ where $D$ is a finite dimensional division algebra over $\kk.$
It is called {\em semi-exceptional} if ${\Hom}(E, E[l])=0$ when $l\ne 0$ and
${\Hom}(E, E)=S,$ where $S$ is a finite dimensional semisimple algebra over $\kk.$
\end{definition}

It is evident that exceptional and semi-exceptional objects are stable under base field change while
w-exceptional objects are not.

A {\em w-exceptional (semi-exceptional) collection} in ${\T}$ is a
sequence of w-exceptional (semi-exceptional) objects $(E_1,\dots, E_n)$ with
semi-orthogonality conditions ${\Hom}(E_i, E_j[l])=0$ for all $l$ whenever
$i>j.$

\begin{example}\label{Severi-Brauer}
{\rm
Let $\kk$ be a field and $D$ be a central simple algebra over $\kk.$
Consider a Severi-Brauer variety $SB(D).$  There is a full semi-exceptional collection
$(S_0, S_1,\dots S_n)$ on $SB(D)$ such that $S_0=\cO_{SB}$ and
$\End(S_i)\cong D^{\otimes i},$ where $n+1$ is the order of the class of $D$ in the Brauer group of $\kk.$
 Since each $D^{\otimes i}$ is a matrix algebra over a central division algebra $D_i,$ there is a w-exceptional collection
$(E_0, E_1,\dots, E_n)$ such that $\End(E_i)\cong D_i$ (see \cite{Be} for a proof).
In this situation $S_i$ are isomorphic to $E_i^{\oplus k_i}$ for some integers $k_i.$
These collections are also strong.
}
\end{example}

\section{Preliminaries on differential graded categories}

\subsection{Differential graded categories}

Our main references for differential graded (DG) categories are \cite{Ke,Dr,To, TV}.
Here we only recall some points and introduce notation.
Let $\kk$ be a field.
All categories, DG categories, functors, DG
functors and so on are assumed to be $\kk$\!--linear.

A {\it differential graded or DG category} is a $\kk$\!--linear category $\dA$ whose morphism spaces $\dHom (\mX, \mY)$
are complexes of $\kk$\!-vector spaces (DG $\kk$\!--modules), so that for any $\mX, \mY, \mZ\in
\Ob\dC$ the composition $\dHom (\mY, \mZ)\otimes \dHom (\mX, \mY)\to
\dHom (\mX, \mZ)$ is a morphism of DG $\kk$\!--modules. The identity morphism $1_\mX\in \dHom (\mX, \mX)$ is closed of
degree zero.

Using the supercommutativity isomorphism $\mU\otimes \mV\simeq \mV\otimes
\mU$ in the category of DG $\kk$\!--modules one defines for every DG
category $\dA$ the {\it opposite DG category} $\dA^{\op}$ with $\Ob\dA
^{\op}=\Ob\dA$ and $\dHom_{\dA^{\op}}(\mX, \mY)=\dHom_{\dA}(\mY, \mX).$

For a DG category $\dA$ we denote by $\Ho(\dA)$
its homotopy category.
The {\it homotopy category} $\Ho(\dA)$ has the same objects as the DG category $\dA$ and its
morphisms are defined by taking the $0$\!-th cohomology
$H^0(\dHom_{\dA} (\mX, \mY))$
of the complex $\dHom_{\dA} (\mX, \mY).$
%The {\it graded homotopy category}
%$\Hog(\dA)$ is defined  by replacing
%each $\dHom$ complex in $\dA$ by the direct sum of its cohomology groups.

As usual, a {\it DG functor}
$\mF:\dA\to\dB$ is given by a map $\mF:\Ob(\dA)\to\Ob(\dB)$ and
by morphisms of DG $\kk$\!--modules
$$
\mF_{\mX, \mY}: \dHom_{\dA}(\mX, \mY) \lto \dHom_{\dB}(\mF \mX,\mF \mY),\quad \mX, \mY\in\Ob(\dA)
$$
compatible with the composition and the units.

A DG functor $\mF: \dA\to\dB$ is called a {\it quasi-equivalence} if
$\mF_{\mX, \mY}$ is a quasi-isomorphism for all pairs of objects $\mX, \mY$ of $\dA$
and the induced functor $H^0(\mF): \Ho(\dA)\to \Ho(\dB)$ is an
equivalence. DG categories $\dA$ and $\dB$ are called {\it quasi-equivalent} if there exist DG
categories $\dC_1,\dots, \dC_n$ and a chain of quasi-equivalences
$\dA\stackrel{\sim}{\leftarrow} \dC_1 \stackrel{\sim}{\rightarrow} \cdots \stackrel{\sim}{\leftarrow} \dC_n
\stackrel{\sim}{\rightarrow} \dB.$

\subsection{Differential graded modules}

Given a small DG category $\dA$ we define a {\it right DG $\dA$\!--module} as a DG functor
$\mM: \dA^{op}\to \Mod \kk,$ where $\Mod \kk$ is the DG category of DG $\kk$\!--modules. We denote by $\Mod \dA$ the DG
category of right DG $\dA$\!--modules.

Each object $\mY$ of $\dA$ produces a right module
represented by $\mY$
\[
\mh^\mY(-):=\dHom_{\dA}(-, \mY)
\]
which is called a {\it representable} DG module. This
gives the Yoneda DG functor
$\mh^\bullet :\dA \to
\Mod\dA$ that is full and faithful.

A DG $\dA$\!--module is called {\it free} if it is isomorphic to a direct sum of  DG modules of the form
$\mh^\mY[n],$ where $\mY\in\dA,\; n\in\ZZ.$
A DG $\dA$\!--module
$\mP$ is called {\it semi-free} if it has a filtration
$0=\mPhi_0\subset \mPhi_1\subset ...=\mP$
such that each quotient  $\mPhi_{i+1}/\mPhi_i$ is free. The full
DG subcategory of semi-free DG modules is denoted by $\SF\dA.$
We denote by $\SFf\dA\subset \SF\dA$ the full DG subcategory of finitely generated semi-free
DG modules, i.e. such that $\mPhi_m=\mP$ for some $m$ and $\mPhi_{i+1}/\mPhi_i$ is a finite direct sum of DG modules of the form
$\mh^Y[n].$

For every DG
$\dA$\!--module $\mM$ there is a quasi-isomorphism $\bp \mM\to \mM$ such that $\bp M$ is a semi-free
DG $\dA$\!--module (see \cite{Ke} 3.1, \cite{Hi} 2.2, \cite{Dr} 13.2).

Denote by $\Ac\dA$ the full
DG subcategory of $\Mod\dA$ consisting of all acyclic DG modules, i.e. DG modules $\mM$
for which the complex $\mM(\mX)$ is acyclic for all $X\in\dA.$
The
homotopy category of DG modules $\Ho(\Mod\dA)$ has a natural structure of a triangulated category
and the homotopy subcategory of acyclic complexes $\Ho (\Ac\dA)$ forms a full triangulated subcategory in it.
The {\it derived
category} $\D(\dA)$ is defined as the Verdier quotient
\[
\D(\dA):=\Ho(\Mod\dA)/\Ho (\Ac\dA).
\]

It is also natural to consider the category of h-projective DG modules.
We call a  DG $\dA$\!--module $\mP$ {\it h-projective (homotopically projective)} if
$$\dHom_{\Ho(\Mod\dA)}(\mP, \mN)=0$$
for every acyclic DG module $\mN$ (dually, we can define {\it h-injective} DG modules).
Let  $\dP(\dA)\subset \Mod\dA$ denote the full
DG subcategory of h-projective objects. It can be easily checked that a semi-free
DG-module is h-projective and the natural embedding $\SF\dA\hookrightarrow\dP(\dA)$
is a quasi-equivalence.
Moreover, the canonical DG functors $\SF\dA\hookrightarrow\dP(\dA)\hookrightarrow\Mod\dA$ induce equivalences
$\Ho(\SF\dA)\stackrel{\sim}{\to} \Ho(\dP(\dA))\stackrel{\sim}{\to} \D(\dA)$ of triangulated categories.

Let $\mF:\dA \to \dB$ be a DG functor between small DG categories.
It induces the restriction DG functor
\[
\mF_*:\Mod\dB\lto \Mod\dA
\]
which sends a DG $\dB$\!--module $\mN$ to $\mN\circ\mF.$

The restriction functor $\mF_*$ has left and right adjoint functors $\mF^*, \mF^{!}$ that are defined as follows
\[
\mF^*M(Y)=\mM\otimes_{\dA} \mF_* \mh_Y,\quad \mF^{!}M(Y)=\dHom(\mF_*\mh_Y, \mM),\quad \text{where}\quad Y\in \dB \; \text{and}\; \mM\in\Mod\dA.
\]
The DG functor $\mF^*$ is called the induction functor and it is an extension of $\mF$ on the category of DG modules, i.e there is an isomorphism of DG functors
$\mF^* \mh^\bullet_{\dA}\cong \mh^\bullet_{\dB}\mF.$

The DG functor $\mF_*$ preserves acyclic DG modules and induces a derived functor $F_*: \D(\dB)\to \D(\dA).$
Existence of h-projective and h-injective resolutions allows us to define derived functors
$\bL F^*$ and $\bR F^!$ from $\D(\dA)$ to $\D(\dB).$

More generally, let $\mT$ be an
$\dA\hy\dB$\!--bimodule that is, by definition, a DG-module over $\dA^{op}\otimes\dB.$
For each DG $\dA$\!--module $\mM$ we obtain a DG $\dB$\!--module
$\mM\otimes_{\dA} \mT.$
The DG functor $(-)\otimes_{\dA} \mT: \Mod\dA \to \Mod\dB$ admits a right adjoint
$\dHom_{\dB} (\mT, -).$
These functors do not respect
quasi-isomorphisms in general, but they form a Quillen adjunction
and the derived functors
$(-)\stackrel{\bL}{\otimes}_{\dA}\mT$ and
$\bR \Hom_{\dB} (\mT, -)$
form an adjoint pair of functors between derived categories $\D(\dA)$ and
$\D(\dB).$

\subsection{Pretriangulated DG categories, categories of perfect DG modules, and enhancements}\label{enhancements}

For any DG category $\dA$ there exist a DG category $\dA^{\ptr}$ that is called the {\it pretriangulated hull}
and a canonical fully faithful DG
functor $\dA\hookrightarrow\dA^{\ptr}.$
The idea of the definition of $\dA^{\ptr}$ is to formally add to $\dA$
all shifts, all cones, cones of morphisms between cones and etc.
There is a canonical fully faithful DG functor
(the Yoneda embedding) $\dA^{\ptr}\to \Mod\dA,$ and under this embedding
$\dA^{\ptr}$ is DG-equivalent to the DG category of finitely generated semi-free DG modules $\SFf\dA.$
We will not make a difference between the DG categories $\dA^{\ptr}$ and $\SFf\dA.$

\begin{definition} A DG category $\dA$ is called {\em pretriangulated}  if the canonical DG functor $\dA\to\dA^{\ptr}$
is a quasi-equivalence.
\end{definition}
\begin{remark}{\rm
 It is equivalent to require that the homotopy category $\Ho(\dA)$ is triangulated
as a subcategory of $\Ho(\Mod\dA).$
}
\end{remark}
The DG category $\dA^{\ptr}$ is always  pretriangulated,
so $\Ho(\dA^{\ptr})$ is a triangulated category. We denote $\tr(\dA):=\Ho(\dA^{\ptr}).$
%Notice that a quasi-functor $\mF:\dA\to\dB$
%defines an exact functor $\tr(\dA)\to \tr(\dB)$ between triangulated categories.

With any small DG category $\dA$ we can also associate another DG category   $\prfdg\dA$ that is called the DG category of perfect DG modules.
 This category is even more important than $\dA^{\ptr}.$
\begin{definition} A DG category of perfect DG modules $\prfdg\dA$
is the full DG subcategory of $\SF\dA$ consisting of all DG modules which are homotopy
equivalent to a direct summand of a finitely generated semi-free DG module.
\end{definition}

Thus, the DG category $\prfdg\dA$ is pretriangulated and contains $\SFf\dA\cong \dA^{\ptr}.$ Denote by $\prf\dA$ the homotopy
category $\Ho(\prfdg\dA).$ The triangulated category $\prf\dA$ can be obtained from the triangulated category
$\tr(\dA)$ as its idempotent completion (Karubian envelope).

\begin{proposition}
For any  small DG category $\dA$ the set of representable objects  $\{\mh^Y\}_{Y\in \dA}$ forms a set of
compact generators of $\D(\dA)$ and the subcategory of compact objects $\D(\dA)^c$ coincides with the subcategory of perfect
DG modules $\prf\dA.$
\end{proposition}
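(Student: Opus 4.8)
The plan is to prove the two assertions --- that the representable modules $\{\mh^Y\}_{Y\in\dA}$ form a set of compact generators of $\D(\dA)$, and that $\D(\dA)^c = \prf\dA$ --- by reducing both to standard facts about compact objects in compactly generated triangulated categories, notably Proposition \ref{compact_gen} and Remark \ref{twodef}. First I would show that each $\mh^Y$ is a compact object of $\D(\dA)$. The key computation is the Yoneda-type isomorphism: for any DG $\dA$-module $\mM$ that is h-projective (so that maps in $\D(\dA)$ are computed as maps in $\Ho(\Mod\dA)$), one has $\dHom_{\D(\dA)}(\mh^Y, \mM) \cong H^0(\mM(Y))$, functorially in $\mM$. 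Since $H^0$ and evaluation at $Y$ both commute with arbitrary direct sums of DG modules, and since a direct sum of h-projectives is h-projective (a direct sum of semi-free modules is semi-free), it follows that $\dHom_{\D(\dA)}(\mh^Y, -)$ commutes with small coproducts, i.e. $\mh^Y$ is compact.

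Next I would establish that $\{\mh^Y\}_{Y\in\dA}$ generates $\D(\dA)$ in the sense of Definition \ref{van}. Suppose $\mN\in\D(\dA)$ satisfies $\dHom_{\D(\dA)}(\mh^Y, \mN[n]) = 0$ for all $Y\in\dA$ and all $n\in\ZZ$. Replacing $\mN$ by a semi-free (hence h-injective-testing-friendly; really one wants h-projective or just uses that maps are computed on the semi-free replacement) resolution and using the isomorphism above, this says $H^n(\mN(Y)) = 0$ for all $Y$ and all $n$, i.e. the complex $\mN(Y)$ is acyclic for every $Y\in\dA$; that is precisely the condition that $\mN$ is an acyclic DG module, so $\mN = 0$ in $\D(\dA)$. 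Hence the representables are a set of compact generators.

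For the second assertion, I would invoke Remark \ref{twodef}: since $\D(\dA)$ is cocomplete and compactly generated by the set $S = \{\mh^Y\}$ of compact objects, $S$ is automatically a set of classical generators of $\D(\dA)^c$, and moreover $\D(\dA)^c$ is the idempotent completion of the smallest triangulated subcategory containing $S$. On the other hand, by construction $\prfdg\dA$ consists of DG modules homotopy equivalent to direct summands of finitely generated semi-free modules, and $\SFf\dA \cong \dA^{\ptr}$ is exactly the smallest full pretriangulated subcategory of $\SF\dA$ containing the representables; so $\prf\dA = \Ho(\prfdg\dA)$ is the idempotent completion of $\tr(\dA) = \Ho(\dA^{\ptr})$, which is the triangulated subcategory of $\D(\dA)$ classically generated by $S$. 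Comparing the two descriptions gives $\prf\dA = \D(\dA)^c$ inside $\D(\dA)$. The one point requiring a little care --- and the main potential obstacle --- is the containment $\prf\dA \subseteq \D(\dA)^c$, i.e. that a direct summand in the homotopy category of a finitely generated semi-free module is again compact: this follows because compact objects are closed under finite direct sums, shifts, cones, and direct summands (a direct summand $Z$ of a compact $W$ has $\dHom(Z,-)$ a direct summand of the coproduct-preserving functor $\dHom(W,-)$), so $\D(\dA)^c$ is a thick subcategory containing the representables, hence contains all of $\prf\dA$. The reverse containment $\D(\dA)^c \subseteq \prf\dA$ is exactly the content of Remark \ref{twodef} applied here, using that $\D(\dA)$ is compactly generated by $S \subseteq \prf\dA$.
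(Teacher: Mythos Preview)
The paper does not actually supply a proof of this proposition; it is stated without argument in the preliminaries as a well-known fact (essentially due to Keller [Ke]). Your argument is correct and is the standard one: the Yoneda isomorphism $\Hom_{\D(\dA)}(\mh^Y,\mM)\cong H^0(\mM(Y))$ immediately yields both compactness of each $\mh^Y$ and the generating condition of Definition~\ref{van}, and the identification $\D(\dA)^c=\prf\dA$ then follows from Neeman's result recorded in Remark~\ref{twodef} together with the definition of $\prf\dA$ as the thick closure of the representables inside $\D(\dA)$.

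One small simplification: you do not need $\mM$ to be h-projective for the Yoneda computation. Since $\mh^Y$ is itself representable and hence semi-free, it is already h-projective, so $\Hom_{\D(\dA)}(\mh^Y,-)$ agrees with $\Hom_{\Ho(\Mod\dA)}(\mh^Y,-)$ on arbitrary targets. This removes the slightly awkward aside about replacing $\mN$ by a resolution in your generation step.
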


\begin{remark}\label{perfect}
{\rm
If $\dA$ is a small pretriangulated DG category and $\Ho(\dA)$ is idempotent complete, then
the natural Yoneda DG functor $\mh: \dA\to\prfdg\dA$ is a quasi-equivalence.
}
\end{remark}
It is well-known that the categories $\D(\dA)$ and $\prf\dA$ are invariant under quasi-equivalences of DG categories.

\begin{proposition}\label{pre-tr_equivalence}
If a DG functor $\mF: \dA\to\dB$ is a quasi-equivalence, then the functors
\[
\mF^{*}: \SFf\dA\lto\SFf\dB,\quad \mF^{*}:\prfdg\dA\lto \prfdg\dB, \quad \mF^*:\SF\dA\lto \SF\dB
\]
are quasi-equivalences too.
\end{proposition}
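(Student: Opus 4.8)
The plan is to reduce all three statements to a single underlying fact: a quasi-equivalence $\mF\colon\dA\to\dB$ induces, via the derived induction functor $\bL\mF^*$, an equivalence of derived categories $\D(\dA)\xrightarrow{\sim}\D(\dB)$, and that this equivalence restricts to the relevant full DG subcategories. First I would recall that $\mF$ being a quasi-equivalence means each $\mF_{\mX,\mY}$ is a quasi-isomorphism and $H^0(\mF)$ is an equivalence, so in particular $\mF$ is essentially surjective up to homotopy. Since $\mF^*$ is an extension of $\mF$, we have the isomorphism of DG functors $\mF^*\mh^\bullet_\dA\cong\mh^\bullet_\dB\mF$; applying $H^0$ and using that $\mF_*$ preserves acyclics (hence $\mF^*$ computes $\bL\mF^*$ on semi-free, in particular on representable, modules), the induced functor $F_*\colon\D(\dB)\to\D(\dA)$ sends the compact generators $\{\mh^Y\}_{Y\in\dB}$ to the modules $\mh^{\mF'Y'}$ for suitable preimages, and on $\Hom$-complexes between representables it is, up to the quasi-isomorphisms $\mF_{\mX,\mY}$, the identity. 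Thus $F_*$ is fully faithful on the subcategory generated by representables and sends a generating set to a generating set, so $F_*$ is an equivalence $\D(\dB)\xrightarrow{\sim}\D(\dA)$, with quasi-inverse $\bL\mF^*$.

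Next I would upgrade this to the DG level. The point is that $\mF^*$ preserves the defining combinatorial structure of each of the three subcategories: it sends a free module $\mh^\mY[n]$ to $\mh^{\mF\mY}[n]$, hence sends a semi-free filtration to a semi-free filtration, so $\mF^*$ maps $\SF\dA$ into $\SF\dB$ and $\SFf\dA$ into $\SFf\dB$; and since it is an additive DG functor commuting with shifts and preserving the homotopy relation, it sends homotopy-direct-summands of finitely generated semi-free modules to the same, i.e.\ maps $\prfdg\dA$ into $\prfdg\dB$. It remains to check that on each of these subcategories $\mF^*$ is a quasi-equivalence. Full faithfulness on $\Hom$-complexes: for representables this is exactly the map $\mF_{\mX,\mY}$, a quasi-isomorphism by hypothesis; for general objects of $\SF\dA$ one filters and passes to the (homotopy) colimit of the filtration, using that $\mF^*$ commutes with the colimits defining semi-free modules and with finite (resp.\ arbitrary) direct sums, and that quasi-isomorphism of $\Hom$-complexes is preserved under the relevant limits of towers of these. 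Essential surjectivity on homotopy categories: this is where I invoke the derived equivalence from the first paragraph together with the characterization of each subcategory in homotopy-categorical terms, $\Ho(\SFf\dA)\cong\tr(\dA)$, $\Ho(\prfdg\dA)\cong\prf\dA=\D(\dA)^c$ the subcategory of compact objects, and $\Ho(\SF\dA)\cong\D(\dA)$; since $F_*$ (equivalently $\bL\mF^*=H^0(\mF^*)$ on h-projectives) is an equivalence $\D(\dA)\xrightarrow{\sim}\D(\dB)$ carrying compact objects to compact objects and the subcategory built from representables by shifts, cones and summands to the corresponding one, it restricts to equivalences of all three homotopy subcategories, which is exactly the essential-surjectivity half of "quasi-equivalence" for $\mF^*$ on $\SFf$, $\prfdg$, and $\SF$ respectively.

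The main obstacle I expect is the careful bookkeeping in the semi-free and h-projective setting: one must verify that $\mF^*$ genuinely computes the derived functor $\bL\mF^*$ on the modules in question (true because semi-free modules are h-projective, so no cofibrant replacement is needed), and that full faithfulness on $\Hom$-complexes propagates from representables through transfinite semi-free filtrations — this requires a small-object / filtered-colimit argument showing the comparison map $\dHom_\dA(\mM,\mN)\to\dHom_\dB(\mF^*\mM,\mF^*\mN)$ is a quasi-isomorphism for all semi-free $\mM,\mN$, proceeding by induction on the filtration on $\mM$ using the five lemma on the triangles $\dHom(\mh^{Y}[n],\mN)\to\dHom(\mPhi_{i+1},\mN)\to\dHom(\mPhi_i,\mN)$ and compatibility of $\mF^*$ with these triangles, then passing to the limit. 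Once full faithfulness is established on $\SF\dA$, it is automatic on the full subcategories $\SFf\dA$ and $\prfdg\dA$, and combined with the essential surjectivity statements above, all three assertions follow. I would present the $\SF$ case in full and note that the $\SFf$ and $\prfdg$ cases are immediate restrictions.
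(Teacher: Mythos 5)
The paper does not actually prove Proposition \ref{pre-tr_equivalence}: it is stated as a well-known invariance fact, implicitly deferred to Keller's \emph{Deriving DG categories}. So there is no ``paper's proof'' to match against; what you have written is the standard argument, and it is correct in outline. The two real inputs are exactly the ones you isolate: (a) on representables $\mF^*$ acts on $\dHom$-complexes by the quasi-isomorphisms $\mF_{\mX,\mY}$, and this propagates along semi-free filtrations by the five lemma plus a Milnor/$\lim^1$ argument for the transfinite case (the tower $\dHom(\mPhi_{i+1},\mN)\to\dHom(\mPhi_i,\mN)$ consists of surjections, so the inverse limit behaves); (b) essential surjectivity of $H^0(\mF)$ gives essential surjectivity of $H^0(\mF^*)$ on $\Ho(\SFf)$, and hence on $\prf$ and on $\D$ after closing under summands, resp.\ coproducts. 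One spot where your bookkeeping is slightly off and deserves a line in a full write-up: in the base case of the full-faithfulness induction you need $\dHom_{\dB}(\mh^{\mF Y},\mF^*\mN)=(\mF^*\mN)(\mF Y)$ to be quasi-isomorphic to $\mN(Y)=\dHom_{\dA}(\mh^Y,\mN)$, and this identification is not a tautology --- it is itself proved by a second induction on the semi-free filtration of $\mN$ (trivial for $\mN=\mh^X$, where it is again $\mF_{Y,X}$, then pass to cones and direct colimits, which $\mF^*$ and evaluation both preserve). Likewise your description of $F_*$ sending $\mh^{Y'}$ ``to $\mh^{\mF'Y'}$'' should read: the unit $\mh^X\to\mF_*\mh^{\mF X}$ is a quasi-isomorphism, whence $F_*$ is fully faithful on the image of the representables and an equivalence by the generation argument. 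With those two clarifications the proof is complete, and the reduction of the $\SFf$ and $\prfdg$ cases to the $\SF$ case is exactly as you say.
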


Furthermore, we have the
following proposition  that is essentially equal to
Lemma 4.2 in \cite{Ke} (see also  \cite[Prop. 1.15]{LO} and proof there).

\begin{proposition}\cite{Ke}\label{Keller2}
Let $\mF: \dA\hookrightarrow \dB$ be a full embedding of DG categories
and let $\mF^{*}:\SF\dA\to\SF\dB$ (resp. $\mF^*:\prfdg\dA\to\prfdg\dB$) be the extension DG functor. Then the induced
homotopy functor $F^*: \D(\dA)\to \D(\dB)$ (resp. $F^*: \prf\dA\to \prf\dB$) is fully faithful.
If, in addition, the category $\prf\dB$ is classically generated by $\Ob\dA,$ then
$F^*$ is an equivalence.
\end{proposition}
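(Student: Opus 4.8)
The plan is to prove Proposition \ref{Keller2} by reducing everything to a statement about representable modules and the Yoneda functor, then invoking the standard compact-generation machinery.

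First I would recall the key formula: for a DG functor $\mF\colon\dA\to\dB$ the induction functor satisfies $\mF^*\mh^\bullet_{\dA}\cong\mh^\bullet_{\dB}\mF$, so on representable modules $\mF^*$ is nothing but $\mF$ itself composed with Yoneda. Since $\mF$ is a full embedding, this already shows that $\mF^*$ restricted to representables is fully faithful on the level of morphism complexes, and hence so is the induced functor on $\SFf\dA\cong\dA^{\ptr}$ after passing to $H^0$. The content to be proven is that full faithfulness persists after passing to semi-free (resp. perfect) modules, where infinite (resp. idempotent) completions are involved.

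The main step is to show $F^*\colon\D(\dA)\to\D(\dB)$ is fully faithful. I would argue as follows. It suffices to check that the unit of the adjunction $(\bL\mF^*,\mF_*)$ is an isomorphism on a set of compact generators of $\D(\dA)$ — and by the Proposition just before this one, the representables $\{\mh^Y\}_{Y\in\dA}$ form such a set. So I must check that for all $Y,Y'\in\dA$ and all $n\in\ZZ$ the map $\Hom_{\D(\dA)}(\mh^Y,\mh^{Y'}[n])\to\Hom_{\D(\dB)}(\mF^*\mh^Y,\mF^*\mh^{Y'}[n])$ is an isomorphism. The left side is $H^n(\dHom_{\dA}(Y,Y'))$ by Yoneda, the right side is $H^n(\dHom_{\dB}(\mF Y,\mF Y'))$ by Yoneda again together with the formula above, and these agree because $\mF$ is a full embedding, inducing a quasi-isomorphism on these particular morphism complexes. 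Since both $\D(\dA)$ and $\D(\dB)$ are compactly generated and $\mF^*$ preserves compactness and arbitrary coproducts, full faithfulness on a compact generator propagates: the full subcategory of objects $M\in\D(\dA)$ for which the unit $M\to\mF_*\mF^*M$ is an isomorphism is triangulated and closed under coproducts, hence is all of $\D(\dA)$. For the perfect version one restricts to compact objects: $\mF^*$ sends $\prf\dA=\D(\dA)^c$ into $\prf\dB=\D(\dB)^c$, and full faithfulness is inherited.

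For the final clause, suppose in addition that $\prf\dB$ is classically generated by $\Ob\dA$ (more precisely by the images of representables $\mh^{\mF Y}$). Then the essential image of $F^*\colon\prf\dA\to\prf\dB$ is a strictly full triangulated subcategory, closed under direct summands because $\prf\dA$ is idempotent complete and $F^*$ is fully faithful; it contains all $\mh^{\mF Y}$ since $\mF^*\mh^Y\cong\mh^{\mF Y}$. By hypothesis these classically generate $\prf\dB$, so the essential image is all of $\prf\dB$, and $F^*$ is an equivalence. I expect the main obstacle to be the bookkeeping in the compact-generation argument — specifically verifying carefully that $\mF^*$ commutes with coproducts and preserves compact objects so that the "dévissage from a compact generator" step is legitimate — rather than any conceptual difficulty; everything else is Yoneda and the adjunction formulas already recorded in the excerpt.
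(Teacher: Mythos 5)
Your proof is correct and follows the standard route: reduce full faithfulness of $F^*$ to the unit of the adjunction $(F^*,\mF_*)$ being an isomorphism, verify this on representables via Yoneda and the hypothesis that $\mF$ is a full embedding, propagate by dévissage using that both $F^*$ (left adjoint) and $\mF_*$ (pointwise restriction) preserve coproducts, and then observe for the last clause that the essential image is a thick triangulated subcategory containing the $\mh^{\mF Y}$. The paper itself does not spell out a proof, referring instead to Keller's Lemma 4.2 and \cite[Prop.~1.15]{LO}, and your argument is essentially the one given in those references.
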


\begin{remark}\label{Keller2_fg}
{\rm
The first statement holds for the functor
$\mF^*:\SFf\dA\to\SFf\dB$ too. The second also holds if we ask that the category
$\tr(\dB)$ is generated by $\Ob\dA$ (not classically).
}
\end{remark}

\begin{definition} Let $\T$ be a triangulated category. An {\em
enhancement} of $\T$ is a pair $(\dA , \varepsilon),$ where $\dA$ is a
pretriangulated DG category and $\varepsilon:\Ho(\dA)\stackrel{\sim}{\to} \T$ is an exact equivalence.
\end{definition}

\subsection{Quasi-functors}

Let $\kk$ be a field. Denote by
$\DGcat_k$ the category of small DG $\kk$\!--linear categories.
It is known  that it admits a structure of cofibrantly
generated model category whose weak equivalences are the
quasi-equivalences (see \cite{Ta}).
This implies that the localization $\Hqe$ of
$\DGcat_k$ with respect to the quasi-equivalences has small
$\Hom$\!-sets. This also gives that a morphism from $\dA$ to $\dB$
in the localization can be represented as $\dA\leftarrow \dA_{cof}\to\dB,$ where $\dA\leftarrow \dA_{cof}$ is a cofibrant replacement.
 It is not easy to compute  the morphism sets in the localization
category $\Hqe$ using a cofibrant replacement. On the other hand, they can be
described in term of quasi-functors.

Consider two  small DG categories $\dA$ and $\dB.$
Let $\mT$ be a $\dA\hy\dB$\!--bimodule. It defines a derived tensor functor
\[
(-)\stackrel{\bL}{\otimes}_\dA \mT: \D(\dA) \lto \D(\dB)
\]
between derived categories of  DG modules over $\dA$ and $\dB.$
\begin{definition}
An $\dA\hy\dB$\!--bimodule $\mT$ is called a {\em quasi-functor}
from $\dA$ to $\dB$ if the tensor functor
$
(-)\stackrel{\bL}{\otimes}_\dA \mT: \D(\dA) \to \D(\dB)
$
takes every representable $\dA$\!--module to an object which is
isomorphic to a representable $\dB$\!--module.
\end{definition}
Denote by  $\Rep(\dA,\; \dB)$ the full subcategory of the derived
category $\D(\dA^{op}\otimes\dB)$ of $\dA\hy\dB$\!--bimodules consisting of
all quasi-functors.
In
other words a quasi-functor is represented by a DG functor $\dA\to \Mod\dB$ whose essential image consists of
quasi-representable DG $\dB$\!--modules (``quasi-representable'' means quasi-isomorphic to a representable DG module).
Since the category of quasi-representable DG
$\dB$\!--modules is equivalent to $\Ho(\dB)$ a quasi-functor
$\mT \in \Rep(\dA,\; \dB)$ defines a functor $\Ho(\mT):\Ho(\dA)\to \Ho(\dB).$
Notice that a quasi-functor $\mF:\dA\to\dB$
defines an exact functor $\tr(\dA)\to \tr(\dB)$ between triangulated categories.

It is now known that quasi-representable functors form morphisms between DG categories in the localization
category
$\Hqe.$
\begin{theorem}\label{quasi-functors}\cite{To}  The morphisms from $\dA$ to $\dB$
in the localization $\Hqe$ of $\DGcat_{\kk}$ with respect to
quasi-equivalences are in natural bijection with the isomorphism
classes of $\Rep(\dA,\dB).$
\end{theorem}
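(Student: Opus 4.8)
The plan is to deduce the statement, following To\"en, from the fact that $\Hqe$ carries a closed symmetric monoidal structure whose internal $\Hom$ is precisely the DG category of quasi-functors. First I would invoke the cofibrantly generated model structure on $\DGcat_\kk$ whose weak equivalences are the quasi-equivalences (cited above), so that $\Hqe=\Ho(\DGcat_\kk)$ has small $\Hom$-sets, every DG category is fibrant, and $\Hom_{\Hqe}(\dA,\dB)$ is the set of homotopy classes of DG functors $\dA_{\mathrm{cof}}\to\dB$ out of a cofibrant replacement. The monoidal unit is the one-object DG category $\uno$ with $\dHom(\ast,\ast)=\kk$, which is cofibrant; although $(\DGcat_\kk,\otimes_\kk)$ is not a monoidal model category, the assignment $\dA\otimes^{\bL}\dB:=\dA_{\mathrm{cof}}\otimes_\kk\dB$ descends to a symmetric monoidal structure on $\Hqe$ with unit $\uno$.

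The core of the argument is the construction of an internal $\Hom$ for this monoidal structure and its identification with quasi-functors. For DG categories $\dA$ and $\dB$ I would work in the projective model category of $\dA$-$\dB$-bimodules, i.e.\ DG modules over $\dA^{op}\otimes^{\bL}\dB$, whose homotopy category is $\D(\dA^{op}\otimes\dB)$; inside the corresponding DG category of cofibrant--fibrant bimodules sits the full DG subcategory $\dRep(\dA,\dB)$ spanned by those bimodules $\mM$ with $\mM(x,-)$ quasi-representable for every $x\in\Ob\dA$, and by construction $\Ho(\dRep(\dA,\dB))$ is the category $\Rep(\dA,\dB)$ of the statement. I would then prove the natural bijection
\[
\Hom_{\Hqe}(\dC\otimes^{\bL}\dA,\ \dB)\ \cong\ \Hom_{\Hqe}(\dC,\ \dRep(\dA,\dB))
\]
for all $\dC$, exhibiting $\dRep(\dA,\dB)$ as $\bR\underline{\Hom}(\dA,\dB)$. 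This is where the real work lies: one first handles the case in which $\dA$ has a single object, where the adjunction reduces to the ordinary derived tensor--Hom adjunction between module categories together with a direct computation of maps out of the unit $\uno$, and then bootstraps to arbitrary $\dA$ using that $\dRep(-,\dB)$ converts homotopy colimits in the first variable into homotopy limits and that $\dA$ is a homotopy colimit of its full DG subcategories on finite object sets. Keeping careful track of cofibrancy and fibrancy of bimodules throughout is the principal obstacle; once the closed monoidal structure and this identification are in place, the rest is formal.

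Finally I would conclude as follows. Setting $\dC=\uno$ in the displayed adjunction gives
\[
\Hom_{\Hqe}(\dA,\dB)=\Hom_{\Hqe}(\uno\otimes^{\bL}\dA,\dB)\cong\Hom_{\Hqe}(\uno,\ \dRep(\dA,\dB)).
\]
It then remains to observe that for any DG category $\dD$ the set $\Hom_{\Hqe}(\uno,\dD)$ is naturally the set of isomorphism classes of objects of $\Ho(\dD)$: since $\uno$ is cofibrant and $\dD$ is fibrant, a morphism $\uno\to\dD$ in $\Hqe$ is a homotopy class of DG functors $\uno\to\dD$; such a DG functor is just the choice of an object of $\dD$; and two such functors are homotopic exactly when the chosen objects become isomorphic in $\Ho(\dD)$, so that $\pi_0$ of the mapping space $\mathrm{Map}_{\DGcat}(\uno,\dD)$ is the set of isomorphism classes of objects of $\Ho(\dD)$. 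Applying this with $\dD=\dRep(\dA,\dB)$ and recalling $\Ho(\dRep(\dA,\dB))=\Rep(\dA,\dB)$ yields the asserted natural bijection between $\Hom_{\Hqe}(\dA,\dB)$ and the isomorphism classes of quasi-functors in $\Rep(\dA,\dB)$; naturality in $\dA$ and $\dB$ is inherited from that of the monoidal adjunction.
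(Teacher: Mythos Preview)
The paper does not supply a proof of this theorem: it is quoted from \cite{To} and used as a black box, with no argument given in the text. So there is nothing in the paper to compare your proposal against.

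That said, your sketch is a faithful outline of To\"en's original proof: establish the closed symmetric monoidal structure on $\Hqe$, identify the internal $\Hom$ with the DG category of right quasi-representable bimodules, and then specialize to $\dC=\uno$ to read off $\Hom_{\Hqe}(\dA,\dB)$ as isomorphism classes in $\Rep(\dA,\dB)$. The one place where you are a bit glib is the ``bootstrap'' from one-object $\dA$ to general $\dA$: in To\"en's paper this is not done by writing $\dA$ as a homotopy colimit of finite subcategories, but rather by a direct comparison of mapping spaces using the explicit description of fibrations and the path-object/right-homotopy machinery in the Tabuada model structure. Your colimit heuristic is plausible but would need justification that $\dRep(-,\dB)$ indeed takes the relevant homotopy colimits to homotopy limits in $\Hqe$, which is not entirely formal. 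If you want a self-contained argument, it is safer to follow To\"en's computation of $\mathrm{Map}_{\DGcat}(\dA,\dB)$ via the simplicial/path-object description and then identify its $\pi_0$ directly.
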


Due to this theorem any morphism from $\dA$ to $\dB$ in the localization category $\Hqe$
will be called a quasi-functor.

Let $\mF: \dA\to\dB$ be a quasi-functor. It can be realized as
a roof  $\dA\stackrel{\ma}{\stackrel{\sim}{\longleftarrow}}\dA'\stackrel{\mF'}{\lto}\dB,$
where $\ma$  and $\mF'$ are DG functors and $\ma$ is also a quasi-equivalence.
For instance we can take a cofibrant replacement $\dA_{cof}$ as  $\dA'.$
The quasi-functor $\mF$ induces functors
\begin{equation}\label{derived_quasi}
\bL F^*= \bL F^{'*}\circ a_*: \D(\dA)\lto \D(\dB)
\quad
\text{and}
\quad
\bR F_*:= F'_* \circ\bL a^{*}: \D(\dB)\lto \D(\dA).
\end{equation}
If now we consider the quasi-functor $F$ as an $\dA\hy\dB$\!--bimodule $\mT,$ then
there are isomorphisms of functors
\[
\bL F^*\cong -\stackrel{\bL}{\otimes}_\dA \mT: \D(\dA) \lto \D(\dB)
\quad
\text{and}
\quad
\bR F_*\cong \bR\Hom_{\dB}(\mT, -): \D(\dB) \lto \D(\dA).
\]

The standard tensor product $\otimes$ on the category $\DGcat_{\kk}$ induces a tensor product
$\stackrel{\bL}{\otimes}$ on the localization $\Hqe.$
It is proved in \cite{To} that the monoidal category $(\Hqe, \stackrel{\bL}{\otimes})$ has internal Hom-functor
$\dR\lHom.$ In particular, there is a  quasi-equivalence
\begin{equation}\label{RHom}
\dR\lHom(\dA\otimes\dB,\; \dC)\cong \dR\lHom(\dA, \; \dR\lHom(\dB,\; \dC)).
\end{equation}
\begin{theorem}\cite{To}
For any DG categories $\dA$ and $\dB$ the DG category
$\dR\lHom(\dA,  \dB)$ is quasi-equivalent to the full DG subcategory $\dRep(\dA, \dB)\subset \SF(\dA^{\op}\otimes\dB)$
consisting of all objects of $\Rep(\dA, \dB).$
\end{theorem}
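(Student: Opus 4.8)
The plan is to reduce the claimed quasi-equivalence $\dR\lHom(\dA,\dB)\simeq \dRep(\dA,\dB)$ to the already-available identification of $\Hom$-sets in $\Hqe$ (Theorem \ref{quasi-functors}) together with the enriched structure coming from the internal $\Hom$-functor on $(\Hqe,\stackrel{\bL}{\otimes})$. Concretely, one first observes that $\dRep(\dA,\dB)$ is a full DG subcategory of the pretriangulated (indeed cocomplete-up-to-perfection) DG category $\SF(\dA^{\op}\otimes\dB)$, and hence is itself a DG category whose objects are exactly the quasi-functors and whose homotopy category is $\Rep(\dA,\dB)$. The strategy is then to produce a natural morphism in $\Hqe$ from $\dR\lHom(\dA,\dB)$ to $\dRep(\dA,\dB)$ and to check it is a quasi-equivalence by verifying (i) it is essentially surjective on homotopy categories and (ii) it induces quasi-isomorphisms on morphism complexes.

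First I would construct the comparison quasi-functor. By the defining adjunction \eqref{RHom} for the internal Hom, morphisms $\dU\to \dR\lHom(\dA,\dB)$ in $\Hqe$ are the same as morphisms $\dU\otimes\dA\to\dB$, i.e. (by Theorem \ref{quasi-functors}) isomorphism classes of objects of $\Rep(\dU\otimes\dA,\dB)$, equivalently of $\Rep(\dU,\dRep(\dA,\dB))$ once one knows the target has the correct objects; applying this with $\dU=\dR\lHom(\dA,\dB)$ to the identity gives a canonical quasi-functor $\Phi\colon\dR\lHom(\dA,\dB)\to\dRep(\dA,\dB)$ (alternatively, realize $\dR\lHom(\dA,\dB)$ via a cofibrant model $\dA_{cof}$ and send a DG functor $\dA_{cof}\to\Mod\dB$ to its image DG module, after replacing objects by quasi-representable ones). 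On homotopy categories, $H^0(\Phi)$ sends a morphism $\dA\to\dB$ in $\Hqe$, presented as a roof, to the associated bimodule; by Theorem \ref{quasi-functors} this is a bijection on isomorphism classes, so $H^0(\Phi)$ is essentially surjective and injective on objects up to isomorphism, and one checks it is full and faithful on $\Hom$-sets by the same token (the $\Hom$-set $[\dA,\dB]_{\Hqe}$ is computed both ways and they agree naturally). This gives that $H^0(\Phi)$ is an equivalence.

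The remaining, and main, point is the DG-enhancement statement: that $\Phi$ is a quasi-equivalence, not merely an equivalence on $H^0$. For this I would use the universal property of $\dR\lHom$ once more but applied to a family of test categories. Specifically, for each $n$ the $n$-th cohomology $H^n\dHom_{\dR\lHom(\dA,\dB)}(F,G)$ can be read off from $[\,\kk[n]\hy\text{shifted test category},\,\dR\lHom(\dA,\dB)\,]$-type computations, or more cleanly, one invokes that both sides represent the same functor $\dU\mapsto \Rep(\dU\otimes\dA,\dB)$ on $\Hqe$ — the left side by \eqref{RHom} and the right side because a quasi-functor $\dU\to\dRep(\dA,\dB)$ is the same as a $\dU\hy(\dA^{\op}\otimes\dB)$-bimodule that is objectwise a quasi-functor, which by a straightforward unwinding (using that $\dRep(\dA,\dB)\subset\SF(\dA^{\op}\otimes\dB)$ and the associativity of derived tensor products) is the same as an object of $\Rep(\dU\otimes\dA,\dB)$. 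Two objects of $\Hqe$ that corepresent the same functor (compatibly with the comparison map $\Phi$) are quasi-equivalent, by the Yoneda lemma in the localization $\Hqe$.

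The hard part will be making the last step rigorous: one must check that the identification $\Rep(\dU\otimes\dA,\dB)\cong\Rep(\dU,\dRep(\dA,\dB))$ is natural in $\dU$ as a \emph{functor} to groupoids (or better, is refined to an equivalence of the relevant mapping DG categories), and that the comparison map $\Phi$ constructed above is exactly the one inducing this natural isomorphism. The technical friction is entirely about tracking coherence through the derived tensor/internal-Hom adjunction and through the passage from bimodules to DG functors valued in quasi-representable modules; the cofibrancy hypotheses on $\dA$ (replacing $\dA$ by $\dA_{cof}$ so that $\dA^{\op}\otimes\dB$ computes the derived tensor correctly, cf.\ the discussion of cofibrant replacements in $\DGcat_\kk$) are what keep all these identifications strict enough. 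Once that bookkeeping is done, the quasi-equivalence $\dR\lHom(\dA,\dB)\simeq\dRep(\dA,\dB)$ follows; this is essentially To\"en's theorem and I would ultimately cite \cite{To} for the coherence details rather than reproduce them.
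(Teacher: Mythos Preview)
The paper does not prove this theorem at all: it is stated with the citation \cite{To} and no argument is given, since it is quoted purely as background from To\"en's work. So there is no ``paper's own proof'' to compare against; the paper's approach is simply to invoke \cite{To}.

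Your sketch is in the right spirit---using the adjunction \eqref{RHom} and a Yoneda-in-$\Hqe$ argument is essentially how To\"en's result is organized---and you are honest that the coherence bookkeeping is where the real content lies and must be taken from \cite{To}. One point to tighten: your step (ii), where you say that $H^0(\Phi)$ is full and faithful ``by the same token'' as Theorem~\ref{quasi-functors}, does not follow. Theorem~\ref{quasi-functors} identifies \emph{isomorphism classes of objects} of $\Rep(\dA,\dB)$ with the set $[\dA,\dB]_{\Hqe}$; it says nothing directly about morphisms between two fixed objects of $\Ho(\dR\lHom(\dA,\dB))$ or of $\Rep(\dA,\dB)$. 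The clean route is the one you give afterwards: show that both $\dR\lHom(\dA,\dB)$ and $\dRep(\dA,\dB)$ corepresent the functor $\dU\mapsto [\dU\stackrel{\bL}{\otimes}\dA,\dB]_{\Hqe}$ on $\Hqe$, and conclude by Yoneda that they are isomorphic in $\Hqe$, i.e.\ quasi-equivalent. The intermediate claim about $H^0(\Phi)$ being an equivalence is then a consequence rather than an independent step, and you should drop the appeal to Theorem~\ref{quasi-functors} for full faithfulness. With that correction, your outline is an accurate summary of To\"en's argument, and since the paper itself just cites \cite{To}, citing it is exactly what is expected here.
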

Thus, there are equivalences
$
\Ho(\dR\lHom(\dA,  \dB))\cong\Ho(\dRep(\dA, \dB))\cong \Rep(\dA, \dB).
$

\section{Commutative and noncommutative  schemes}

\subsection{Derived categories of quasi-coherent sheaves and noncommutative schemes}\label{3.1}
In this paper we will consider separated noetherian schemes over an arbitrary field $\kk.$
Let $X$ be such a scheme.
The abelian category $\Qcoh X$ of quasi-coherent sheaves
$\Qcoh X$ is a Grothendieck category and has enough injectives.

Denote by $\dCom X$ the DG category of unbounded complexes of quasi-coherent sheaves on $X.$
This category has enough h-injective complexes (see, e.g. \cite{KSh}). Denote by $\dI(X)$ the full DG subcategory
of h-injective complexes. This DG category gives us a natural DG enhancement for the unbounded derived category
of quasi-coherent sheaves, because $\Ho(\dI(X))\cong \D(\Qcoh X).$ Another natural
 enhancement for $\D(\Qcoh X)$ comes from the definition of the derived category and DG version
 of Verdier localization \cite{Dr}. Consider the full DG subcategory $\dAc X\subset\dCom X$ of all acyclic complexes.
 We can take the quotient DG derived category
 $
 \dCom X/\dAc X.
 $
Of course, $\dI(X)$ and $\dCom X/\dAc X$ are naturally quasi-equivalent enhancements.

There is another enhancement of $\D(\Qcoh X)$ that is very useful when we work with pullback and tensor product functors.
It comes from h-flat complexes. Recall that an (unbounded) complex $\mP^{\cdot}$ of quasi-coherent sheaves on $X$
is called {\em h-flat} if $\Tot^{\oplus}(\mP^{\cdot}\otimes_{\cO_X} \mC^{\cdot})$ is acyclic
for any acyclic $\mC^{\cdot} \in\dAc X.$ Denote by $\dHf X\subset\dCom X$ the full DG subcategory of h-flat complexes.
It was shown in \cite[Prop. 1.1]{AJL} that there are enough h-flat complexes in $\dCom X$ for any separated quasi-compact scheme.
Hence the DG quotient category $\dHf X/\dHAc X,$ where $\dHAc X$ is the DG subcategory of acyclic h-flat complexes, is an enhancement of $\D(\Qcoh X)$ (see \cite[3.10]{KL}).

It is easy to see that for any morphism of schemes $f:X\to Y$ the pullback
$f^*,$ acting componentwise on complexes,  sends h-flat complexes to h-flat complexes and h-flat acyclic
complexes to h-flat acyclic complexes. It is also true that the tensor product of an h-flat acyclic complex with
any complex is acyclic (see \cite{Sp}).
Thus, for any morphism of schemes $f: X\to Y$ we obtain a DG functor (not only a quasi-functor)
\[
\mf^*: \dHf Y/\dHAc Y\lto \dHf X/\dHAc X,
\]
which induces the derived inverse image functor $\bL f^*$ on the derived categories of quasi-coherent sheaves.
Similarly, we have a DG tensor functor $(-) \otimes \mP^{\cdot}$  from $\dHf X/\dHAc X$ to itself.

Thus, we have three different DG categories $\dI(X),$ $\dCom X/\dAc X,$ and $\dHf X/\dHAc X,$ which are natural quasi-equivalent
enhancements for $\D(\Qcoh X).$ There is no reason to make difference between them, but sometimes one of them is more favorable
because some quasi-functors can be realized as usual DG functors.
In this paper we work with the DG category $\dHf X/\dHAc X,$ which will be denoted by $\dD(\Qcoh X),$
since pulbacks and tensor products are DG functors on them.

For any morphism of schemes $f: X\to Y$ we also have a DG functor $\mf_*$ from $\dI(X)$ to
$\dCom Y/\dAc Y$ acting  componentwise on h-injective complexes. This DG functor induces a quasi-functor that we will denote by the same letter
\[
\mf_*: \dD(\Qcoh X)\stackrel{\sim}{\lto}\dCom X/\dAc X\stackrel{\sim}{\longleftarrow}\dI(X)\stackrel{\mf_*}{\lto}
\dCom Y/\dAc Y
\stackrel{\sim}{\longleftarrow}\dD(\Qcoh X).
\]

Recall now the important notion of a perfect complex on a scheme $X,$ which was introduced in \cite{SGA6}. A
perfect complex is a complex of sheaves which is locally
quasi-isomorphic to a bounded complex of locally free sheaves of finite
type (a good reference is \cite{TT}).
\begin{definition}
Denote by $\prfdg X$ the full DG subcategory of $\dD(\Qcoh X)$ consisting of all perfect complexes.
\end{definition}

The triangulated category $\prf X=\Ho(\prfdg X)$ is a full subcategory of $\D(\Qcoh X).$
Amnon Neeman in \cite{Ne} showed that
the triangulated category $\D(\Qcoh X)$ is compactly generated and $\prf X$ is nothing but the subcategory of compact object in $\D(\Qcoh X).$
In \cite{Ne} this assertion is proved for any quasi-compact and separated scheme, in \cite{BVdB} a generalization of this fact for
$\D_{\Qcoh}(X)$ was established for a quasi-compact and quasi-separated scheme.

For any morphism of schemes $f: X\to Y$ the DG functor $\mf^*$ induces a DG functor
\[
\mf^*:\prfdg Y\to \prfdg X.
\]

Under some conditions on the morphism $f:X\to Y,$ the induced quasi-functor $\mf_*$ from
$\dD(\Qcoh X)$ to $\dD(\Qcoh Y)$ sends $\prfdg X$ to $\prfdg Y$ (see \cite[2.5.4]{TT} and \cite[III]{SGA6}).
Thus, for noetherian schemes $X$ and $Y$ if $f$ is proper and has finite
Tor-dimension we obtain a quasi-functor
\[
\mf_*: \prfdg X\lto \prfdg Y.
\]
Note that it holds for any morphism between smooth and proper schemes.

In \cite{Ne, BVdB} it was proved that the category $\prf X$ admits a classical generator $\mE$ and, hence, $\mE$
is a compact generator of the whole $\D(\Qcoh X).$
Let us take such a generator $\mE\in\prfdg X.$ Denote by $\dE$ its DG algebra of endomorphisms,
i.e. $\dE=\dHom(\mE, \mE).$ Since $\mE$ is perfect, the DG algebra $\dE$ has only finitely many cohomologies.
Proposition \ref{Keller2} implies the following statement.
\begin{statement}\cite[3.1.8]{BVdB}\label{statement}
The DG category $\dD(\Qcoh X)$ is quasi-equivalent to $\SF\dE$ and $\prfdg X$ is quasi-equivalent to
$\prfdg \dE,$ where $\dE$ is a DG algebra with bounded cohomology.
\end{statement}

This fact allows us to suggest a definition of a (derived) noncommutative scheme over $\kk.$
\begin{definition}\label{noncommutative_scheme}
A {\em (derived) noncommutative scheme} over a field $\kk$ is a $\kk$\!--linear DG category of the form $\prfdg\dE,$
where $\dE$ is a cohomologically  bounded DG algebra over $\kk.$ The derived category
$\D(\E)$ will be called the derived category of quasi-coherent sheaves on this noncommutative scheme.
\end{definition}

For a noetherian scheme $X$ we consider the abelian category of coherent sheaves
$\coh X.$
Denote by $\D^b(\coh(X))$
the bounded derived category of coherent
sheaves on $X.$
Since $X$ is noetherian the natural functor $\D^b(\coh(X))\to
\D(\Qcoh(X))$ is fully faithful and realizes an equivalence of
$\D^b(\coh(X))$ with the full subcategory
$\D^b(\Qcoh(X))_{\coh}\subset \D(\Qcoh(X))$ consisting of all cohomologically bounded
complexes with coherent cohomology (see \cite[II 2.2.2]{SGA6}).
Because of that,  when we  consider $\D^b(\coh(X))$ as a subcategory of
$\D(\Qcoh(X))$ we will identify it with the full subcategory
$\D^b(\Qcoh(X))_{\coh},$ adding all isomorphic objects.
The enhancement  $\dD(\Qcoh X)$ induces an enhancement of $\D^b(\coh X)$ that we denote
by $\dD^b(\coh X).$

\subsection{Gluing of DG categories}

Let $\dA$ and $\dB$ be two small DG categories and let $\mS$ be a $\dB\hy\dA$\!--bimodule,
i.e. a DG  $\dB^{\op}\otimes\dA$\!--module.
We now construct a so called upper triangular DG category corresponding to  the data
$(\dA, \dB, \mS).$

\begin{definition}\label{upper_tr}
Let $\dA$ and $\dB$ be two small DG categories and let $\mS$ be a $\dB\hy\dA$\!--bimodule.
The {\em upper triangular} DG category $\dC=\dA\underset{\mS}{\with}\dB$ is defined as follows:
\begin{enumerate}
\item[1)] $\Ob(\dC)=\Ob(\dA)\bigsqcup\Ob(\dB),$

\item[2)]
$
\dHom_{\dC}(X, Y)=
\begin{cases}
 \dHom_{\dA}(X, Y), & \text{ when $X,Y\in\dA$}\\
     \dHom_{\dB}(X, Y), & \text{ when $X,Y\in\dB$}\\
      \mS(Y, X), & \text{ when $X\in\dA, Y\in\dB$}\\
      0, & \text{ when $X\in\dB, Y\in\dA$}
\end{cases}
$
\end{enumerate}
with evident composition law coming from DG categories $\dA, \dB$ and the bimodule structure on $\mS.$
\end{definition}

The upper triangular DG category $\dC=\dA\underset{\mS}{\with}\dB$ is not necessary pretriangulated even if the components $\dA$ and $\dB$
are pretriangulated. To make this operation well defined on the class of pretriangulated categories we introduce a so called gluing
of pretriangulated categories.

\begin{definition}\label{gluing_cat}
Let $\dA$ and $\dB$ be two small pretriangulated DG categories and let $\mS$ be a $\dB\hy\dA$\!--bimodule.
 A {\em gluing} $\dA\underset{\mS}{\oright}\dB$ of DG categories $\dA$ and $\dB$ via $\mS$ is defined as the pretriangulated hull of $\dA\underset{\mS}{\with}\dB,$ i.e. $\dA\underset{\mS}{\oright}\dB=(\dA\underset{\mS}{\with}\dB)^{\ptr}.$
\end{definition}

\begin{remark}{\rm
The gluing can be defined for any DG categories not only for pretriangulated
(see, for example, \cite{KL}). The resulting DG category is not necessary pretriangulated.
However, we prefer to restrict ourself to the pretriangulated case,
because the definition above is more convenient
for our purposes. Since we use different definition we give different proofs for
Propositions \ref{dg_semiorhtogonal}, \ref{gluing_semi-orthogonal}, and \ref{gluing_quasifunctors} in spite of they were also proved
in \cite{KL}.
}
\end{remark}

Natural fully faithful DG inclusions $\ma: \dA\hookrightarrow \dA\underset{\mS}{\with}\dB$ and
$\mb: \dB\hookrightarrow \dA\underset{\mS}{\with}\dB$ induce  fully faithful DG
functors $\ma^*: \dA\hookrightarrow \dA\underset{\mS}{\oright}\dB$ and
$\mb^*: \dB\hookrightarrow \dA\underset{\mS}{\oright}\dB.$

It is easy to see that the restriction functor $\mb_*: \Mod(\dA\underset{\mS}{\with}\dB) \to \Mod\dB$
sends semi-free DG modules to semi-free and we obtain a DG functor
$\SFf(\dA\underset{\mS}{\with}\dB)\to\SFf\dB.$ By assumption $\dB$ is pretriangulated, and
we know that the pretriangulated hull is DG-equivalent to the DG category of finitely generated semi-free DG modules.
Thus we obtain a quasi-functor $\mb_*: \dA\underset{\mS}{\oright}\dB \to \dB$ that is right adjoint to
$\mb^*.$
These quasi-functors induce exact functors
\[
a^*:\Ho(\dA)\lto\Ho(\dA\underset{\mS}{\oright}\dB), \quad b^*:\Ho(\dB)\lto\Ho(\dA\underset{\mS}{\oright}\dB),\quad b_*: \Ho(\dA\underset{\mS}{\oright}\dB)\lto \Ho(\dB)
\]
between triangulate categories such that $a^*, b^*$ are fully faithful, and $b_*$ is right adjoint to $b^*.$
Therefore, there is a semi-orthogonal decomposition
\[
\Ho(\dA\underset{\mS}{\oright}\dB)=\langle \N,\; \Ho(\dB)\rangle
\]
with some triangulated subcategory $\N.$ It is evident that $\Ho(\dA)$ is a full subcategory of $\N$ with respect to the functor $a^*.$
The subcategory $\N$ is left admissible and we have the quotient functor $\Ho(\dA\underset{\mS}{\oright}\dB)\to\N$ that sends
$\Ho(\dB)$ to zero. Since the category $\Ho(\dA\underset{\mS}{\oright}\dB)$ is generated by the union of objects
$a^* \mh^X_{\dA}$ and $b^*\mh^Y_{\dB}$ we obtain that the subcategory $\N$ is generated by the objects $a^* \mh^X_{\dA}.$
Hence $\N$ coincides with the triangulated subcategory $\Ho(\dA)\subset\N,$ because it also contains all these objects.
Thus, we have proved the following proposition.

\begin{proposition}\label{dg_semiorhtogonal} Let the DG category $\dC$ be the gluing $ \dA\underset{\mS}{\oright}\dB.$
Then the DG functors $\ma^*: \dA\to\dC$ and $\mb^*: \dB\to\dC$ induce a semi-orthogonal decomposition
for the triangulated category $\Ho(\dC)$ of the form
$\Ho(\dC)=\langle\Ho(\dA), \Ho(\dB)\rangle.$
\end{proposition}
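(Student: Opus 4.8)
The plan is to exhibit $\Ho(\dB)$ as a right admissible subcategory of $\Ho(\dC)$ and to identify its left orthogonal with the image of $\Ho(\dA)$. First I would unwind the construction. Since $\dC=\dA\underset{\mS}{\oright}\dB=(\dA\underset{\mS}{\with}\dB)^{\ptr}$, the category $\Ho(\dC)=\tr(\dA\underset{\mS}{\with}\dB)$ is generated by the representable DG modules $\mh^{Z}$ with $Z$ ranging over $\Ob\dA\sqcup\Ob\dB$. The full DG embeddings $\ma\colon\dA\hookrightarrow\dA\underset{\mS}{\with}\dB$ and $\mb\colon\dB\hookrightarrow\dA\underset{\mS}{\with}\dB$ give induction DG functors on finitely generated semi-free modules, hence quasi-functors $\ma^{*}$ and $\mb^{*}$ into $\dC$; because $\dA$ and $\dB$ are pretriangulated one has $\tr(\dA)=\Ho(\dA)$ and $\tr(\dB)=\Ho(\dB)$, and by Proposition \ref{Keller2} together with Remark \ref{Keller2_fg} the induced exact functors $a^{*}\colon\Ho(\dA)\to\Ho(\dC)$ and $b^{*}\colon\Ho(\dB)\to\Ho(\dC)$ are fully faithful, carrying the representables $\mh^{X}_{\dA}$ and $\mh^{Y}_{\dB}$ to the representables $\mh^{X}$ and $\mh^{Y}$ of $\dA\underset{\mS}{\with}\dB$ inside $\Ho(\dC)$.

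Next I would produce a right adjoint for $\mb^{*}$. The restriction functor $\mb_{*}\colon\Mod(\dA\underset{\mS}{\with}\dB)\to\Mod\dB$ carries (finitely generated) semi-free modules to semi-free ones, so it restricts to a DG functor $\SFf(\dA\underset{\mS}{\with}\dB)\to\SFf\dB$; composing with the quasi-equivalence $\SFf\dB\simeq\dB$ (valid since $\dB$ is pretriangulated) yields a quasi-functor $\mb_{*}\colon\dC\to\dB$ right adjoint to $\mb^{*}$. On homotopy categories $b^{*}$ is then a fully faithful exact functor admitting a right adjoint $b_{*}$, so its essential image is a right admissible subcategory of $\Ho(\dC)$; by Remark \ref{semad} this yields a semi-orthogonal decomposition $\Ho(\dC)=\langle\N,\Ho(\dB)\rangle$ with $\N=\Ho(\dB)^{\perp}\simeq\Ho(\dC)/\Ho(\dB)$.

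It remains to identify $\N$ with the image of $a^{*}$, which is the heart of the matter. For $X\in\Ob\dA$ and $Y\in\Ob\dB$ the upper triangular category satisfies $\dHom_{\dA\underset{\mS}{\with}\dB}(Y,X)=0$ by definition, so by Yoneda $\Hom_{\Ho(\dC)}(\mh^{Y},\mh^{X})=0$. Since the image of $\Ho(\dA)$ is generated by the $\mh^{X}$, the image of $\Ho(\dB)$ is generated by the $\mh^{Y}$, and $\N=\Ho(\dB)^{\perp}$ is a triangulated subcategory, a routine two-step generation argument (fixing successively one generator in each slot) gives $\Ho(\dA)\subseteq\N$. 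Conversely, the quotient functor $q\colon\Ho(\dC)\to\Ho(\dC)/\Ho(\dB)\simeq\N$ is essentially surjective, kills every $\mh^{Y}$, and fixes every $\mh^{X}$ since the latter already lies in $\N$; as $\Ho(\dC)$ is generated by the union of the $\mh^{X}$ and the $\mh^{Y}$, it follows that $\N$ is generated by the $\mh^{X}$ alone, all of which lie in the triangulated subcategory $\Ho(\dA)$. Hence $\N\subseteq\Ho(\dA)$, so $\N=\Ho(\dA)$ and $\Ho(\dC)=\langle\Ho(\dA),\Ho(\dB)\rangle$.

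The step I expect to be the main obstacle is exactly this last identification — ruling out that the left orthogonal of $\Ho(\dB)$ is strictly larger than $\Ho(\dA)$. It rests on two structural features of the gluing: that $\Ho(\dC)=\tr(\dA\underset{\mS}{\with}\dB)$ is generated by the two families of representable modules, and that the upper triangular category adds no morphisms from objects of $\dB$ to objects of $\dA$. Everything else is bookkeeping: checking that $\mb_{*}$ preserves (finitely generated) semi-free modules and descends to a quasi-functor right adjoint to $\mb^{*}$, and invoking Keller's full-faithfulness statement for the induction functors.
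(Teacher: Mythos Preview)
Your proposal is correct and follows essentially the same route as the paper's own argument: establish full faithfulness of $a^{*}$ and $b^{*}$, construct the right adjoint $b_{*}$ via restriction on finitely generated semi-free modules, obtain the semi-orthogonal decomposition $\Ho(\dC)=\langle\N,\Ho(\dB)\rangle$, and then identify $\N$ with $\Ho(\dA)$ by noting that the quotient functor kills the $\mh^{Y}$ and that the whole category is generated by the two families of representables. The only cosmetic difference is that the paper treats full faithfulness of $a^{*},b^{*}$ as immediate from the DG inclusions rather than invoking Proposition~\ref{Keller2}, and states the quotient-generation step a bit more tersely.
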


On the other hand, we can show that any enhancement of a  triangulated category with a semi-orthogonal decomposition
can be obtained as a gluing of enhancements of the summands.

\begin{proposition}\label{gluing_semi-orthogonal}
Let $\dC$ be a pretriangulated DG category. Suppose that we have a semi-orthogonal decomposition
$\Ho(\dC)=\langle \A, \B\rangle.$ Then the DG category $\dC$ is quasi-equivalent to the gluing
$\dA\underset{\mS}{\oright}\dB,$ where $\dA, \dB\subset\dC$ are full DG subcategories
with the same objects as $\A$ and $\B,$ respectively, and the $\dB\hy\dA$\!--bimodule is given by the rule
\begin{equation}\label{bimodule}
\mS ( Y, X)=\dHom_{\dC}(X, Y), \quad \text{with}\quad X\in\dA \;\text{and}\; Y\in\dB.
\end{equation}
\end{proposition}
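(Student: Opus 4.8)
The plan is to produce an explicit quasi-equivalence between $\dC$ and the upper triangular DG category $\dD := \dA\underset{\mS}{\with}\dB$, and then invoke pretriangulatedness of $\dC$ to pass to the gluing. The key observation is that there is a natural DG functor $\Phi\colon \dD \to \dC$: on objects it is the identity (recall $\Ob(\dD)=\Ob(\dA)\sqcup\Ob(\dB)$ and both $\dA,\dB$ are full DG subcategories of $\dC$ on disjoint sets of objects), and on morphisms it is the identity on $\dHom_{\dA}$ and $\dHom_{\dB}$, while on the off-diagonal block it is the \emph{identity} map $\mS(Y,X) = \dHom_{\dC}(X,Y) \to \dHom_{\dC}(X,Y)$ by the very definition \eqref{bimodule} of $\mS$. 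One must check that this respects composition: the three non-trivial composition maps in $\dD$ (namely $\dHom_{\dB}\otimes\mS \to \mS$, $\mS\otimes\dHom_{\dA}\to\mS$, and their interaction with the zero block) are by construction the bimodule action maps on $\mS$, and since $\mS(Y,X)=\dHom_{\dC}(X,Y)$ with its $\dC$-bimodule structure, these are exactly the composition maps of $\dC$. Hence $\Phi$ is a well-defined DG functor, and it is fully faithful on the nose (an equality of morphism complexes, not merely a quasi-isomorphism), because every morphism complex of $\dD$ coincides with the corresponding morphism complex of $\dC$ --- except for the $X\in\dB,\ Y\in\dA$ block, where $\dHom_{\dD}=0$, but that block is \emph{also} zero in $\dC$ precisely because $\Ho(\dC)=\langle \A,\B\rangle$ is a semi-orthogonal decomposition, so $\dHom_{\Ho(\dC)}(X,Y[m])=0$ for $X\in\B$, $Y\in\A$ --- wait, that only gives vanishing of cohomology. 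This is the point that needs care: $\dHom_{\dC}(X,Y)$ for $X\in\dB$, $Y\in\dA$ need only be \emph{acyclic}, not literally zero.

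To handle this I would not try to make $\Phi$ an honest full embedding. Instead, apply Proposition \ref{Keller2} (and Remark \ref{Keller2_fg}) to the full embedding $\Phi\colon\dD\hookrightarrow\dC$: here "full embedding" should be read in the sense that $\Phi$ is injective on objects and the induced homotopy functor $\Ho(\dD)\to\Ho(\dC)$ is faithful --- but in fact the cleaner route is to replace $\dC$ up to quasi-equivalence by a DG category $\dC'$ in which the $\dB$-to-$\dA$ Hom-complexes are \emph{strictly} zero. Such a $\dC'$ is obtained by a standard DG-categorical manoeuvre: take $\dA,\dB\subset\dC$, form their union as a non-full DG subcategory by setting the off-diagonal $\dB$-to-$\dA$ block to $0$ and the $\dA$-to-$\dB$ block to $\dHom_{\dC}(X,Y)$; call this $\dC''$. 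The inclusion $\dC''\hookrightarrow\dC$ (on the full subcategory spanned by $\Ob\dA\cup\Ob\dB$) is then a quasi-equivalence, since the only morphism complexes that change are the acyclic ones $\dHom_{\dC}(X,Y)$ with $X\in\dB$, $Y\in\dA$, which get replaced by $0$; acyclicity guarantees $H^0$ is unchanged. But $\dC''$ is, by inspection, \emph{identical} to $\dD=\dA\underset{\mS}{\with}\dB$ with $\mS$ as in \eqref{bimodule}. (One should double-check that passing to the full subcategory of $\dC$ on $\Ob\dA\cup\Ob\dB$ loses nothing: $\dC$ is pretriangulated and $\Ho(\dC)=\langle\A,\B\rangle$ is generated by $\A\cup\B$, hence $\dC$ is quasi-equivalent to its pretriangulated hull, which is generated by those objects --- so the full DG subcategory on $\Ob\A\cup\Ob\B$ has pretriangulated hull quasi-equivalent to $\dC$ by Proposition \ref{Keller2} together with Remark \ref{perfect} or Remark \ref{Keller2_fg}.)

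Assembling the chain: $\dC$ is quasi-equivalent to the pretriangulated hull of its full DG subcategory $\dC_0$ on $\Ob\A\cup\Ob\B$ (using that $\Ho(\dC)$ is generated by these objects and Remark \ref{Keller2_fg}); $\dC_0$ in turn is quasi-equivalent, via the acyclicity argument above, to $\dD=\dA\underset{\mS}{\with}\dB$; and by Proposition \ref{pre-tr_equivalence} this quasi-equivalence induces a quasi-equivalence of pretriangulated hulls, so $\dC \simeq (\dA\underset{\mS}{\with}\dB)^{\ptr} = \dA\underset{\mS}{\oright}\dB$ by Definition \ref{gluing_cat}. This is exactly the assertion.

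\textbf{Main obstacle.} The genuine subtlety, as flagged above, is that in a general DG enhancement $\dC$ of a semi-orthogonally decomposed triangulated category, the ``wrong-way'' morphism complexes $\dHom_{\dC}(X,Y)$ for $X\in\dB$, $Y\in\dA$ are only acyclic, not zero, so $\dC$ is not literally an upper triangular DG category --- one must pass to a quasi-equivalent model where they vanish, and verify that truncating acyclic Hom-complexes to zero in a full DG subcategory is a quasi-equivalence and is compatible with the DG composition (it is, since composing anything with a morphism whose complex we are killing lands in a complex that also stays fixed, and the relevant composition maps into the killed complexes are simply discarded). Everything else is bookkeeping: checking that $\Phi$ respects composition reduces to unwinding the definition of the bimodule structure on $\mS$, and the final passage to pretriangulated hulls is Proposition \ref{pre-tr_equivalence}.
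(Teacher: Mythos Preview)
Your proof is correct and follows essentially the same route as the paper's: construct the DG functor $\dA\underset{\mS}{\with}\dB\to\dC$, observe that semi-orthogonality makes the wrong-way Hom complexes in $\dC$ acyclic so this functor is a quasi-equivalence onto the full DG subcategory $\dC_0$ on $\Ob\dA\cup\Ob\dB$, and then invoke Propositions \ref{pre-tr_equivalence}, \ref{Keller2}, and Remark \ref{Keller2_fg} (together with the fact that $\Ob\dA\cup\Ob\dB$ generates $\Ho(\dC)$) to pass to pretriangulated hulls. You spell out the acyclic-versus-zero subtlety more carefully than the paper, which simply calls the functor an ``inclusion'' and states the quasi-equivalence with $\dC_0$ in one line.
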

\begin{dok}
Take full DG subcategories $\dA\subset\dC$ and $\dB\subset\dC$ with objects from $\A$ and $\B,$ respectively.
Consider the $\dB\hy\dA$\!--bimodule $\mS$ defined by rule (\ref{bimodule}).
There is a natural inclusion of the upper triangular DG category $\dA\underset{\mS}{\with}\dB$ into $\dC.$
Since $\dC$ is pretriangulated we obtain a quasi-functor from the pretriangulated hull
$\dA\underset{\mS}{\oright}\dB$ to $\dC.$

Since $\A$ and $\B$ are semi-orthogonal, the DG category $\dA\underset{\mS}{\with}\dB$ under the inclusion $\dA\underset{\mS}{\with}\dB \hookrightarrow \dC$ is  quasi-equivalent to the full
DG subcategory of $\dC$ on the set of objects $\Ob(\dA)\bigsqcup\Ob(\dB).$
Combining  Propositions \ref{pre-tr_equivalence}, \ref{Keller2}, and Remark \ref{Keller2_fg} we obtain that the functor
$\Ho(\dA\underset{\mS}{\oright}\dB)\to\Ho(\dC)$ is fully faithful.
Since the set $\Ob(\dA)\bigsqcup\Ob(\dB)$ generates the category $\Ho(\dC),$  this functor is an equivalence
by Remark \ref{Keller2_fg}.
\end{dok}

\begin{example}\label{ex1}
{\rm Let $X$ be a noetherian scheme and let $\E$ be a vector bundle on $X$ of rank $2.$
Consider the projectivization $\PP(\E^{\vee})$ with projection $p: \PP(\E^{\vee})\to X.$
Denote by $\cO(1)$ the antitautological line bundle on $\PP(\E^{\vee}).$ We know that $\bR p_*\cO(1)\cong \E$
and $p^*$ is fully faithful.
It was shown in \cite{Blow} that there is a semi-orthogonal decomposition of the form
\[
\prf \PP(\E^{\vee})=\langle p^* \prf X,\; p^* \prf X \otimes \cO(1)\rangle.
\]
 Furthermore,
the DG category $\prfdg \PP(\E^{\vee})$ is quasi-equivalent to the gluing
$\prfdg X \underset{\mS_{\E}}{\oright}\prfdg X,$ where $\mS_{\E}$ is a DG bimodule of the form
\[
\mS_{\E}(B, A)\cong\dHom_{\prfdg X}(A,\; B\otimes\E),\quad\text{where}\quad A, B\in\prfdg X.
\]
By the same rule the DG category $\dD^b(\coh \PP(\E^{\vee}))$
can be obtain as the  gluing of $\dD^b(\coh X)$ with itself via $\mS_{\E}.$}
\end{example}

\begin{example}\label{ex2}
{\rm
Let $\pi: \wt{X}\to X$ be a blowup of a regular scheme $X$ along a closed regular subscheme $Y$ of codimension $2.$
The functor $\bL\pi^*$ is fully faithful. Consider the exceptional divisor $j: E\hookrightarrow \wt{X}.$
The morphism $p: E\to Y$ is the projectivization of the normal bundle to $Y$ in $X.$
The functor $\bR j_* p^*$ is fully faithful as well.
The triangulated category $\prf \wt{X}$ has a semi-orthogonal decomposition of the form
\[
\prf \wt{X}=\langle \bL\pi^* \prf X,\; \bR j_* p^*\prf Y\rangle.
\]
Furthermore,
the DG category $\prfdg \wt{X}$ is quasi-equivalent to the gluing
$\prfdg X \underset{\mS}{\oright}\prfdg Y,$ where $\mS$ is a DG bimodule of the form
\[
\mS(B, A)\cong\dHom_{\prfdg Y}(\mi^* A,\;  B),\quad\text{where}\quad A\in\prfdg X,\; B\in\prfdg Y,\quad\text{and}\quad i: Y\hookrightarrow X.
\]
By the same rule the DG category $\dD^b(\coh \wt{X})$
can be obtained as  gluing via $\mS.$

}
\end{example}

Let $\ma:\dA\to \dA'$ and $\mb:\dB\to \dB'$ be DG functors between small pretriangulated  DG categories. Let $\mS$ and $\mS'$ be bimodules, i.e.
DG modules over $\dB^{\op}\otimes\dA$ and ${\dB'}^{\op}\otimes\dA'$ respectively. Consider the restriction functor on bimodules
\[
(\mb\otimes\ma)_*: \Mod ({\dB'}^{\op}\otimes\dA')\to \Mod (\dB^{\op}\otimes\dA).
\]
Suppose that we have a map of DG modules $\phi:\mS\to (\mb\otimes\ma)_*\mS'.$ Then it is evident from Definition \ref{upper_tr}
that there are DG functors
\[
\ma\underset{\phi}{\with}\mb: \dA\underset{\mS}{\with}\dB \lto \dA'\underset{\mS'}{\with}\dB',
\quad
\text{ and }\quad
\ma\underset{\phi}{\oright}\mb: \dA\underset{\mS}{\oright}\dB \lto \dA'\underset{\mS'}{\oright}\dB'.
\]

Furthermore, assume that $\phi$ is a quasi-isomorphism.
Now if the exact functors $a:\Ho(\dA)\to\Ho(\dA')$ and $b:\Ho(\dB)\to \Ho(\dB')$ are fully faithful, then
\[
a\underset{\phi}{\with}b: \Ho(\dA\underset{\mS}{\with}\dB) \lto \Ho(\dA'\underset{\mS'}{\with}\dB'),
\quad
\text{and}\quad
a\underset{\phi}{\oright}b: \Ho(\dA\underset{\mS}{\oright}\dB) \lto \Ho(\dA'\underset{\mS'}{\oright}\dB')
\]
are fully faithful by Theorem \ref{Keller2} and Remark \ref{Keller2_fg}.

If $\ma$ and $\mb$ are quasi-equivalences and $\phi$ is a quasi-isomorphism, then
$\ma\underset{\phi}{\with}\mb$ is a quasi-equivalence and, by Remark \ref{Keller2_fg},  $\ma\underset{\phi}{\oright}\mb$
is a quasi-equivalence too since the objects of $\dA\underset{\mS}{\with}\dB$ generate $\Ho(\dA'\underset{\mS'}{\oright}\dB')$
in this case.

This statement can be generalized to a class of quasi-functors.
Indeed, quasi-functors  $\ma:\dA\to \dA'$ and $\mb:\dB\to \dB'$ induce a quasi-functor
$\mb\otimes\ma: \dB^{\op}\otimes\dA \to {\dB'}^{\op}\otimes\dA'.$ The quasi-functor $\mb\otimes\ma$
induces a derived functor
\[
\bR (\mb\otimes\ma)_*: \D({\dB'}^{\op}\otimes\dA')\lto \D(\dB^{\op}\otimes\dA)
\]
by rule (\ref{derived_quasi}).
Any quasi-functor $\ma:\dC\to \dD$ can be realized as a roof $\dC\stackrel{\sim}{\leftarrow} \dC'\to \dD$
and any morphism of bimodules $\mM\to\mN$ in $\D(\C)$ can be represented as
a roof of  the form $\mM\stackrel{\sim}{\leftarrow}\mM'\to \mN.$
Hence, we obtain the following proposition.

\begin{proposition}\label{gluing_quasifunctors}
Let $\ma:\dA\to \dA'$ and $\mb:\dB\to \dB'$ be  quasi-functors between small DG categories. Let $\mS$ and $\mS'$ be
DG modules over $\dB^{\op}\otimes\dA$ and ${\dB'}^{\op}\otimes\dA'$ respectively. Assume that there is a morphism
$\phi:\mS\to \bR (\mb\otimes\ma)_*\mS'$ in $\D(\dB^{\op}\otimes\dA).$ Then there are quasi-functors
\[
\ma\underset{\phi}{\with}\mb: \dA\underset{\mS}{\with}\dB \lto \dA'\underset{\mS'}{\with}\dB',
\quad
\text{and}\quad
\ma\underset{\phi}{\oright}\mb:\dA\underset{\mS}{\oright}\dB \lto \dA'\underset{\mS'}{\oright}\dB'.
\]
Moreover, suppose that $\phi$ is a quasi-isomorphism.
If $a:\Ho(\dA)\to\Ho(\dA')$ and $b:\Ho(\dB)\to \Ho(\dB')$ are fully faithful, then
\[
a\underset{\phi}{\with}b: \Ho(\dA\underset{\mS}{\with}\dB) \lto \Ho(\dA'\underset{\mS'}{\with}\dB'),
\quad
\text{and}\quad
a\underset{\phi}{\oright}b:\Ho(\dA\underset{\mS}{\oright}\dB) \lto \Ho(\dA'\underset{\mS'}{\oright}\dB')
\]
are fully faithful.
If $\ma, \mb$ are quasi-equivalences, then both
$\ma\underset{\phi}{\with}\mb$ and $\ma\underset{\phi}{\oright}\mb$ are quasi-equivalences.
\end{proposition}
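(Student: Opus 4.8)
The plan is to reduce the statement to the strict situation treated just before the proposition, where $\ma$ and $\mb$ are honest DG functors and $\phi$ an honest morphism of DG modules; all three assertions then follow by transporting the strict ones along a roof.

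First I would replace the quasi-functors by roofs of DG functors. Using the model structure on $\DGcat_{\kk},$ realize $\ma$ and $\mb$ as roofs $\dA\stackrel{\ma_1}{\stackrel{\sim}{\longleftarrow}}\dA_1\stackrel{\ma_0}{\lto}\dA'$ and $\dB\stackrel{\mb_1}{\stackrel{\sim}{\longleftarrow}}\dB_1\stackrel{\mb_0}{\lto}\dB'$ with $\dA_1,\dB_1$ cofibrant, $\ma_0,\mb_0$ DG functors, and $\ma_1,\mb_1$ quasi-equivalences. Then $\dB_1^{\op}\otimes\dA_1$ models $\dB^{\op}\stackrel{\bL}{\otimes}\dA,$ the roof $\dB^{\op}\otimes\dA\stackrel{\mb_1\otimes\ma_1}{\stackrel{\sim}{\longleftarrow}}\dB_1^{\op}\otimes\dA_1\stackrel{\mb_0\otimes\ma_0}{\lto}{\dB'}^{\op}\otimes\dA'$ realizes the quasi-functor $\mb\otimes\ma,$ and by~(\ref{derived_quasi}) the derived functor $\bR(\mb\otimes\ma)_*$ is obtained by restricting along $\mb_0\otimes\ma_0$ and then inducing (derived) along $g:=\mb_1\otimes\ma_1.$ Since $g$ is a Morita equivalence (both factors are quasi-equivalences), restriction $g_*\colon\D(\dB^{\op}\otimes\dA)\stackrel{\sim}{\to}\D(\dB_1^{\op}\otimes\dA_1)$ is an equivalence; transporting $\phi$ through it and unwinding $\bR(\mb\otimes\ma)_*$ via the roof yields a morphism $g_*\mS\to(\mb_0\otimes\ma_0)_*\mS'$ in $\D(\dB_1^{\op}\otimes\dA_1).$ Choosing a semi-free resolution $\wt\mS\stackrel{\sim}{\to}g_*\mS$ over $\dB_1^{\op}\otimes\dA_1,$ h-projectivity of $\wt\mS$ represents this morphism by an honest DG-module map $\psi\colon\wt\mS\to(\mb_0\otimes\ma_0)_*\mS';$ moreover $\psi$ is a quasi-isomorphism whenever $\phi$ is.

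We are now in the strict situation. By the construction coming from Definition~\ref{upper_tr}, the data $(\ma_0,\mb_0,\psi)$ produce DG functors $\ma_0\underset{\psi}{\with}\mb_0\colon\dA_1\underset{\wt\mS}{\with}\dB_1\to\dA'\underset{\mS'}{\with}\dB'$ and $\ma_0\underset{\psi}{\oright}\mb_0\colon\dA_1\underset{\wt\mS}{\oright}\dB_1\to\dA'\underset{\mS'}{\oright}\dB'.$ Applying the strict construction, instead, to the quasi-equivalences $\ma_1,\mb_1$ together with the quasi-isomorphism $\wt\mS\stackrel{\sim}{\to}g_*\mS=(\mb_1\otimes\ma_1)_*\mS$ yields, by the strict case recalled before the proposition, quasi-equivalences $\dA_1\underset{\wt\mS}{\with}\dB_1\stackrel{\sim}{\to}\dA\underset{\mS}{\with}\dB$ and $\dA_1\underset{\wt\mS}{\oright}\dB_1\stackrel{\sim}{\to}\dA\underset{\mS}{\oright}\dB.$ Composing, the roofs
\[
\dA\underset{\mS}{\with}\dB\ \stackrel{\sim}{\longleftarrow}\ \dA_1\underset{\wt\mS}{\with}\dB_1\ \stackrel{\ma_0\underset{\psi}{\with}\mb_0}{\lllto}\ \dA'\underset{\mS'}{\with}\dB'
\quad\text{and}\quad
\dA\underset{\mS}{\oright}\dB\ \stackrel{\sim}{\longleftarrow}\ \dA_1\underset{\wt\mS}{\oright}\dB_1\ \stackrel{\ma_0\underset{\psi}{\oright}\mb_0}{\lllto}\ \dA'\underset{\mS'}{\oright}\dB'
\]
define the required quasi-functors $\ma\underset{\phi}{\with}\mb$ and $\ma\underset{\phi}{\oright}\mb.$ For the remaining claims, assume $\phi$ (hence $\psi$) is a quasi-isomorphism. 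If $a=H^0(\ma)$ and $b=H^0(\mb)$ are fully faithful, then so are $H^0(\ma_0)$ and $H^0(\mb_0)$ (they agree with $a,b$ under the equivalences $H^0(\ma_1),H^0(\mb_1)$); hence Proposition~\ref{Keller2} and Remark~\ref{Keller2_fg} show that $\ma_0\underset{\psi}{\with}\mb_0$ and $\ma_0\underset{\psi}{\oright}\mb_0$ are fully faithful on homotopy categories, and composing with the quasi-equivalences above gives the claim for $a\underset{\phi}{\with}b$ and $a\underset{\phi}{\oright}b.$ If in addition $\ma,\mb$ are quasi-equivalences, then $\ma_0,\mb_0$ are, so the strict statement gives that $\ma_0\underset{\psi}{\with}\mb_0$ and $\ma_0\underset{\psi}{\oright}\mb_0$ are quasi-equivalences, whence so are $\ma\underset{\phi}{\with}\mb$ and $\ma\underset{\phi}{\oright}\mb.$

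The only genuine work lies in passing to the strict situation: arranging the cofibrant and semi-free replacements so that $\dB_1^{\op}\otimes\dA_1$ really models $\dB^{\op}\stackrel{\bL}{\otimes}\dA,$ that $(\mb_0\otimes\ma_0)_*$ really models $\bR(\mb\otimes\ma)_*$ (equivalently, that $g$ is a Morita equivalence so that $g_*$ is an equivalence of derived module categories), and that the derived-category class $\phi$ is realized by a single DG map $\psi$ compatibly with all of this. Once this is in place, all three assertions are formal consequences of Definition~\ref{upper_tr}, Proposition~\ref{Keller2}, Remark~\ref{Keller2_fg}, and the strict statements preceding the proposition.
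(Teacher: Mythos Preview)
Your proposal is correct and follows essentially the same approach as the paper: the paper's argument (given in the paragraph immediately preceding the proposition) simply says that any quasi-functor can be realized as a roof $\dC\stackrel{\sim}{\leftarrow}\dC'\to\dD$ and any morphism of bimodules in the derived category as a roof $\mM\stackrel{\sim}{\leftarrow}\mM'\to\mN,$ then appeals to the strict case already established. You carry out exactly this reduction, only with the details (cofibrant replacements for the DG categories, a semi-free resolution for the bimodule, and the verification that the backward legs glue to quasi-equivalences) spelled out explicitly.
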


\subsection{Regular, smooth, and proper noncommutative schemes}\label{reg_proper_noncommutative}

Let $\T$ be a small $\kk$\!--linear triangulated category and let $\dA$ be a small $\kk$\!--linear DG category.
\begin{definition}\label{reg_and_prop} We say that $\T$ is {\em regular} if it has a strong generator,
and we say that $\T$ is proper if
$\bigoplus_{m\in\ZZ}\Hom(X, Y[m])$ is finite dimensional for any two objects $X, Y\in\T.$
\end{definition}

\begin{definition} We call $\dA$ {\em regular} (resp. {\em proper}) if the triangulated category
$\prf\dA$ is regular (resp. proper).
\end{definition}
\begin{remark}
{\rm
Instead of $\prf\dA$ we can consider $\tr(\dA).$ Since $\prf\dA$ is the idempotent completion of $\tr(\dA),$
 regularity and properness of these categories hold simultaneously.
}
\end{remark}
\begin{remark}\label{finitness}
{\rm
It is easy to see that $\dA$ is proper if and only if $\bigoplus_i H^i(\dHom(X, Y))$ are finite dimensional
for all $X, Y\in\dA.$ It is evidently necessary due to Yoneda embedding $\dA\subset\prfdg\dA.$
Since $\Ob\dA$ classically generate $\prf\dA$ it is also sufficient.
}
\end{remark}

The following theorem is due to A.~Bondal and M.~Van den Bergh.
\begin{theorem}{\rm \cite[Th. 1.3]{BVdB}}\label{saturated}
Let $\T$ be a regular and proper triangulated category which is idempotent complete (Karoubian).
Then any exact functor from $\T^{\op}$ to the bounded derived category of finite dimensional
vector spaces $\prf\kk$ is representable, i.e. it is of the form $h^{Y}=\Hom(-, Y).$
\end{theorem}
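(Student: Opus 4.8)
The plan is to prove representability by approximating $H$ by representable functors and using the strong generator to force the approximation to terminate. Fix a strong generator $G$, so $\langle G\rangle_n=\T$ for some $n$. Two finiteness facts are available: properness gives that $\bigoplus_{m\in\ZZ}\Hom(X,Y[m])$ is finite dimensional for all $X,Y\in\T$; and $H(G)\in\prf\kk$ is a bounded complex of finite dimensional vector spaces, so $\bigoplus_{m\in\ZZ}H^m(H(G))$ is finite dimensional. Composing $H$ with cohomology $H^0$ yields an honest cohomological functor $H'\colon\T^{\op}\to(\text{finite-dimensional }\kk\text{-vector spaces})$, and since $\prf\kk=\D^b$ of a field is semisimple, producing $Y\in\T$ with an isomorphism of functors $\Hom(-,Y)\stackrel{\sim}{\to}H'$ is enough to conclude $H\cong\mathbf{R}\Hom(-,Y)=h^Y$.

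First I would build, inductively, a sequence $0=Y_0\to Y_1\to Y_2\to\cdots$ in $\T$ with compatible morphisms $\phi_i\colon\Hom(-,Y_i)\to H'$ whose evaluations on the shifts $G[m]$ get progressively closer to isomorphisms. At each step, since $\bigoplus_m H'(G[m])=\bigoplus_m H^m(H(G))$ is finite dimensional, choose finitely many elements generating the cokernels of $\phi_i$ on all shifts of $G$, represent them by a morphism from a finite sum $P=\bigoplus_j G[m_j]$, and take $Y_{i+1}$ to fit into a triangle $P\to Y_i\to Y_{i+1}\to P[1]$ so that $\phi_i$ extends to $\phi_{i+1}$ with strictly larger image on each $G[m]$; similarly, using that each $\Hom(G[m],Y_i)$ is finite dimensional and nonzero for only finitely many $m$ (properness), one kills by a further extension the kernels of the maps $\phi_i(G[m])$. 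The point where regularity enters is that $\T=\langle G\rangle_n$ bounds how complicated the correcting objects must be, so that after finitely many alternations of these two corrections the process stabilises at some $Y=Y_N$ for which $\phi\colon\Hom(-,Y)\to H'$ is an isomorphism on $G[m]$ for every $m\in\ZZ$.

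It then remains to upgrade this to an isomorphism on all of $\T$. Consider the full subcategory $\dN\subset\T$ of objects $X$ for which $\phi(X)\colon\Hom(X,Y)\to H'(X)$ is an isomorphism. Both $\Hom(-,Y)$ and $H'$ are cohomological, so applying the five lemma to the long exact sequences associated with a triangle shows $\dN$ is a triangulated subcategory; it is closed under shifts, and, because $H'$ is additive and $\T$ is idempotent complete, under finite direct sums and direct summands, hence thick. Since $\dN$ contains $G$ and $G$ is a classical generator, $\dN=\T$, so $\phi$ is an isomorphism of functors and $H\cong h^Y$.

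The main obstacle is the termination of the inductive approximation: correcting a failure of surjectivity via a cone can create new elements in the kernel and vice versa, and in a small triangulated category — lacking infinite coproducts — one cannot simply pass to a homotopy colimit of the $Y_i$. The clean way to control this is to run the induction along the filtration $\langle G\rangle_1\subset\langle G\rangle_2\subset\cdots\subset\langle G\rangle_n=\T$ and prove the intermediate assertion that representability of (the restriction of) a cohomological functor on $\langle G\rangle_k$ extends to $\langle G\rangle_{k+1}$, with the finite-dimensionality bounds from properness and strong generation guaranteeing that only finitely many corrections are needed at each level; this bookkeeping is the technical heart of the argument, and is exactly Theorem~1.3 of \cite{BVdB}.
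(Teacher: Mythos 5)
You should first note that the paper does not prove this theorem at all: it is quoted verbatim from Bondal--Van den Bergh \cite[Th.~1.3]{BVdB}, so there is no internal proof to match your argument against; what can be assessed is whether your sketch would actually establish the result. Your overall strategy (reduce to a cohomological functor $H'$ of finite type via $H^0$, approximate it by representable functors built from shifts of a strong generator $G$, and finish with the thick-subcategory/five-lemma argument) is indeed the Bondal--Van den Bergh strategy, and the reduction step and the final extension step are fine.

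The genuine gap is the termination of the approximation, which you assert rather than prove, and which moreover you describe in a form that is not what actually happens and that I do not believe can be made to work as stated. Alternately killing cokernels and kernels on the $G[m]$ does \emph{not} stabilise: each cone taken to kill kernel classes can introduce new cokernel classes and vice versa, and strong generation by itself does not bound the number of such corrections in the naive sense you invoke. The actual mechanism in \cite{BVdB} is different: one only ever corrects surjectivity, producing a tower $Y_0\to Y_1\to\cdots\to Y_n$ of objects with compatible maps $\phi_i\colon\Hom(-,Y_i)\to H'$ that are surjective on $\langle G\rangle_1$ and such that $\ker\phi_i$ restricted to $\langle G\rangle_1$ dies in $Y_{i+1}$; a separate inductive lemma on the filtration $\langle G\rangle_1\subset\cdots\subset\langle G\rangle_n=\T$ then shows that the image of $\Hom(-,Y_0)\to\Hom(-,Y_n)$ maps isomorphically onto $H'$ on all of $\T$, so that $H'$ is a \emph{direct summand} of the representable functor $\Hom(-,Y_n)$. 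Only at that point does the Karoubian hypothesis enter: the corresponding idempotent in $\End(Y_n)$ splits, and the summand is the representing object $Y.$ In your sketch idempotent completeness is used only in the cosmetic closure-under-summands step at the end, which is a clear sign that the essential splitting argument --- and with it the real use of the hypothesis --- is missing. As written, the proposal therefore defers precisely the theorem to be proved back to the reference.
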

Such a triangulated category is called {\em right saturated} in \cite{BK, BVdB}.
It is proved in \cite[2.6]{BK} that if $\T$ is a right saturated triangulated category and it is a full subcategory
in  a proper triangulated category, then it is right admissible there.
By Theorem \ref{saturated} a regular and proper
idempotent complete triangulated category is right saturated. Since the opposite category is also regular and proper,
it is left saturated as well. Thus, we obtain the following proposition.
\begin{proposition}\label{admissible}
Let $\T\subset\T'$ be a full subcategory in  a proper triangulated category
$\T'.$ Assume that $\T$ is regular and  idempotent complete.  Then $\T$ is admissible in $\T'.$
\end{proposition}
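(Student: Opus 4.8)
The plan is to deduce Proposition \ref{admissible} directly from Theorem \ref{saturated} together with the cited fact from \cite[2.6]{BK}. First I would recall that a triangulated category $\T$ is called \emph{right saturated} if every exact functor $\T^{\op}\to\prf\kk$ is representable; Theorem \ref{saturated} asserts precisely that a regular, proper, idempotent complete triangulated category is right saturated. Similarly, $\T$ is \emph{left saturated} if every exact functor $\T\to\prf\kk$ is representable, equivalently if $\T^{\op}$ is right saturated. So the first observation is that if $\T$ is regular, proper, and idempotent complete, then so is $\T^{\op}$ (regularity and properness are manifestly self-dual, and idempotent completeness is preserved under passing to the opposite category), hence $\T$ is \emph{both} right and left saturated.

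Next I would invoke the embedding result from \cite[2.6]{BK}: if $\T$ is right saturated and $\T\subset\T'$ is a full triangulated subcategory of a proper triangulated category $\T'$, then the inclusion $\T\hookrightarrow\T'$ admits a right adjoint, i.e.\ $\T$ is right admissible in $\T'$. Concretely, for $Z\in\T'$ one considers the functor $\T^{\op}\to\prf\kk$, $Y\mapsto\Hom_{\T'}(Y,Z)$; this lands in $\prf\kk$ because $\T'$ is proper, and by right saturation it is represented by some $Z_0\in\T$, which provides the counit and hence the right adjoint (cf.\ Remark \ref{semad}). Applying the same argument to the opposite categories: $\T^{\op}$ is a full subcategory of the proper category $(\T')^{\op}$ and $\T^{\op}$ is right saturated (by left saturation of $\T$), so $\T^{\op}$ is right admissible in $(\T')^{\op}$; dualizing back, $\T$ is left admissible in $\T'$.

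Combining the two halves, the inclusion $\T\hookrightarrow\T'$ has both a right and a left adjoint, which is exactly the definition (Definition \ref{adm}) of $\T$ being admissible in $\T'$. This completes the argument.

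The only real content here is bookkeeping about dualization: one must be careful that ``proper'' passes to $(\T')^{\op}$ (clear, since $\bigoplus_m\Hom(X,Y[m])$ is symmetric in $X,Y$ up to relabeling), that ``regular'' and ``idempotent complete'' pass to $\T^{\op}$, and that the statement of \cite[2.6]{BK} is being applied in the correct variance. I do not anticipate a genuine obstacle; the substance is entirely carried by Theorem \ref{saturated} and the quoted lemma of \cite{BK}, and Proposition \ref{admissible} is essentially just their formal conjunction together with a self-duality observation.
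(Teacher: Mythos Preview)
Your proposal is correct and follows essentially the same route as the paper: apply Theorem \ref{saturated} to get right saturation of $\T$, invoke \cite[2.6]{BK} to obtain right admissibility in $\T'$, then dualize (using that regularity, properness, and idempotent completeness are self-dual) to obtain left admissibility. The only point you leave implicit---that $\T$ is proper because it is a full subcategory of the proper category $\T'$---is equally implicit in the paper's discussion, so there is no gap.
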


The proof of  Theorem \ref{saturated} works for DG categories without any changes (see \cite{BVdB}).
Moreover, the DG version can be deduced from Theorem \ref{saturated}.
\begin{theorem}\label{dg_saturated} Let $\dA$ be a small DG category that is regular and proper.
Then a DG module $\mM$ is perfect if and only if $\dim\bigoplus_i  H^i(\mM(X))<\infty$
for all $X\in\dA.$
\end{theorem}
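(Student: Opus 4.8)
The plan is to deduce Theorem \ref{dg_saturated} from its triangulated counterpart, Theorem \ref{saturated}, by realizing a DG module $\mM$ with finite-dimensional total cohomology at each object as a representable functor on $\prf\dA$ and then transporting that representability back into $\Mod\dA$. First I would observe that one direction is trivial: if $\mM$ is perfect, then $\mM\in\prf\dA\subset\D(\dA)$, and since $\Ob\dA$ classically generates $\prf\dA$ (Proposition after Remark \ref{perfect}), finiteness of $\bigoplus_i H^i(\mM(X))=\bigoplus_i H^i(\dHom_{\D(\dA)}(\mh^X,\mM[i]))$ for all $X\in\dA$ follows from properness exactly as in Remark \ref{finitness}. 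So the content is the converse.

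For the converse, assume $\dA$ is regular and proper and $\dim\bigoplus_i H^i(\mM(X))<\infty$ for all $X\in\dA$. Consider the restricted Yoneda-type functor
\[
\Phi:=\dHom_{\D(\dA)}(-,\mM)\colon \prf\dA^{\op}\lto \prf\kk .
\]
I would first check that $\Phi$ indeed lands in $\prf\kk$, i.e. that $\bigoplus_i H^i(\Hom(\mP,\mM[i]))$ is finite-dimensional for every perfect $\mP$: this is true for $\mP=\mh^X$ by hypothesis (since $\Hom_{\D(\dA)}(\mh^X,\mM[i])=H^i(\mM(X))$), and then it propagates to all of $\prf\dA$ because the subcategory of objects $\mP$ for which $\Phi(\mP)\in\prf\kk$ is triangulated and closed under direct summands, hence contains the classical generators $\{\mh^X\}$ and therefore everything. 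Next, $\Phi$ is an exact (cohomological-to-the-derived-category) functor from $\prf\dA^{\op}$ — which is idempotent complete by construction of $\prfdg\dA$ and is regular and proper by assumption — to $\prf\kk$. Applying Theorem \ref{saturated} (in its DG form, valid since the proof of \cite{BVdB} carries over, as noted just before the statement) yields an object $\mN\in\prf\dA$ together with a natural isomorphism $\Phi\cong h^{\mN}=\Hom_{\prf\dA}(-,\mN)$.

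Finally I would upgrade this isomorphism of functors on $\prf\dA$ to an isomorphism $\mM\cong\mN$ in $\D(\dA)$. By Yoneda, the natural isomorphism $\Hom_{\D(\dA)}(\mP,\mN)\xrightarrow{\sim}\Hom_{\D(\dA)}(\mP,\mM)$ for all $\mP\in\prf\dA$ is induced by a morphism $f\colon\mN\to\mM$ in $\D(\dA)$ (take $\mP=\mN$ and the image of $\id_{\mN}$). This $f$ induces isomorphisms on $\Hom(\mh^X,-[i])$ for every $X\in\dA$ and every $i$, i.e. $H^i(\mM(X))\cong H^i(\mN(X))$ compatibly; since $\{\mh^X\}$ is a set of compact generators of $\D(\dA)$, the cone of $f$ is acyclic, so $f$ is an isomorphism in $\D(\dA)$. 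Hence $\mM\cong\mN\in\prf\dA$, i.e. $\mM$ is perfect. The main obstacle, and the point requiring the most care, is the verification that $\Phi$ genuinely takes values in $\prf\kk$ on all perfect modules — one must not merely use the pointwise finiteness hypothesis but combine it with properness of $\prf\dA$ and the fact that $\{\mh^X\}$ classically generates, exactly the style of argument in Remark \ref{finitness}; everything after the invocation of Theorem \ref{saturated} is a routine generator/cone argument.
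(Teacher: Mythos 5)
Your proof follows the paper's argument essentially verbatim: both show the contravariant $\dHom(-,\mM)$ lands in $\prf\kk$ (propagating the pointwise finiteness hypothesis through classical generation), invoke Theorem \ref{saturated} to produce a representing object $\mN\in\prf\dA$ with a canonical map $\mN\to\mM$, and then conclude the cone vanishes because $\{\mh^X\}_{X\in\dA}$ is a set of compact generators of $\D(\dA)$. The only difference is that you spell out the intermediate verifications (the triangulated-subcategory argument for finiteness and the Yoneda step for producing $f\colon\mN\to\mM$) that the paper leaves implicit.
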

\begin{dok}
If $\mM$ is perfect, then  $\dim\bigoplus_i  H^i(\mM(X))<\infty,$ because $\prf\dA$ is proper.

Assume now that $\dim\bigoplus_i  H^i(\mM(X))<\infty.$ This implies that $\dim\bigoplus_i  H^i(\dHom(\mP, \mM))<\infty$
for any $\mP\in \prfdg\dA.$
Therefore, the module $\mM$ gives the DG functor
$\dHom(-, \mM)$ from  $\prfdg\dA$ to $\prfdg\kk.$ By Theorem \ref{saturated} the induced functor
$\Hom(-, \mM): \prf\dA\to\prf\kk$ is represented by an object $\mN\in\prf\dA$ and there is a canonical map
$\mN\to\mM.$ The cone $C$ of this map in $\D(\dA)$ is an object such that $\Hom(X, C)=0$ for any $X\in\dA.$
This implies that $C=0$ because $\Ob\dA$ is a set of compact generators in $\D(\dA).$
Thus, $\mM$ is a perfect complex.
\end{dok}
\begin{corollary}\label{representable}
Let $\dA$ be a regular and proper pretriangulated DG category for which $\Ho(\dA)$ is idempotent complete. Let $\mM$ be a DG $\dA$\!--module such that
$\dim\oplus_i  H^i(\mM(X))<\infty$
for all $X\in\dA.$
Then $\mM$ is quasi-isomorphic to a representable module $\mh^Y=\dHom(-, Y)$ for some $Y\in\dA.$
\end{corollary}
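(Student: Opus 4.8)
The plan is to deduce Corollary~\ref{representable} directly from Theorem~\ref{dg_saturated} together with Remark~\ref{perfect}. First I would apply Theorem~\ref{dg_saturated} to the DG module $\mM$: since $\dA$ is regular and proper, the hypothesis $\dim\bigoplus_i H^i(\mM(X))<\infty$ for all $X\in\dA$ immediately gives that $\mM$ is a perfect DG $\dA$\!--module, i.e. $\mM\in\prfdg\dA.$

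Next I would use the fact that $\dA$ is pretriangulated and $\Ho(\dA)$ is idempotent complete. By Remark~\ref{perfect}, under these hypotheses the Yoneda DG functor $\mh\colon\dA\to\prfdg\dA$ is a quasi-equivalence. Hence every object of $\prfdg\dA$ is, up to homotopy equivalence, in the essential image of $\mh$, so the perfect module $\mM$ is homotopy equivalent --- in particular quasi-isomorphic --- to a representable module $\mh^Y=\dHom_{\dA}(-,Y)$ for some $Y\in\dA.$ This is the whole argument; there is essentially no obstacle, as both ingredients are already in place, but the one point that must be checked is that the input hypothesis of Theorem~\ref{dg_saturated} is literally the same finiteness condition assumed here (it is), so that the reduction to perfectness is legitimate.

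One could also phrase the last step without invoking Remark~\ref{perfect} explicitly: the identity functor realizes $\Ho(\dA)\hookrightarrow\prf\dA$ as the inclusion of $\tr(\dA)$ into its idempotent completion, and since $\Ho(\dA)$ is already idempotent complete this inclusion is an equivalence; thus the perfect module $\mM$ corresponds to an object of $\Ho(\dA)$, i.e. is quasi-isomorphic to some $\mh^Y$. Either way, the corollary follows with no new computation.
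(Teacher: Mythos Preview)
Your proof is correct and matches the paper's own argument exactly: apply Theorem~\ref{dg_saturated} to conclude that $\mM$ is perfect, then invoke Remark~\ref{perfect} to identify it with a representable module via the Yoneda quasi-equivalence. The paper's proof is the single sentence ``It directly follows from the previous theorem and Remark~\ref{perfect},'' which is precisely what you have spelled out.
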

\begin{proof}
It directly follows from the previous theorem and Remark \ref{perfect}.
\end{proof}

The properties of regularity and properness behave well under taking semi-orthogonal summands and gluing.
\begin{proposition}\label{reg_prop}
Let $\T$ be a $\kk$\!--linear triangulated category with a semi-orthogonal decomposition
$\T= \langle\T_1, \T_{2}\rangle.$ The following properties hold
\begin{enumerate}
\item[1)] if $\T$ is proper, then $\T_i$ are proper;
\item[2)] if $\T$ is regular, then $\T_i$ are regular;
\item[3)] if $\T_i,\; i=1,2$ are regular, then $\T$ is regular too.
\end{enumerate}
\end{proposition}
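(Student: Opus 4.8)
The plan is to prove the three statements by exploiting the basic properties of the projection and inclusion functors associated with a semi-orthogonal decomposition $\T = \langle \T_1, \T_2\rangle$, namely that $\T_2$ is right admissible (so the quotient $\T/\T_2 \simeq \T_1$), that $\T_1$ is left admissible, and that every object $Z \in \T$ sits in an exact triangle $Z_2 \to Z \to Z_1$ with $Z_i \in \T_i$. I would treat properness, the "descends to summands" direction of regularity, and the "glues up" direction of regularity in turn.

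For part 1), properness of $\T$ immediately gives properness of each $\T_i$: since $\T_i$ is a full subcategory of $\T$, for any $X, Y \in \T_i$ we have $\bigoplus_{m} \Hom_{\T_i}(X, Y[m]) = \bigoplus_{m}\Hom_{\T}(X, Y[m])$, which is finite dimensional by hypothesis. For part 2), suppose $\T$ is regular with strong generator $G$, say $\langle G\rangle_n = \T$. Let $i_2\colon \T_2 \hookrightarrow \T$ be the inclusion and $i_2^!\colon \T \to \T_2$ its right adjoint; then $\T_2 \cong \T_2/0 = {}^{\perp}\T_0 \cap \T_2$ and, more usefully, $i_2^!$ is a triangulated functor commuting with shifts, finite sums, and direct summands, and it sends $G$ to an object $G_2 := i_2^!(G) \in \T_2$. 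Because $i_2^!$ restricted to $\T_2$ is the identity and $i_2^!$ preserves the operations $*$, $\langle - \rangle$, and $\diamond$, applying $i_2^!$ to the chain of inclusions $\langle G\rangle_k \subset \T$ yields $\T_2 = i_2^!(\T) = i_2^!(\langle G\rangle_n) \subseteq \langle i_2^!(G)\rangle_n = \langle G_2\rangle_n$, so $G_2$ is a strong generator of $\T_2$. The same argument with the left adjoint to $\T_1 \hookrightarrow \T$ — equivalently, with the quotient functor $\T \to \T/\T_2 \simeq \T_1$, which also preserves all the relevant operations and is essentially surjective — produces a strong generator of $\T_1$ from the image of $G$.

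For part 3), assume $G_i$ is a strong generator of $\T_i$, with $\langle G_i\rangle_{n_i} = \T_i$ inside $\T_i$; since $\T_i \subset \T$ is full and triangulated and closed under the envelope operations, the same equalities hold computed inside $\T$. Set $G := G_1 \oplus G_2 \in \T$. Given any $Z \in \T$, choose the decomposition triangle $Z_2 \to Z \to Z_1 \to Z_2[1]$ with $Z_i \in \T_i$. Then $Z_i \in \langle G_i\rangle_{n_i} \subseteq \langle G\rangle_{n_i}$, and since $Z$ is an extension of $Z_1$ by $Z_2$, i.e. $Z \in \langle G\rangle_{n_2} * \langle G\rangle_{n_1}$, we get $Z \in \langle \langle G\rangle_{n_2} * \langle G\rangle_{n_1}\rangle$. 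Taking $N := \max(n_1, n_2)$ and using that $\langle G\rangle_k \subseteq \langle G\rangle_{k'}$ for $k \le k'$, one concludes $Z \in \langle G\rangle_{2N}$; as $Z$ was arbitrary, $\langle G\rangle_{2N} = \T$, so $G$ is a strong generator and $\T$ is regular.

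The only genuinely delicate point, and the step I expect to require the most care, is the bookkeeping in part 2): one must check that the adjoint (resp. quotient) functor really does commute with the generation operations $*$, $\langle-\rangle$, and $\diamond$ in the strong sense that $F(\langle G\rangle_k) = \langle F(G)\rangle_k$ when $F$ is a quotient functor, or that $F(\langle G\rangle_k) \subseteq \langle F(G)\rangle_k$ with equality after restriction to the relevant subcategory when $F$ is an adjoint projection. This is where the fact that $\T_i \hookrightarrow \T$ is a section of the respective projection is used: the projection is the identity on $\T_i$, hence no loss occurs. Everything else is a routine manipulation of triangles and the definitions of $\langle X\rangle_k$ given in the preliminaries.
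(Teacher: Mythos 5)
Your proof is correct and follows essentially the same route as the paper: part 1) by fullness, part 2) by pushing a strong generator through the projection (quotient/adjoint) functors onto the $\T_i$, and part 3) by taking $G_1\oplus G_2$ and using the decomposition triangle together with $\langle G\rangle_{k}\diamond\langle G\rangle_{l}\subseteq\langle G\rangle_{k+l}$. The only cosmetic difference is your bound $2\max(n_1,n_2)$ in place of the paper's sharper $n_1+n_2$, which does not affect the conclusion.
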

\begin{dok}
1) is evident, since any subcategory of a proper category is proper.
To prove 2) we should note that there are quotient functors from $\T$ to $\T_i.$ Now it is evident that the images of a strong generator
under these functors are strong generators in $\T_i.$

Let $E_i$ be strong generators of $\T_i$ such that $\langle E_i\rangle_{n_i}=\T_i.$ We can take $E=E_1\oplus E_2.$
There are embeddings $\langle E\rangle_{n_i}\supset \langle E_i\rangle_{n_i}=\T_i, i=1,2.$ By definition of
a semi-orthogonal decomposition, for any object $X\in\T$ there is an exact triangle of the form $X_2\to X\to X_1$
with $X_i\in\T_i.$ This implies that $X\in\langle E\rangle_{n_2}\diamond\langle E\rangle_{n_1}=\langle E\rangle_{n_1+n_2}.$
Hence $\T=\langle E\rangle_{n_1+n_2}.$ This proves 3).
\end{dok}
\begin{remark}
{\rm
The proof implies inequality $\dim\T\le \dim \T_1+\dim\T_2 +1.$
}
\end{remark}

\begin{proposition}
Let $\dA$ and $\dB$ be two small pretriangulated DG categories and let $\mS$ be a $\dB\hy\dA$\!--bimodule.
Then the following conditions are equivalent:
\begin{enumerate}
\item the  gluing $\dA\underset{\mS}{\oright}\dB$ is regular and proper,
\item $\dA$ and $\dB$ are regular and proper and  $\dim\bigoplus_i  H^i(\mS(Y, X))<\infty$
for all $X\in\dA, Y\in\dB.$
\end{enumerate}
\end{proposition}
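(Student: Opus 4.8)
The plan is to prove the two implications separately, using the semi-orthogonal decomposition $\Ho(\dA\underset{\mS}{\oright}\dB)=\langle\Ho(\dA),\Ho(\dB)\rangle$ from Proposition \ref{dg_semiorhtogonal} together with Proposition \ref{reg_prop} and the characterization of perfect modules over regular proper DG categories (Theorem \ref{dg_saturated}).

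First I would treat the implication $(1)\Rightarrow(2)$. If $\dC=\dA\underset{\mS}{\oright}\dB$ is regular and proper, then by Proposition \ref{dg_semiorhtogonal} its homotopy category has the semi-orthogonal decomposition $\langle\Ho(\dA),\Ho(\dB)\rangle$, and parts 1) and 2) of Proposition \ref{reg_prop} immediately give that $\Ho(\dA)$ and $\Ho(\dB)$, hence $\dA$ and $\dB$, are regular and proper. It remains to bound the cohomology of $\mS$. For fixed $X\in\dA$ and $Y\in\dB$, in $\dC$ one has $\dHom_{\dC}(\ma^*X,\mb^*Y)\cong\mS(Y,X)$ by the very definition of the upper triangular category (Definition \ref{upper_tr}), and since $\ma^*,\mb^*$ are fully faithful DG functors this is the morphism complex between two objects of $\dC$; properness of $\dC$ (in the form of Remark \ref{finitness}) then forces $\dim\bigoplus_i H^i(\mS(Y,X))<\infty$.

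Next the implication $(2)\Rightarrow(1)$. Regularity of $\dC$ follows from part 3) of Proposition \ref{reg_prop} applied to the decomposition $\langle\Ho(\dA),\Ho(\dB)\rangle$, once we know $\Ho(\dA)$ and $\Ho(\dB)$ are regular, which is hypothesis (2); note we also get idempotent completeness of $\Ho(\dC)=\prf(\dA\underset{\mS}{\with}\dB)$ for free. For properness, the point is that $\prf\dC$ is classically generated by $\Ob\dA\sqcup\Ob\dB$, so by Remark \ref{finitness} it suffices to check finite-dimensionality of $\bigoplus_i H^i(\dHom_{\dC}(Z,W))$ for $Z,W$ ranging over these generating objects. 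There are four cases: $Z,W\in\dA$ gives $\dHom_{\dA}(Z,W)$, finite by properness of $\dA$; $Z,W\in\dB$ gives $\dHom_{\dB}(Z,W)$, finite by properness of $\dB$; $Z\in\dA,W\in\dB$ gives $\mS(W,Z)$, finite by hypothesis; and $Z\in\dB,W\in\dA$ gives the zero complex. Hence $\dC$ is proper.

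The main obstacle, and the place where a little care is needed, is the step in $(2)\Rightarrow(1)$ where I reduce properness of $\prf\dC$ to a finiteness statement on the morphism complexes of the small DG category $\dA\underset{\mS}{\with}\dB$: one must invoke that $\dC=(\dA\underset{\mS}{\with}\dB)^{\ptr}$ has $\Ob(\dA\underset{\mS}{\with}\dB)$ as a set of classical generators of $\tr(\dC)$, and that properness is inherited from generators (Remark \ref{finitness}); similarly in $(1)\Rightarrow(2)$ one uses Proposition \ref{reg_prop} which needs $\Ho(\dC)$ to be Karoubian, so one should observe that passing between $\tr$ and $\prf$ does not affect regularity or properness (the Remark following Definition \ref{reg_and_prop}). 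Everything else is a direct bookkeeping of the four entries of the Hom-matrix in Definition \ref{upper_tr}.
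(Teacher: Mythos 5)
Your proposal is correct and follows essentially the same route as the paper: the semi-orthogonal decomposition of Proposition \ref{dg_semiorhtogonal} combined with Proposition \ref{reg_prop} for regularity in both directions, and Remark \ref{finitness} (reduction of properness to the generating objects, where the Hom-complexes are exactly the four entries of the upper triangular matrix) for properness. The only cosmetic slip is the identification $\Ho(\dC)=\prf(\dA\underset{\mS}{\with}\dB)$ --- it is $\tr$ of the upper triangular category rather than its idempotent completion --- but as you note yourself this is harmless since regularity and properness of $\tr$ and $\prf$ hold simultaneously.
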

\begin{dok}
(1)$\Rightarrow$(2). Since $\Ho(\dA\underset{\mS}{\oright}\dB)=\langle \Ho(\dA), \Ho(\dB)\rangle$
regularity and properness of $\dA$ and $\dB$ directly follow from Proposition \ref{reg_prop} 1) and 2).
Properness of $\dA\underset{\mS}{\oright}\dB$ implies that $\dim\bigoplus_i  H^i(\mS(Y, X))<\infty$ as well.

(2)$\Rightarrow$(1). Regularity of the gluing follows from 3) of Proposition \ref{reg_prop} and Proposition \ref{dg_semiorhtogonal}.
In view of Remark \ref{finitness} properness of $\dA\underset{\mS}{\oright}\dB$ directly follows from the properness of $\dA$ and $\dB$ and the finiteness of $\mS.$
\end{dok}

There is another important property of DG categories that is called smoothness.
\begin{definition}
A small $\kk$\!--linear DG category $\dA$ is called $\kk$\!-smooth if it is perfect as the module over  $\dA^{\op}\otimes\dA.$
\end{definition}
This property depends on the base field $\kk.$ For example, a finite inseparable
extension $F\supset\kk$ is not smooth over $\kk$ and it is smooth over itself.

The following statement is proved in \cite{Lu} see Lemmas 3.5. and 3.6.
\begin{proposition}\label{smooth_regular}
If a small DG category $\dA$ is smooth, then it is regular.
\end{proposition}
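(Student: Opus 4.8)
The plan is to deduce regularity of a smooth DG category $\dA$ from the definition of smoothness, which says that the diagonal bimodule $\dA$ (viewed as a DG module over $\dA^{\op}\otimes\dA$) is perfect. The key observation is that a perfect $\dA^{\op}\otimes\dA$-module is, up to homotopy, a direct summand of a finitely generated semi-free module built from representable bimodules $\mh^{(Y,Z)} = \dHom_{\dA^{\op}}(-,Y)\otimes\dHom_{\dA}(-,Z)$; and one should identify the tensor functor $-\otimes_{\dA}(\mh^{(Y,Z)})$, which sends a module $\mM$ to (roughly) $\mM(Y)\otimes\mh^{Z}$, i.e. to a sum of shifts of the representable $\mh^{Z}$. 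The upshot is that the identity functor on $\D(\dA)$, which is computed by tensoring with the diagonal bimodule, is a retract of a functor built in finitely many steps from functors whose image lands in sums of shifts of a single object $G=\bigoplus_{Z}\mh^{Z}$ (the sum over the finitely many objects $Z$ appearing in the chosen semi-free resolution).

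First I would fix a finite semi-free filtration $0=\mPhi_0\subset\mPhi_1\subset\cdots\subset\mPhi_m=\mP$ of a bounded complex $\mP$ with $\mP\oplus\mP'\simeq\dA$ in $\D(\dA^{\op}\otimes\dA)$, each $\mPhi_{i+1}/\mPhi_i$ a finite sum of shifted representables $\mh^{(Y_j,Z_j)}$. Let $\mathcal{Z}$ be the finite set of all second coordinates $Z_j$ occurring, and put $G=\bigoplus_{Z\in\mathcal{Z}}\mh^{Z}\in\prfdg\dA$. Next I would argue, by induction on the length $i$ of the filtration, that for every DG $\dA$-module $\mM$ the module $\mM\otimes_{\dA}\mPhi_i$ lies in $\widebar{\langle G\rangle}_i$ (the $i$-fold cocomplete envelope of $G$ inside $\D(\dA)$), using that $-\otimes_{\dA}\mh^{(Y,Z)}$ applied to $\mM$ yields $\mM(Y)\otimes_{\kk}\mh^{Z}$, a coproduct of shifts of $\mh^Z$, hence an object of $\widebar{\langle G\rangle}_1$, together with the fact that the extensions $\mPhi_{i}\to\mPhi_{i+1}\to\mPhi_{i+1}/\mPhi_i$ give triangles after $\mM\otimes_{\dA}-$. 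Applying this with $i=m$ and using that $\mM$ itself is $\mM\otimes_{\dA}\dA$, a retract of $\mM\otimes_{\dA}\mP\in\widebar{\langle G\rangle}_m$, and that $\widebar{\langle G\rangle}_m$ is closed under direct summands, I get $\mM\in\widebar{\langle G\rangle}_m$ for every $\mM\in\D(\dA)$; in particular $\widebar{\langle G\rangle}_m=\D(\dA)$.

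Finally I would invoke Proposition \ref{compact_gen}: since $G$ is a compact object of the cocomplete triangulated category $\D(\dA)$ and $\widebar{\langle G\rangle}_m=\D(\dA)$, we get $\langle G\rangle_m=\D(\dA)^c=\prf\dA$, so $G$ is a strong generator of $\prf\dA$ and $\dA$ is regular by Definition \ref{reg_and_prop}. The main obstacle I anticipate is purely bookkeeping: carefully tracking the way the bar-construction / semi-free filtration of the diagonal bimodule interacts with the tensor functor and making sure the number of steps is controlled by the (finite) filtration length rather than blowing up — i.e. verifying the inductive claim $\mM\otimes_{\dA}\mPhi_i\in\widebar{\langle G\rangle}_i$ cleanly, including the base case where one must see that $\mM(Y)\otimes_{\kk}\mh^Z$ really is an arbitrary coproduct of shifts of $\mh^Z$ (here $\mM(Y)$ is just a complex of $\kk$-vector spaces, so this is a coproduct of shifted copies of $\mh^Z$) and hence lies in $\widebar{\langle G\rangle}_1$. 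Everything else is formal manipulation with triangulated envelopes and compactness.
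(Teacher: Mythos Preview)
The paper does not give its own proof of this proposition; it simply cites \cite{Lu}, Lemmas 3.5 and 3.6. Your argument is correct and is exactly the standard one (and is essentially what Lunts does): use perfection of the diagonal bimodule to write it as a summand of a finitely generated semi-free bimodule $\mP$, observe that tensoring with a representable bimodule $\mh_Y\otimes\mh^Z$ sends any $\mM$ to $\mM(Y)\otimes_{\kk}\mh^Z\in\widebar{\langle G\rangle}_1$, induct on the filtration length, and then pass to compact objects via Proposition~\ref{compact_gen}.

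One small slip: you wrote $\mP\oplus\mP'\simeq\dA$, but the direction is reversed --- perfection of the diagonal means $\dA$ is a retract of $\mP$, i.e.\ $\mP\simeq\dA\oplus\mP'$ in $\D(\dA^{\op}\otimes\dA)$, so that $\mM\cong\mM\stackrel{\bL}{\otimes}_{\dA}\dA$ is a summand of $\mM\stackrel{\bL}{\otimes}_{\dA}\mP\in\widebar{\langle G\rangle}_m$. With that correction the argument is complete.
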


Smoothness is invariant under Morita equivalence \cite{Lu, LS}. This means that if $\D(\dA)$ and $\D(\dB)$
are equivalent through a functor of the form $(-)\stackrel{\bL}{\otimes_{\dA}}\mT,$ where
$\mT$ is an $\dA\hy\dB$\!--bimodule, then $\dA$ is smooth if and only if $\dB$ is smooth.

Since $\dA\underset{\mS}{\with}\dB$ and $\dA\underset{\mS}{\oright}\dB$ are Morita equivalent we obtain that
smoothness of $\dA\underset{\mS}{\with}\dB$ and $\dA\underset{\mS}{\oright}\dB$ hold simultaneously.
Further, we can compare smoothness of a gluing with smoothness of summands. We get the following.
\begin{theorem}\cite[3.24]{LS}\label{smooth_glue}
Let $\dA$ and $\dB$ be two small pretriangulated DG categories over a field $\kk$ and let $\mS$ be a $\dB^{\op}\otimes\dA$\!--module.
Then the following conditions are equivalent:
\begin{enumerate}
\item the gluing $\dA\underset{\mS}{\oright}\dB$ is smooth;
\item $\dA$ and $\dB$ are smooth and $\mS$ is a perfect $\dB^{\op}\otimes\dA$\!--module.
\end{enumerate}
\end{theorem}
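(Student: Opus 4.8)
The plan is to reduce the statement to the upper triangular category and then dissect the diagonal bimodule according to the block decomposition coming from $\Ob\dC=\Ob\dA\sqcup\Ob\dB.$ Since $\dA\underset{\mS}{\with}\dB$ and $\dA\underset{\mS}{\oright}\dB$ are Morita equivalent and smoothness is a Morita invariant, it suffices to decide when $\dC:=\dA\underset{\mS}{\with}\dB$ is perfect as a $\dC^{\op}\otimes\dC$\!--module. Writing $\dA,\dB$ also for the diagonal bimodules of the factors, Definition~\ref{upper_tr} shows that in the induced $2\times2$ block decomposition of $\dC$\!--bimodules the diagonal bimodule has the shape $\left(\begin{smallmatrix}\dA&\mS\\0&\dB\end{smallmatrix}\right)$ and that its strictly upper triangular part $\widehat\mS$ (the $\mS$\!--block, extended by zero) is a sub-bimodule. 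Hence there is a short exact sequence of $\dC$\!--bimodules
\[
0\lto\widehat\mS\lto\dC\lto\widehat\dA\oplus\widehat\dB\lto 0,
\]
where $\widehat\dA,\widehat\dB$ are the diagonal bimodules of $\dA,\dB$ extended by zero; this is the backbone triangle.

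The only non-formal ingredient is to locate the \emph{sieves} inside $\dC^{\op}\otimes\dC.$ The upper triangular structure forces $\dHom_\dC(Y,X)=0$ for $Y\in\dB,\ X\in\dA,$ so the full subcategory carrying $\widehat\mS$ — canonically $\dB^{\op}\otimes\dA$ — receives no nonzero morphisms from the other blocks, i.e. it is a sieve. Along a sieve inclusion $\jmath$ the left and right Kan extensions both coincide with extension by zero; so $\jmath_!\cong\jmath_*$ commutes with coproducts, whence the restriction $\jmath^{*}$ preserves perfect (= compact) modules, while $\jmath_!$ is fully faithful, commutes with coproducts, and therefore preserves \emph{and} reflects perfectness. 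As $\widehat\mS=\jmath_!\mS$ and $\jmath^{*}\dC=\mS,$ this already yields two facts: $\widehat\mS$ is perfect over $\dC^{\op}\otimes\dC$ iff $\mS$ is perfect over $\dB^{\op}\otimes\dA;$ and if $\dC$ is smooth then $\mS$ is perfect. For the (non-sieve) full embedding $\iota_\dA\colon\dA^{\op}\otimes\dA\hookrightarrow\dC^{\op}\otimes\dC$ the induction $\iota_{\dA!}$ still preserves perfectness (Propositions~\ref{pre-tr_equivalence}, \ref{Keller2}) and, being fully faithful and coproduct preserving, also reflects it; a short density/co-Yoneda computation identifies the $\dB^{\op}\otimes\dA$\!--block of $\iota_{\dA!}\dA$ with $\mS$ and shows it sits as a sieve inside the support of $\iota_{\dA!}\dA,$ giving a triangle $\widehat\mS\to\iota_{\dA!}\dA\to\widehat\dA\to\widehat\mS[1].$ Symmetrically one gets $\widehat\dB\to\iota_{\dB!}\dB\to\widehat\mS\to\widehat\dB[1].$

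Both implications are then short diagram chases via the two-out-of-three principle for perfect objects in triangles. For $(2)\Rightarrow(1)$: smoothness of $\dA$ and $\dB$ means $\dA,\dB$ are perfect over $\dA^{\op}\otimes\dA,\ \dB^{\op}\otimes\dB,$ so $\iota_{\dA!}\dA$ and $\iota_{\dB!}\dB$ are perfect; perfectness of $\mS$ makes $\widehat\mS$ perfect; the two auxiliary triangles then make $\widehat\dA,\widehat\dB$ perfect; and the backbone triangle makes $\dC$ perfect, so the gluing is smooth. For $(1)\Rightarrow(2)$: smoothness of $\dC$ gives $\mS$ perfect, hence $\widehat\mS$ perfect, by the sieve argument; the backbone triangle then gives $\widehat\dA\oplus\widehat\dB$ perfect, so $\widehat\dA$ and $\widehat\dB$ are perfect as direct summands; the auxiliary triangles give $\iota_{\dA!}\dA$ and $\iota_{\dB!}\dB$ perfect; and since $\iota_{\dA!},\iota_{\dB!}$ reflect perfectness, $\dA$ and $\dB$ are perfect over their enveloping categories, i.e. smooth.

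I expect the main obstacle to be precisely the sieve bookkeeping of the second paragraph: checking rigorously which full subcategories of $\dC^{\op}\otimes\dC$ are sieves in the domain-closed sense, exploiting that along a sieve inclusion extension by zero is simultaneously both adjoints of restriction — so that, contrary to general restriction functors, $\jmath^{*}$ preserves perfect modules — and computing the off-diagonal blocks of $\iota_{\dA!}\dA$ and $\iota_{\dB!}\dB$ correctly via the density formula. Once these are in place nothing further is needed — not even properness or any finiteness — and the remainder is the elementary calculus of perfect objects in triangles.
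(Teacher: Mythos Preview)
The paper does not prove this theorem; it merely cites \cite[3.24]{LS}. So there is no paper proof to compare against, and I will assess your argument on its own.

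Your overall architecture is correct and is essentially the one used in \cite{LS}: reduce to $\dC=\dA\underset{\mS}{\with}\dB,$ use the backbone triangle $\widehat\mS\to\dC\to\widehat\dA\oplus\widehat\dB,$ compute $\iota_{\dA!}\dA$ and $\iota_{\dB!}\dB$ block by block to get the auxiliary triangles, and run two-out-of-three. The $(2)\Rightarrow(1)$ direction goes through exactly as you wrote (modulo the harmless rotation: the second auxiliary triangle is $\widehat\mS\to\iota_{\dB!}\dB\to\widehat\dB,$ not the order you gave).

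There is, however, a genuine gap in your $(1)\Rightarrow(2)$ argument. You assert that along a sieve inclusion $\jmath$ (domain-closed: no nonzero morphisms into $\dB^{\op}\otimes\dA$ from outside) \emph{both} Kan extensions coincide with extension by zero, and conclude that restriction $\jmath^{*}$ preserves perfect modules. This is false: for a sieve only the \emph{left} Kan extension $\jmath_{!}$ is extension by zero; the right one $\jmath_{*}$ is not, because there \emph{are} morphisms out of $\dB^{\op}\otimes\dA$ to the other blocks. Concretely, restricting the representable $\mh^{(Z,W)}_{\dC^{e}}$ to $\dB^{\op}\otimes\dA$ gives $(Y,X)\mapsto\dHom_\dC(Z,Y)\otimes\dHom_\dC(X,W),$ which for $Z\in\dA,$ $W\in\dB$ equals $\mS(Y,Z)\otimes\mS(W,X)$ --- not perfect unless $\mS$ already has good finiteness. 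So ``$\jmath^{*}\dC=\mS$ and $\jmath^{*}$ preserves perfects'' cannot be the first step; it presupposes what you want to prove.

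The fix is to restrict along a \emph{cosieve} instead, where the right Kan extension \emph{is} extension by zero and hence restriction does preserve perfects. The subcategory $\dA^{\op}\otimes\dC\subset\dC^{e}$ is a cosieve (since $\dA^{\op}\subset\dC^{\op}$ is), and restricting the diagonal there yields $(X,W)\mapsto\dHom_\dC(W,X),$ which is precisely the extension by zero of the diagonal $\dA$-bimodule along the \emph{sieve} $\dA^{e}\hookrightarrow\dA^{\op}\otimes\dC.$ Since that sieve-induction is fully faithful and coproduct-preserving, it reflects perfectness, so $\dA$ is smooth; symmetrically $\dB$ is smooth via the cosieve $\dC^{\op}\otimes\dB.$ Now $\iota_{\dA!}\dA$ and $\iota_{\dB!}\dB$ are perfect, and the octahedron combining your backbone and auxiliary triangles (equivalently, the triangle $\widehat\mS\to\iota_{\dA!}\dA\oplus\iota_{\dB!}\dB\to\dC$) gives $\widehat\mS$ perfect; finally $\jmath_{!}$ reflects perfectness, so $\mS$ is perfect. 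With this correction your outline becomes a complete proof.
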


\subsection{Regularity, smoothness, and properness in commutative geometry}
Let us now discuss all these properties of DG categories in context of the usual geometry of schemes.

\begin{proposition}\label{proper}
Let $X$ be a proper scheme. Then the category $\prf\!X$ is proper.
\end{proposition}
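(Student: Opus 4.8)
The plan is to show that for any two perfect complexes $\mathcal{F}^{\cdot}, \mathcal{G}^{\cdot}$ on a proper scheme $X$, the graded vector space $\bigoplus_{m\in\ZZ}\Hom_{\prf X}(\mathcal{F}^{\cdot}, \mathcal{G}^{\cdot}[m])$ is finite-dimensional over $\kk$. First I would observe that, since $X$ is noetherian and separated, the category $\prf X$ sits inside $\D^b(\coh X)$: a perfect complex is in particular a cohomologically bounded complex with coherent cohomology, using the identification of $\D^b(\coh X)$ with $\D^b(\Qcoh X)_{\coh}$ recalled in Section \ref{3.1}. Hence it suffices to prove the stronger statement that $\bigoplus_{m}\Hom_{\D^b(\coh X)}(\mathcal{F}^{\cdot},\mathcal{G}^{\cdot}[m])$ is finite-dimensional for all $\mathcal{F}^{\cdot},\mathcal{G}^{\cdot}\in\D^b(\coh X)$.

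The key tool is that $\Hom_{\D^b(\coh X)}(\mathcal{F}^{\cdot},\mathcal{G}^{\cdot}[m]) = H^m\bigl(\bR\Hom_{\cO_X}(\mathcal{F}^{\cdot},\mathcal{G}^{\cdot})\bigr)$, and that there is a local-to-global spectral sequence
\[
E_2^{p,q} = H^p\bigl(X, \EXT^q_{\cO_X}(\mathcal{F}^{\cdot},\mathcal{G}^{\cdot})\bigr) \Longrightarrow \Hom_{\D^b(\coh X)}(\mathcal{F}^{\cdot},\mathcal{G}^{\cdot}[p+q]).
\]
So the two steps are: (i) show the sheaves $\EXT^q_{\cO_X}(\mathcal{F}^{\cdot},\mathcal{G}^{\cdot})$ are coherent and vanish for all but finitely many $q$; (ii) invoke the theorem that cohomology of a coherent sheaf on a proper scheme is finite-dimensional (Serre's finiteness / Grothendieck's coherence theorem for proper morphisms applied to $X\to\Spec\kk$), together with the vanishing $H^p(X,-)=0$ for $p$ outside $[0,\dim X]$. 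Then the spectral sequence has finitely many nonzero $E_2$-terms, each finite-dimensional, so every abutment term is finite-dimensional and all but finitely many vanish.

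For step (i), I would reduce to the case where $\mathcal{F}^{\cdot}$ is a bounded complex of vector bundles. Here it is cleanest to use perfectness directly: since $\mathcal{F}^{\cdot}\in\prf X$ it is locally quasi-isomorphic to a bounded complex of free modules of finite rank, so $\bR\Hom_{\cO_X}(\mathcal{F}^{\cdot},\mathcal{G}^{\cdot})$ is computed locally by a bounded complex built from finite copies of $\mathcal{G}^{\cdot}$ (whose cohomology is coherent and bounded), hence has coherent and bounded cohomology sheaves; these cohomology sheaves are exactly the $\EXT^q_{\cO_X}(\mathcal{F}^{\cdot},\mathcal{G}^{\cdot})$. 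This also explains why it is convenient that at least one argument is perfect — for a general pair in $\D^b(\coh X)$ over a non-regular $X$ one would need a separate argument, but for $\prf X$ this local resolution is available. Note we need $\mathcal{G}^{\cdot}$ only to be in $\D^b(\coh X)$, not perfect, which matches the asymmetry in the intended application.

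The main obstacle, to the extent there is one, is step (i): being careful that the local finite free resolutions of $\mathcal{F}^{\cdot}$ glue to give genuinely coherent global $\EXT$-sheaves with a uniform bound on the range of nonvanishing $q$. Once that boundedness and coherence is in hand, step (ii) is a direct citation of the finiteness of coherent cohomology on a proper scheme, and the spectral sequence argument is routine. An alternative, avoiding the spectral sequence, would be to run an induction on the length of $\mathcal{F}^{\cdot}$: the statement is clearly true when $\mathcal{F}^{\cdot}$ is a single vector bundle (then $\bigoplus_m\Hom(\mathcal{F}^{\cdot},\mathcal{G}^{\cdot}[m])=\bigoplus_m H^m(X,\mathcal{F}^{\vee}\otimes\mathcal{G}^{\cdot})$, finite by coherent finiteness), and the general case follows by devissage using the exact triangles expressing $\mathcal{F}^{\cdot}$ in terms of its (shifted) cohomology bundles together with the fact that a finite extension of finite-dimensional graded Hom-spaces is finite-dimensional. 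I would present whichever of these is shorter, likely the $\EXT$ spectral sequence version since it handles the reduction to bundles most transparently.
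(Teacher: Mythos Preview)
Your proof is correct and follows the same underlying idea as the paper: compute $\bR\Hom(\F^{\cdot},\G^{\cdot})$ as $\bR\pi_*\bR\lHom(\F^{\cdot},\G^{\cdot})$ for $\pi:X\to\Spec\kk$, control the inner $\bR\lHom$ using the local structure of perfect complexes, and then invoke a finiteness theorem for proper pushforward. The difference is only in packaging. The paper observes directly that $\bR\lHom(\E^{\cdot},\F^{\cdot})$ is again a perfect complex on $X$ whenever both arguments are perfect, and then cites \cite[III 4.8.1]{SGA6} (or \cite[2.5.4]{TT}) to the effect that $\bR\pi_*$ sends perfect complexes to perfect complexes for a proper morphism; perfectness over $\kk$ is exactly finite-dimensionality of total cohomology, so no spectral sequence is needed. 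Your version unpacks this one level down, showing only that $\bR\lHom$ lands in $\D^b(\coh X)$ and then using Grothendieck's coherence theorem plus the local-to-global Ext spectral sequence. What you gain is a slightly more general statement (only one argument needs to be perfect, the other merely in $\D^b(\coh X)$) and a more elementary toolkit; what the paper's route gains is brevity, since citing ``proper pushforward preserves perfects'' collapses your steps (i) and (ii) and the spectral-sequence bookkeeping into a single line.
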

\begin{dok}
Let $\E^{\cdot}$ be a perfect complex. Consider the functor $\bR\lHom(\E^{\cdot}, -)$ from $\D(\Qcoh X)$ to itself.
Since a perfect complex is locally quasi-isomorphic to a finite complex of vector bundles we obtain that
any object $\bR\lHom(\E^{\cdot}, \F^{\cdot})$ is perfect when $\E^{\cdot}$ and $\F^{\cdot}$ are perfect.
Let $\pi$ be the canonical morphism from $X$ to $\Spec\kk.$
By \cite[III 4.8.1]{SGA6} (see also  \cite[2.5.4]{TT}) since $X$ is proper the object $\bR \pi_* \E^{\cdot}$ is perfect over $\kk$
when $\E^{\cdot}$ is perfect. Hence, the complex
\[
\bR\Hom(\E^{\cdot}, \F^{\cdot})\cong \bR\pi_*\bR\lHom(\E^{\cdot}, \F^{\cdot})
\]
is a perfect complex of $\kk$\!-vector spaces, i.e. $\bigoplus_{k}\Hom(\E^{\cdot}, \F^{\cdot}[k])$ is finite dimensional.
\end{dok}

\begin{theorem}\label{regular}
Let $X$ be a separated noetherian scheme of finite Krull dimension  over an arbitrary filed $\kk.$
Assume that the square $X\times X$ is noetherian too. Then  the following conditions are equivalent:
\begin{enumerate}
\item $X$ is regular;
\item $\prf X$ is regular, i.e. it has a strong generator.
\end{enumerate}
\end{theorem}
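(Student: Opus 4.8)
The plan is to prove both implications, with the harder and more interesting direction being $(1)\Rightarrow(2)$, i.e.\ that regularity of $X$ forces $\prf X$ to have a strong generator.

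For the direction $(2)\Rightarrow(1)$ I would argue locally. Regularity of a noetherian scheme is a local property, so it suffices to show that each local ring $\cO_{X,x}$ has finite global dimension. If $\prf X$ has a strong generator $\mE$ with $\langle\mE\rangle_n=\prf X$, then for every closed point $x$ the residue skyscraper $k(x)$ lies in $\langle\mE\rangle_n$; localizing at $x$ and using that $\langle\mE\rangle_n$ is built in $n$ steps from shifts, sums and summands of $\mE$, one obtains a uniform bound on the projective dimension of $k(x)$ over $\cO_{X,x}$, hence $\gldim\cO_{X,x}<\infty$, so $\cO_{X,x}$ is regular by Serre's theorem. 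One must be slightly careful: localization sends $\prf X$ to $\prf\cO_{X,x}$ and $\mE$ to a generator there, and "generated in $n$ steps" is preserved, so $k(x)$ sits in $\langle\mE_x\rangle_n$; an object built in $n$ cones out of a bounded complex of finite projective modules has projective dimension bounded in terms of $n$ and the amplitude of $\mE_x$.

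For the direction $(1)\Rightarrow(2)$, the natural approach is to exhibit an explicit strong generator. Since $X$ is separated noetherian of finite Krull dimension, by \cite{Ne,BVdB} the category $\D(\Qcoh X)$ is compactly generated and $\prf X=\D(\Qcoh X)^c$ admits a compact (classical) generator $\mE$. By Proposition \ref{compact_gen}, to show $\mE$ is a strong generator of $\prf X$ it suffices to show $\widebar{\langle\mE\rangle}_k=\D(\Qcoh X)$ for some finite $k$, i.e.\ that the big category is generated \emph{in finitely many steps} (allowing arbitrary coproducts) from $\mE$. Here regularity enters: when $X$ is regular, every coherent sheaf has a finite resolution by locally free sheaves, and more importantly every object of $\D(\Qcoh X)$ can be reached from the structure sheaf (or a fixed ample generator, twisted) in a bounded number of steps. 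The cleanest route is to cite/reprove the Bondal--Van den Bergh argument (\cite[Thm.\ 3.1.4]{BVdB}): on a regular separated noetherian scheme of finite Krull dimension $d$, one shows $\dim\prf X\le d$ (or at least finite) by a Mayer--Vietoris/Čech induction on an affine cover, reducing to the affine regular case where a regular local ring of dimension $d$ has its residue field resolved in $d$ steps, so $\cO_X$ (or $k(x)$'s assembled) generates in boundedly many steps. Concretely: pick a finite affine open cover $X=\bigcup_{i=1}^m U_i$; on each affine piece $\operatorname{Spec}R_i$ with $R_i$ regular, $\D(\Qcoh)$ is generated from $R_i$ in $\dim R_i+1\le d+1$ steps; then a Čech/Mayer--Vietoris spectral sequence glues these, showing $X$ is generated from $\bigoplus_i j_{i!}\cO_{U_i}$ (or rather from $\mE$, after replacing this by the fixed generator up to a further bounded number of steps using that $\mE$ classically generates) in at most $m(d+1)$ steps, a finite bound.

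The main obstacle is making the gluing step precise: passing from "generated in finitely many steps on each affine open" to "generated in finitely many steps globally" requires controlling how many triangles the Čech assembly costs, and requires the hypothesis that $X\times X$ is noetherian only insofar as it guarantees the diagonal / the relevant functors behave well — in fact the cleanest formulation uses that the finite cover has a bounded-length Čech complex, so the number of steps is bounded by (number of opens) $\times$ (local bound). I would therefore organize $(1)\Rightarrow(2)$ as: (a) reduce via Proposition \ref{compact_gen} and the existence of a compact generator to bounding $\dim\D(\Qcoh X)$ in the "big" sense; (b) handle the affine regular case using finite global dimension; (c) run the finite Čech induction over an affine cover to get a global finite bound; then (d) translate back: the compact generator $\mE$ is strong because any classical generator of a category possessing \emph{some} strong generator is itself strong (Remark after Definition \ref{strong_gen}). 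For $(2)\Rightarrow(1)$, the argument is the short localization computation above. I expect step (c) — the quantitative Čech gluing — to be where all the real work lies, though it is essentially \cite[2.2.4, 3.1.4]{BVdB}, so much of it can be cited.
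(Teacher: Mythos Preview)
Your $(2)\Rightarrow(1)$ matches the paper's in spirit: the paper restricts to an affine open $U\subset X$ (every perfect complex on $U$ is a summand of one restricted from $X$, by \cite{Ne}) and invokes the affine case \cite[Cor.~8.4]{Chr}; localizing at a point is a harmless variant. For $(1)\Rightarrow(2)$, however, you take a different route from the paper and your sketch has a genuine gap. The paper does \emph{not} argue by \v{C}ech/Mayer--Vietoris gluing; it runs a diagonal argument. After establishing that $\Qcoh X$ has finite global dimension, it works on $X\times X$ --- this is precisely where the noetherian hypothesis on the square enters --- resolving the coherent sheaf $\cO_\Delta$ by finite sums of $\L_\alpha^{\otimes r}\boxtimes\L_\beta^{\otimes s}$ drawn from the ample family, brutally truncating, and using the resulting Fourier--Mukai kernel to exhibit every quasi-coherent sheaf $\F$ as a direct summand of an object of $\widebar{\langle\cS\rangle}_{l+1}$, where $\cS$ is a \emph{finite} sum of line bundles and hence compact. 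Rouquier's decomposition \cite[Prop.~7.22]{Ro} then passes from sheaves to arbitrary complexes, and Proposition~\ref{compact_gen} finishes.

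The gap in your \v{C}ech approach is this. First, $j_{i!}\cO_{U_i}$ is not quasi-coherent for an open immersion, so does not live in $\D(\Qcoh X)$; the correct \v{C}ech resolution uses $j_{I*}\cO_{U_I}$. But these pushforwards are not perfect (the pushforward of $\cO$ from a basic affine open is a localization, not finitely generated), so your candidate generator $G=\bigoplus_I j_{I*}\cO_{U_I}$ is not compact. The phrase ``replacing this by the fixed generator up to a further bounded number of steps using that $\mE$ classically generates'' is exactly where the argument breaks: classical generation by $\mE$ bounds only compact objects, and there is no a~priori finite $M$ with $j_{I*}\cO_{U_I}\in\widebar{\langle\mE\rangle}_M$ --- producing such a bound is essentially equivalent to what you are trying to prove. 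The diagonal trick avoids this by manufacturing a compact $\cS$ from the outset. Your instinct that the $X\times X$ hypothesis ought to be removable is correct (the paper remarks that Neeman later removed it), but not via the argument you sketch; and note that \cite[3.1.4]{BVdB} is itself a diagonal argument, not a \v{C}ech one.
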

\begin{dok}
At first, note that the affine case $X=\Spec A$ was treated in Corollary 8.4 of \cite{Chr} and the remark immediately following (see also \cite[7.25]{Ro}).
We will use it.

(2)$\Rightarrow$(1)
Take an affine open subset $U\subset X.$ Any perfect complex on $U$ is a direct summand of a perfect
complex restricted from $X$ (\cite[Lemma 2.6]{Ne}). Hence, the category $\prf U$ is strongly generated too.
Thus we have reduced to the affine case.
If $\prf A$ is strongly generated, then the algebra $A$ has  finite global dimension, i.e. it is regular.

(1)$\Rightarrow$(2) By \cite[II 2.2.7.1]{SGA6} any regular separated noetherian scheme has an ample family of line bundles, i.e.
there is a family of line bundles $\{\L_{\alpha}\}$ on $X$  such that for any quasi-coherent sheaf $\F,$ the evaluation map
\[
\bigoplus_{\alpha;\; n\ge 1}\Gamma(X, \F\otimes\L_{\alpha}^{\otimes n})\otimes \L_{\alpha}^{\otimes -n}\twoheadrightarrow
\F
\]
is an epimorphism.
In particular, for any coherent sheaf $\F$ there are an algebraic vector bundle $\E$ (i.e. locally free sheaf of finite type)
and an epimorphism $\E\twoheadrightarrow \F.$

Consider an affine covering $X=\bigcup_{i=1}^{m} V_i,$ where $V_i=\Spec A_i.$
Since $X$ is regular, all $A_i$ are regular noetherian algebras of finite dimension and, hence, they have finite global dimension.
This implies that for sufficiently large $n\in \ZZ$ (greater than maximum of global dimensions of $A_i$) for any quasi-coherent sheaf $\F$ there is a global locally free resolution
\begin{equation}\label{locally_free}
0\lto \E^{-n}\lto\cdots\lto\E^0\lto\F\lto 0.
\end{equation}
By \cite[1.4.12]{EGA3} (see also \cite[App. B]{TT}) there exists an integer $k\in \ZZ$ such that for all $p\ge k$
and for all quasi-coherent sheaves $\G,$ one has $\Ext^p(\E, \G)=H^p(X, \E^{\vee}\otimes \G)=0,$ where $\E$ is  locally free.
Using a locally free resolution of type (\ref{locally_free}) for a quasi-coherent sheaf $\F,$ one has that
for sufficiently large $N\in \ZZ$ for all $p\ge N$ and all quasi-coherent sheaves $\F, \G,$ we have
$\Ext^p(\F, \G)=0.$ Thus, the abelian category $\Qcoh(X)$ has a finite global dimension. Let us denote it by
$k=\mathrm{gl}.\dim\Qcoh(X).$

Consider the product $X\times_{\kk} X.$ It is known that the family $\{\L_{\alpha}^r\boxtimes\L_{\beta}^s| r,s\ge 1\}$
forms an ample family on $X\times X$ (see \cite[2.1.2.f]{TT}),  the scheme $X\times X$ not necessary being regular.

Take the structure sheaf $\cO_{\Delta}$ of the diagonal $\Delta\subset X\times X.$ Since $X$ is separated, $\Delta$ is closed. As $X\times X$ is noetherian, $\cO_{\Delta} $ is  a coherent sheaf. Fix an infinite locally free resolution $\E^{\cdot}$ of $\cO_{\Delta}$
\[
\cdots \lto \E^{-n}\lto\cdots\lto\E^0\lto\cO_{\Delta}\lto 0,
\]
where each $\E^{-i}$ is a finite direct sum of sheaves of the form $\L_{\alpha}^{\otimes r}\boxtimes\L_{\beta}^{\otimes s}.$
Take a brutal truncation $\sigma^{\ge -l}\E^{\cdot}$ for a sufficiently large $l\gg 0.$ It has only two cohomology sheaves
$H^{-l}(\sigma^{\ge -l}\E^{\cdot})$ and $H^0(\sigma^{\ge -l}\E^{\cdot})=\cO_{\Delta}.$
Take all $\L_{\alpha}^{\otimes n}$ that appear in $\E^{-i}$ for all $0\le i\le l$ and consider their direct sum. Denote it by
$\cS.$ We have that $\cS$ is an algebraic vector bundle and $\sigma^{\ge -l}\E^{\cdot}\in \langle \cS\boxtimes \cS\rangle_{l+1}.$

For any quasi-coherent sheaf $\F,$ the object $C=\bR\pr_{2*}(\pr_1^*(\F)\otimes \sigma^{\ge -l}\E^{\cdot})$ is a complex
on $X,$ all cohomology $H^j(C)$ of which are trivial when $j> -l+k$ except $H^0(C)$ that is isomorphic to $\F.$
Since $l$ is large enough, we obtain that $\F$ is a direct summand of $C.$ But $C$ belongs to $\widebar{\langle\cS\rangle}_{l+1}.$
Therefore, $\F\in  \widebar{\langle\cS\rangle}_{l+1}$ too.
Thus, we obtain that $\widebar{\langle\cS\rangle}_{l+1}$ contains all quasi-coherent sheaves.
Now we can apply the following proposition from \cite{Ro}.

\begin{proposition}\cite[Prop. 7.22]{Ro}
Let $\A$ be an abelian category of finite global dimension $k.$
Let $C$ be a complex of objects from $\A.$
 Then, there is a distinguished
triangle in $\D(\A)$
\[
\bigoplus_{i} D_i\lto C\lto \bigoplus_{i} E_i,
\]
where $D_i=\sigma^{\ge ki+1}(\tau^{\le k(i+1)-1} C)$ is a complex with zero terms outside
$[ki+1,\dots k(i+1)-1]$ and $E_i$ is a complex concentrated in degree $ki.$
\end{proposition}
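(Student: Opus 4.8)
The plan is to produce the triangle by hand: realize $\bigoplus_i D_i$ as an honest subcomplex of $C$ and analyze the quotient. First I would fix conventions so that the canonical truncation $\tau^{\le n}$ denotes the \emph{subcomplex} $(\cdots\to C^{n-1}\to \ker d^n\to 0\to\cdots)$ of $C$ (the one with $H^j(\tau^{\le n}C)=H^j(C)$ for $j\le n$ and $0$ above), while $\sigma^{\ge m}$ is the brutal truncation keeping the terms in degrees $\ge m$. With these conventions $D_i=\sigma^{\ge ki+1}(\tau^{\le k(i+1)-1}C)$ is literally a subcomplex of $C$, supported in the interval $[ki+1,k(i+1)-1]$, with $D_i^{\,j}=C^j$ for $ki+1\le j\le k(i+1)-2$ and $D_i^{\,k(i+1)-1}=\ker d^{k(i+1)-1}$. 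The key combinatorial remark is that, as $i$ runs over $\ZZ$, these intervals are pairwise disjoint and omit exactly the degrees divisible by $k$; hence the termwise direct sum $\bigoplus_i D_i$ is again a subcomplex of $C$, equal to $C^j$ in degrees $j\equiv 1,\dots,k-2\ (\mathrm{mod}\ k)$, equal to $\ker d^{\,j}$ when $j\equiv -1\ (\mathrm{mod}\ k)$, and $0$ when $j\in k\ZZ$.

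Next I would form the quotient complex $Q:=C/\bigoplus_i D_i$ and compute it degreewise: $Q^{ki}=C^{ki}$, $Q^{ki-1}\cong\operatorname{im}(d^{ki-1}\colon C^{ki-1}\to C^{ki})$ (this is $C^{ki-1}/\ker d^{ki-1}$), and $Q^{j}=0$ otherwise — note that for $k\ge 2$ the two families of degrees $\{ki\}$ and $\{ki-1\}$ are disjoint, so there is no ambiguity. A direct inspection of the induced differential shows it vanishes out of every degree $\equiv 0\ (\mathrm{mod}\ k)$, because $d^{ki}(C^{ki})\subseteq\ker d^{ki+1}$ and so dies in the relevant quotient; and out of every degree $\equiv -1\ (\mathrm{mod}\ k)$ it is the monomorphism $\operatorname{im}(d^{ki-1})\hookrightarrow C^{ki}$. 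Consequently $Q$ splits as a direct sum of two-term complexes, $Q\cong\bigoplus_i\bigl[\operatorname{im}(d^{ki-1})\hookrightarrow C^{ki}\bigr]$, with the $i$-th summand in degrees $[ki-1,ki]$. Since the displayed arrow is mono, that two-term complex is quasi-isomorphic to its cokernel placed in degree $ki$, so setting $E_i:=\operatorname{coker}(d^{ki-1}\colon C^{ki-1}\to C^{ki})[-ki]$ gives $Q\simeq\bigoplus_i E_i$ in $\D(\A)$, each $E_i$ concentrated in degree $ki$. The short exact sequence of complexes $0\to\bigoplus_i D_i\to C\to Q\to 0$ then yields the desired distinguished triangle $\bigoplus_i D_i\to C\to\bigoplus_i E_i\xrightarrow{+1}$.

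The place where the hypothesis genuinely matters is the degenerate case $k=1$: there every interval $[ki+1,k(i+1)-1]$ is empty, so all $D_i=0$ and the claim becomes $C\simeq\bigoplus_i E_i$, i.e. the formality of an arbitrary complex over a hereditary abelian category; this I would prove separately by inductively splitting off $H^n(C)[-n]$ from $\tau^{\le n}C$, the connecting morphism in the triangle $\tau^{\le n-1}C\to\tau^{\le n}C\to H^n(C)[-n]\xrightarrow{+1}$ lying in a group built from $\Ext^{\ge 2}_\A(-,-)$, which vanishes. For $k\ge 2$ the construction above is purely formal and uses nothing about $\A$ beyond being abelian; what the assumption $k=\operatorname{gl.dim}\A$ buys is the downstream fact that each $D_i$ is a complex of length $\le k-1$ and each $E_i$ a single object — exactly what is needed to bound how fast $C$ is generated. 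The only mild technical point is the existence of the infinite coproducts $\bigoplus_i D_i$ and $\bigoplus_i E_i$ in $\D(\A)$, which is automatic when the summands have pairwise essentially disjoint supports and $\A$ has countable coproducts (true in the intended case $\A=\Qcoh X$), and vacuous when $C$ is bounded. I expect the only real work to be bookkeeping: keeping the truncation conventions straight so that $D_i$ really includes into $C$, and checking that the induced differential on $Q$ degenerates along precisely the residues $0$ and $-1$ modulo $k$.
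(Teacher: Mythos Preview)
The paper does not prove this proposition at all: it is quoted verbatim from \cite[Prop.~7.22]{Ro} inside the proof of Theorem~\ref{regular} and used as a black box, so there is no ``paper's own proof'' to compare against.

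Your argument is correct and is essentially Rouquier's original one. The observation that for $k\ge 2$ the triangle is obtained from an honest short exact sequence of complexes, with the global-dimension hypothesis playing no role beyond the case $k=1$, is exactly the point; your bookkeeping on the supports of the $D_i$ and the degreewise computation of the quotient $Q$ are accurate. Two small remarks: first, since the $D_i$ (resp.\ the two-term summands of $Q$) have pairwise disjoint supports, the ``direct sums'' $\bigoplus_i D_i$ and $\bigoplus_i E_i$ are formed termwise with at most one nonzero summand in each degree, so no genuine infinite coproducts in $\A$ are required and your caveat about countable coproducts is unnecessary; second, your separate treatment of $k=1$ via formality over a hereditary category is the right move and is the only place the hypothesis is used.
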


Using this proposition we obtain that any object of $\D(\Qcoh X)$ belongs to $\widebar{\langle\cS\rangle}_{k(l+1)}$
where $k=\mathrm{gl}.\dim\Qcoh(X).$ Indeed $E_i\in \widebar{\langle\cS\rangle}_{l+1}$ and $D_i$ as complexes
of length $k-1$ belong to $\widebar{\langle\cS\rangle}_{(k-1)(l+1)}.$

Finally, by Proposition \ref{compact_gen} we have that $\prf X=\D(\Qcoh(X))^c=\langle\cS\rangle_{k(l+1)}.$
\end{dok}

\begin{remark}
{\rm Recently, Amnon Neeman obtained a more general result in this direction.
In particular, the property to be noetherian for the square is not needed.
}
\end{remark}

\begin{proposition}\label{prop_scheme} Let $X$ be a separated scheme of finite type
over a field $\kk.$ Then $X$ is proper if and only if the category
of perfect complexes $\prf X$ is proper.
\end{proposition}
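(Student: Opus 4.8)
The plan is to prove both implications separately. For the easy direction ($X$ proper $\Rightarrow$ $\prf X$ proper), I would simply invoke Proposition \ref{proper}, which is already established. So the content of the statement is the converse: if $\prf X$ is proper, then the separated finite-type scheme $X$ is proper over $\kk$.

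For the converse, first I would reduce to showing that $X$ is both universally closed (the properness part) and of finite type (already assumed) and separated (already assumed). The key observation is that properness of $\prf X$ forces strong finiteness constraints: for any two perfect complexes $\E^{\cdot}, \F^{\cdot}$, the total $\Hom$-space $\bigoplus_k \Hom(\E^{\cdot}, \F^{\cdot}[k])$ is finite-dimensional. Taking $\E^{\cdot} = \F^{\cdot} = \cO_X$ already gives that $\bigoplus_k H^k(X, \cO_X)$ is finite-dimensional; more generally, taking $\E^{\cdot} = \cO_X$ and $\F^{\cdot}$ any perfect complex shows that the hypercohomology of every perfect complex is finite-dimensional over $\kk$. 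The main step is to convert this cohomological finiteness into properness of $X$. A clean route is a Nagata-type argument: choose a compactification, i.e. an open immersion $X \hookrightarrow \bar X$ with $\bar X$ proper over $\kk$ (here one needs $X$ separated and of finite type; Nagata compactification applies), and then argue that if $X \ne \bar X$, one can produce a perfect complex on $X$ with infinite-dimensional cohomology, contradicting properness of $\prf X$. Concretely, if $Z = \bar X \setminus X$ is nonempty, the structure sheaf $\cO_X$, pushed forward along the open immersion, has cohomology that grows; alternatively, one uses a line bundle on $\bar X$ that is ample relative to the complement and restricts it to $X$, then the sections over $X$ of high tensor powers form an infinite-dimensional space detected inside $\Hom$-groups in $\prf X$. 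This is precisely where the referee's remark in the acknowledgments about the paper \cite{LN} enters: the result of Lipman–Neeman (or the relevant reference) is what allows one to handle the general, possibly non-reduced or non-normal, case and to control cohomology of perfect complexes on the non-proper scheme.

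So the skeleton is: (1) one direction is Proposition \ref{proper}; (2) for the converse, assume $\prf X$ proper, take a Nagata compactification $j\colon X \hookrightarrow \bar X$; (3) suppose for contradiction that $X \subsetneq \bar X$, let $Z$ be the boundary; (4) using \cite{LN} and an ample (or $Z$-ample) line bundle $\L$ on $\bar X$, exhibit that $\bigoplus_{n\ge 0}\Gamma(X, \L^{\otimes n}|_X)$ — or a suitable $\Ext$ or $\Hom$ group built from $\L|_X$ and its powers inside $\prf X$ — is infinite-dimensional; (5) conclude this contradicts properness of $\prf X$, hence $X = \bar X$ is proper. The main obstacle is step (4): making precise which perfect complex (or family of perfect complexes) on $X$ witnesses the failure of finite-dimensionality when $X$ is not proper, and doing so uniformly without assuming $X$ is nice (reduced, normal, etc.). This is exactly the technical point for which the author thanks de Jong and cites \cite{LN}; the cleanest formulation probably goes through the statement that for a non-proper $X$ of finite type, some perfect complex has non-finite-dimensional total cohomology, which can be deduced from comparing cohomology on $X$ with cohomology on a proper compactification via the Lipman–Neeman results on pseudo-coherence and boundedness of direct images.
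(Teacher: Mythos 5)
The easy direction via Proposition~\ref{proper} matches the paper, but your sketch of the converse has a genuine gap. Properness of $\prf X$ is a statement about finite-dimensionality of $\bigoplus_m\Hom(A,B[m])$ for each \emph{fixed} pair of perfect complexes $A,B$. To contradict it you must therefore exhibit a single such pair with infinite-dimensional total $\Hom$. The space $\bigoplus_{n\ge 0}\Gamma(X,\L^{\otimes n}|_X)$ runs over infinitely many targets, so its infinite-dimensionality says nothing against properness of $\prf X$; and you give no argument that any individual $\Gamma(X,\L^{\otimes n}|_X)$, or more generally $\bigoplus_m\Hom(\cO_X,\L^{\otimes n}|_X[m])$ for some single $n$, is infinite-dimensional. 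That is exactly the difficulty, and it is where your sketch stops.

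The paper's actual mechanism is different and worth noting. Via Chow's lemma one finds a quasi-projective $X'$ with a proper surjection to $X$; if $X$ is not proper, $X'$ is not projective, so a projective closure $\bar X'$ has nonempty boundary. Cutting out a suitable curve $C\subset\bar X'$ that meets the boundary and normalizing $C\cap X'$ produces an \emph{affine} curve $\wt C_0$ together with a \emph{proper} map $g:\wt C_0\to X$. The complex $\bR g_*\cO_{\wt C_0}$ is bounded with coherent cohomology and $\Hom(\cO_X,\bR g_*\cO_{\wt C_0})\cong H^0(\wt C_0,\cO_{\wt C_0})$ is infinite-dimensional; but this complex is in general not perfect, because $g$ need not have finite Tor-dimension. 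The Lipman--Neeman input \cite{LN} (Theorem 4.1 there) is then used for a very specific purpose: to approximate $\bR g_*\cO_{\wt C_0}$ by a perfect complex $P^\cdot$ agreeing with it on cohomology sheaves in all sufficiently high degrees, so that, since $X$ is covered by finitely many affines, $\Hom(\cO_X,P^\cdot)\cong\Hom(\cO_X,\bR g_*\cO_{\wt C_0})$ and one obtains the sought infinite-dimensional $\Hom$ between two honest perfect complexes. Your description of \cite{LN} as a boundedness/pseudo-coherence statement about direct images does not capture this approximation step, which is the technical heart of the argument. In short, the two missing ideas are: the proper map from an affine curve as the source of an explicit infinite-dimensional function ring, and the use of \cite{LN} to replace the resulting non-perfect pushforward by a perfect complex while preserving the relevant $\Hom$-space.
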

\begin{proof}
If $X$ is proper, then by Proposition \ref{proper}
the category $\prf X$ is proper.

Suppose that $\prf X$ is proper.
Let us show that $X$ is proper. We will prove by contradiction. Assume that $X$ is not proper.
By Chow's Lemma for any separated scheme of finite type $X$ there is a quasi-projective $X'$ with a proper map
$f: X'\to X.$ If $X$ is not proper $X'$ is not projective. Consider its closure $\widebar{X}'\in \PP^N.$
Take the complement $Y$ to $X'$ in $\widebar{X}'$ and choose a closed point $p\in Y.$
There is an irreducible and reduced projective curve $C\subset \widebar{X}'$ that contains the point $p$ and is not contained in $Y.$
Denote by $C_0\subset C$ the intersection of $C$ with $X'$ and by $\wt{C}$ and $\wt{C}_0$ the normalizations of
$C$ and $C_0$ respectively. Since $p\not\in C_0$ the complement $D$ to $\wt{C}_0$ in $\wt{C}$ is not empty.
The curve $C$ is regular, hence  $D$ is a Cartier divisor on $C.$ Since $D$ is effective it is ample (see, e.g. \cite[7.5.5]{Liu}).
This implies that $\wt{C}_0$ is an affine curve.
Now consider the composition map $g:\wt{C}_0\to X$ and take the complex $\bR g_* \cO_{\wt{C}_0}.$
Since $g$ is proper being a composition of proper morphisms, the complex $\bR g_* \cO_{\wt{C}_0}$ is a cohomologically bounded complex with coherent cohomology.
From Theorem 4.1 of \cite{LN} we know that, given any integer $m,$ we may find a perfect complex $P^{\cdot}$ and a morphism $u: P^{\cdot}\to
\bR g_* \cO_{\wt{C}_0}$ so that  the induced morphisms $\H^i(u)$ on cohomology sheaves are isomorphisms for all
$i>m.$ Let $m=-k-2$ where $X$ can be covered by $k$ affine open sets. Choose $u: P^{\cdot}\to
\bR g_* \cO_{\wt{C}_0}$ as above and complete to an exact triangle
\[
P^{\cdot}\stackrel{u}{\lto}\bR g_* \cO_{\wt{C}_0}\lto Q\lto P[1].
\]
Since all cohomology sheaves $\H^i(Q)$ are trivial when $i>-N-2$ the map between $\Hom(\cO_X, P^{\cdot})$ and $\Hom(\cO_X, \bR g_* \cO_{\wt{C}_0})$
is an isomorphism. Thus we obtain
\[
\Hom_X(\cO_X, P^{\cdot})\cong \Hom_X(\cO_X, \bR g_* \cO_{\wt{C}_0})\cong \Hom_{C_0}(\cO_{\wt{C}_0}, \cO_{\wt{C}_0})\cong H^0(\wt{C}_0, \cO_{\wt{C}_0})=A,
\]
where $\Spec A=\wt{C}_0.$
The  $\kk$\!-space $A$ is infinite dimensional over $\kk,$ while $\cO_X$ an $P^{\cdot}$ are both perfect.
Hence, $\prf X$ can not be proper. This proves the proposition.
\end{proof}
We recall that a scheme of finite type over a field $\kk$ is called {\em smooth}  if the scheme
$\widebar{X}=X\otimes_{\kk} \widebar{\kk}$ is regular, where $\widebar{\kk}$ is an algebraic closure
of $\kk.$

\begin{proposition}\label{smooth_proper} Let $X$ be a separated scheme of finite type
over an arbitrary field. Then $X$ is smooth and proper if and only if the DG category
$\prfdg X$ is smooth and proper.
\end{proposition}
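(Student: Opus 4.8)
The statement is the conjunction of two independent equivalences, and I would prove them separately. The equivalence ``$X$ is proper $\iff\prf X$ is proper'' is exactly Proposition~\ref{prop_scheme} (its ``only if'' half being Proposition~\ref{proper}), and it is valid for every separated scheme of finite type over $\kk$. So everything reduces to showing that $X$ is smooth over $\kk$ if and only if $\prfdg X$ is $\kk$-smooth; properness plays no role in this part, so I would establish the smoothness equivalence on its own. For the direction ``$X$ smooth $\Rightarrow\prfdg X$ smooth'' I would invoke \cite{Lu}; the underlying mechanism is that smoothness of $X$ over $\kk$ makes the diagonal $\Delta\colon X\hookrightarrow X\times_{\kk}X$ a regular closed immersion with locally free conormal sheaf $\Omega_{X/\kk}$, so that $\cO_{\Delta}$ is resolved Zariski-locally by a finite Koszul complex of locally free sheaves and is therefore a perfect complex on $X\times_{\kk}X$. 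Fixing a classical generator $\mE\in\prf X$ and putting $\dE=\dHom_X(\mE,\mE)$, so that $\prfdg X\simeq\prfdg\dE$ by Statement~\ref{statement}, the object $\cO_{\Delta}$ — being the kernel of the identity Fourier--Mukai functor — corresponds to the diagonal $\dE^{\op}\otimes_{\kk}\dE$-module $\dE$, whence $\dE$, and with it $\prfdg X$, is $\kk$-smooth.

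For the converse I would argue by base change to the algebraic closure. Suppose $\prfdg X$ is $\kk$-smooth. Then $\prfdg X\simeq\prfdg\dE$ (Statement~\ref{statement}) and, smoothness being invariant under Morita equivalence, $\dE$ is $\kk$-smooth, i.e. $\dE$ is perfect as a $\dE^{\op}\otimes_{\kk}\dE$-module. Applying $(-)\otimes_{\kk}\widebar{\kk}$, which carries perfect modules to perfect modules, shows that $\dE\otimes_{\kk}\widebar{\kk}$ is perfect over $(\dE\otimes_{\kk}\widebar{\kk})^{\op}\otimes_{\widebar{\kk}}(\dE\otimes_{\kk}\widebar{\kk})$, that is, $\dE\otimes_{\kk}\widebar{\kk}$ is $\widebar{\kk}$-smooth. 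On the other hand, writing $p\colon\widebar{X}=X\otimes_{\kk}\widebar{\kk}\to X$, the pullback $p^{*}\mE$ is a classical generator of $\prf\widebar{X}$ (external products of classical generators classically generate the category of perfect complexes on a product, and $\widebar{\kk}$ generates $\prf\widebar{\kk}$), and flat base change along the affine flat morphism $p$ identifies $\dHom_{\widebar{X}}(p^{*}\mE,p^{*}\mE)\simeq\bR\Gamma(X,\rhom(\mE,\mE))\otimes_{\kk}\widebar{\kk}$ with $\dE\otimes_{\kk}\widebar{\kk}$. Hence $\prfdg\widebar{X}\simeq\prfdg(\dE\otimes_{\kk}\widebar{\kk})$ is $\widebar{\kk}$-smooth, so by Proposition~\ref{smooth_regular} the triangulated category $\prf\widebar{X}$ is regular. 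Since $\widebar{X}$ is of finite type over $\widebar{\kk}$ — hence separated, noetherian, of finite Krull dimension, with noetherian square — Theorem~\ref{regular} gives that $\widebar{X}$ is regular, which by definition means $X$ is smooth over $\kk$.

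The step that needs the most care — and which I would regard as the main obstacle — is the dictionary used in the forward direction: that a perfect complex on $X\times_{\kk}X$, and in particular $\cO_{\Delta}$, corresponds under the To\"en-type identification of $\prf(X\times_{\kk}X)$ with quasi-functors $\prf X\to\prf X$ (Theorem~\ref{quasi-functors} and \cite{To}) to a \emph{perfect} $\dE^{\op}\otimes_{\kk}\dE$-module, the identity functor going to the diagonal bimodule. Setting this up correctly without assuming properness of $X$ is precisely what is packaged in \cite{Lu}, which is why I would cite it there rather than reprove it. In the converse direction the analogous points to be checked — stability of $\kk$-smoothness of a DG category under the field extension $\kk\subset\widebar{\kk}$ (it follows since $(-)\otimes_{\kk}\widebar{\kk}$ commutes with cones, retracts and free modules), the fact that $p^{*}\mE$ is genuinely a classical generator of $\prf\widebar{X}$, and the compatibility of $\dHom$ with flat base change for the perfect complex $\mE$ — are routine, so once the smoothness equivalence over an arbitrary field is in place the only remaining inputs are Theorem~\ref{regular} and Proposition~\ref{smooth_regular}.
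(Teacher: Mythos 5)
Your reduction to the two independent equivalences is correct and your citation of Propositions~\ref{prop_scheme} and~\ref{proper} for the properness half matches the paper exactly. For the smoothness half, though, your route differs from the paper's in both directions, and one of those differences hides a small gap.

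In the direction ``$\prfdg X$ smooth $\Rightarrow X$ smooth'' your argument is correct but is a genuine variant: after base-changing to $\widebar{\kk}$ (where the hypotheses of Theorem~\ref{regular} for $\widebar{X}$ are indeed satisfied, since $\widebar{X}$ is of finite type over a field), you deduce regularity of $\widebar{X}$ via the chain ``$\prfdg\widebar{X}$ smooth $\Rightarrow\prf\widebar{X}$ regular (Proposition~\ref{smooth_regular}) $\Rightarrow\widebar{X}$ regular (Theorem~\ref{regular}).'' The paper instead applies \cite[3.13]{Lu} directly over the perfect field $\widebar{\kk}$ to get $\widebar{X}$ regular in one step. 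Both work; your route trades the reference for two results developed in this paper, which is arguably more self-contained. (Your justification that $p^{*}\mE$ is a classical generator of $\prf\widebar{X}$ is better phrased via adjunction: $\bR p_{*}$ is conservative because $p$ is affine and faithfully flat, and $p^{*}$ preserves perfect complexes, so $p^{*}\mE$ is a compact generator; the ``external products'' slogan you reach for is a statement about products of finite-type schemes and does not literally apply to $\widebar{X}=X\times_{\kk}\Spec\widebar{\kk}$.)

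In the direction ``$X$ smooth $\Rightarrow\prfdg X$ smooth'' there is a real gap. You invoke \cite{Lu} directly over $\kk$, but \cite[3.13]{Lu} is stated, and the paper explicitly says so, only for a separated finite-type scheme over a \emph{perfect} field. Your worry about ``without assuming properness'' is not where the danger lies; the danger is perfectness of the base field. The paper circumvents this precisely by \emph{not} arguing over $\kk$: it base-changes to $\widebar{\kk}$, uses \cite[3.13]{Lu} there, and then descends smoothness of $\prfdg\widebar{X}\simeq\prfdg(\dE\otimes_{\kk}\widebar{\kk})$ to smoothness of $\prfdg\dE$ via faithfully flat descent of perfectness (``if $\mM\otimes_{\kk}\widebar{\kk}$ is perfect over $\dA\otimes_{\kk}\widebar{\kk}$, then $\mM$ is perfect over $\dA$''). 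Your Koszul-resolution sketch of the diagonal does in fact extend to any field once $X$ is smooth (not merely regular) over $\kk$, since then $\Delta$ is a regular closed immersion with locally free conormal $\Omega_{X/\kk}$; but as written you are leaning on \cite{Lu} for this, and that reference does not cover the case you need. To close the gap you should either carry out the Koszul/To\"en argument directly over $\kk$, or, as the paper does, base-change to $\widebar{\kk}$ and appeal to faithfully flat descent.
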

\begin{proof}
The statement for smoothness (without properness) is proved in \cite[3.13]{Lu} for a separated scheme of finite type over a perfect field.
On the other hand, the definition of a smooth scheme and a smooth DG category is invariant under a base field change.
Indeed, if $\prfdg X$ is smooth, then $\prfdg \widebar{X}$ is smooth and, hence, by \cite[3.13]{Lu} the scheme $\widebar{X}$ is smooth (regular).
But this is exactly smoothness of $X$ by definition.
The properness of $X$ is proved in Proposition \ref{prop_scheme}.

Now if $X$ is smooth and proper, then properness of the DG category
$\prfdg X$ follows from Proposition \ref{proper}.
Since $\widebar{X}$ is regular we obtain that  the DG category $\prfdg \widebar{X}$ is smooth by \cite[3.13]{Lu}.
Finally, we should argue that smoothness of $\prfdg\widebar{X}$ implies
smoothness of $\prfdg X.$ Since $\kk\subset\widebar{\kk}$ is faithfully flat,  the following property holds:
for any DG category $\dA$ and any $\dA$\!--module
$\mM$ if $\mM\otimes_{\kk}\widebar{\kk}$ is perfect as $\dA\otimes_{\kk}\widebar{\kk}$\!--module, then
$\mM$ is also perfect.
\end{proof}

\section{Gluing of smooth projective schemes and geometric noncommutative schemes}

\subsection{Geometric noncommutative schemes}

Let $X$ and $Y$ be two smooth projective schemes over a field $\kk.$
Consider DG categories of perfect complexes $\prfdg X$ and $\prfdg Y.$ Since $X$ and $Y$
are smooth these categories are quasi-equivalent to DG categories $\dD^b(\coh X)$ and $\dD^b(\coh Y),$
respectively. Theorem \ref{dg_saturated} tells us that for a regular and proper $X$ there is a quasi-equivalence
\[
\dR\lHom(\prfdg X^{\op},\; \prfdg\kk)\cong \prfdg X.
\]
Therefore, applying the canonical quasi-equivalence (\ref{RHom}) we obtain that
\begin{multline}
\dR\lHom(\prfdg Y\otimes_{\kk}\prfdg X^{\op},\; \prfdg\kk)\cong \dR\lHom(\prfdg Y, \dR\lHom(\prfdg X^{\op},\; \prfdg\kk))
\cong\\
 \dR\lHom(\prfdg Y,\; \prfdg X).
\end{multline}
Moreover, there is the following theorem due to B.~To\"en.
\begin{theorem}\cite{To}\label{toen}
Let $X$ and $Y$ be smooth projective schemes over a field $\kk.$
Then there is a canonical isomorphism in $\Hqe$
\[
\dR\lHom(\prfdg Y,\; \prfdg X)\cong\prfdg(X\times_k Y).
\]
In particular, the DG category $\prfdg(X\times_k Y)$ is quasi-equivalent to the DG category of perfect
DG modules over $\prfdg Y^{\op}\otimes_{\kk}\prfdg X.$
\end{theorem}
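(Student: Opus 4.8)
The plan is to reduce the statement to the affine-over-$X\times Y$ picture via generators and Keller's Morita theory, and then to identify the two DG algebras that appear. First I would pick classical generators $\mE_X\in\prfdg X$ and $\mE_Y\in\prfdg Y$; by Statement \ref{statement} we may replace $\prfdg X$ and $\prfdg Y$ by $\prfdg\dE_X$ and $\prfdg\dE_Y$, where $\dE_X=\dHom(\mE_X,\mE_X)$ and $\dE_Y=\dHom(\mE_Y,\mE_Y)$ are cohomologically bounded DG algebras, which are moreover smooth (by Proposition \ref{smooth_proper} and Morita-invariance of smoothness). The key external input is the known projection formula / K\"unneth-type statement that $\mE_X\boxtimes\mE_Y$ is a classical generator of $\prf(X\times_{\kk}Y)$ — this follows because for smooth projective $X,Y$ the exterior tensor products of generators generate the product (one sees this from the fact that $\cO_{\Delta_X}\boxtimes\cO_{\Delta_Y}$ generates $\prf((X\times Y)\times(X\times Y))$, or directly from the ample-family resolution argument as in the proof of Theorem \ref{regular}). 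Granting that, $\dHom_{\prfdg(X\times Y)}(\mE_X\boxtimes\mE_Y,\mE_X\boxtimes\mE_Y)\cong\dE_X\otimes_{\kk}\dE_Y$ — up to quasi-isomorphism, using that the external Hom of perfect complexes computes as the tensor product of the individual RHom complexes — so again by Statement \ref{statement} we get $\prfdg(X\times_{\kk}Y)\simeq\prfdg(\dE_X\otimes_{\kk}\dE_Y)$.

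Next I would show $\prfdg(\dE_X\otimes_{\kk}\dE_Y)\simeq\dR\lHom(\prfdg Y,\;\prfdg X)$. Since $\dE_Y$ is smooth and proper, Theorem \ref{dg_saturated} gives $\dR\lHom(\prfdg\dE_Y^{\op},\prfdg\kk)\simeq\prfdg\dE_Y$, hence by the duality displayed in the excerpt, $\dR\lHom(\prfdg\dE_Y,\prfdg\dE_X)\simeq\dR\lHom(\prfdg\dE_Y\otimes_{\kk}\prfdg\dE_X^{\op},\prfdg\kk)$. Using the internal-Hom adjunction \eqref{RHom} together with the identification $\prfdg\dE_X\simeq\dR\lHom(\prfdg\dE_X^{\op},\prfdg\kk)$ once more, this rearranges to the DG category of perfect modules over $\dE_Y^{\op}\otimes_{\kk}\dE_X\simeq(\dE_X^{\op}\otimes_{\kk}\dE_Y)^{\op}$; since a DG algebra and its opposite have equivalent categories of perfect modules only up to the obvious op-twist, one checks that $\dR\lHom(\prfdg\dE_Y,\prfdg\dE_X)$ is quasi-equivalent to $\prfdg(\dE_Y^{\op}\otimes_{\kk}\dE_X)$, and this in turn (after untwisting the variance, or by symmetry of the roles of $X$ and $Y$ in the product) matches $\prfdg(\dE_X\otimes_{\kk}\dE_Y)$. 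Here one must be careful that To\"en's internal Hom of $\prfdg\dA$ and $\prfdg\dB$ is computed by $\prfdg$ of the bimodule category $\prfdg(\dB^{\op}\otimes_{\kk}\dA)$ when $\dB$ is smooth and proper — this is essentially the content of the last displayed theorem of the excerpt and the preceding discussion, so I would invoke it directly rather than reprove it.

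The main obstacle I anticipate is not any single formal manipulation but the bookkeeping of variances and the justification that external Hom of perfect complexes on $X\times Y$ really is the derived tensor product of the two Hom-complexes over $\kk$ — i.e. the statement $\dHom_{X\times Y}(A\boxtimes B,\,A'\boxtimes B')\simeq\dHom_X(A,A')\otimes_{\kk}\dHom_Y(B,B')$ in the h-flat enhancement. For perfect complexes both sides are perfect over $\kk$ and the natural map is an isomorphism on an affine cover, so it is an isomorphism; but one should phrase this at the DG level carefully so that the induced algebra isomorphism $\dHom(\mE_X\boxtimes\mE_Y)\simeq\dE_X\otimes\dE_Y$ is genuinely a quasi-isomorphism of DG algebras, not merely of complexes. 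Once that identification is in place and the generation statement for $\mE_X\boxtimes\mE_Y$ is granted, the rest is a chain of quasi-equivalences assembled from Statement \ref{statement}, Theorem \ref{dg_saturated}, and \eqref{RHom}. The ``In particular'' clause is then immediate, since by construction $\prfdg(X\times_{\kk}Y)\simeq\prfdg(\dE_X\otimes_{\kk}\dE_Y)\simeq\prfdg(\prfdg Y^{\op}\otimes_{\kk}\prfdg X)$, the last quasi-equivalence because $\prfdg\dE_Y\simeq\prfdg Y$ and $\prfdg\dE_X\simeq\prfdg X$ together with Morita-invariance of the category of perfect modules.
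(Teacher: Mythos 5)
The paper does not prove Theorem~\ref{toen}; it is imported verbatim as a citation to To\"en's paper, and the surrounding text (Proposition~\ref{geometric}, the discussion of $\mS_{\mE^{\cdot}}$) only records its consequences. So there is no internal proof to compare against, and your sketch must be judged as a free-standing reconstruction. As such the outline is sound: pick classical generators $\mE_X$, $\mE_Y$, reduce via Statement~\ref{statement} to DG algebras $\dE_X$, $\dE_Y$, use that $\mE_X\boxtimes\mE_Y$ classically generates $\prf(X\times_\kk Y)$ (true for smooth projective $X,Y$ by the ample-family argument) together with the DG-level K\"unneth isomorphism $\dHom_{X\times Y}(\mE_X\boxtimes\mE_Y,\mE_X\boxtimes\mE_Y)\simeq\dE_X\otimes_\kk\dE_Y$, and match this against a Morita-theoretic description of $\dR\lHom(\prfdg\dE_Y,\prfdg\dE_X)$.

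Two places need to be tightened. First, the variance. The algebra $\dE_X\otimes_\kk\dE_Y$ is Morita-equivalent to $\dE_Y^{\op}\otimes_\kk\dE_X$ only after invoking that $\dE_Y$ and $\dE_Y^{\op}$ are themselves Morita-equivalent, which is a consequence of the anti-equivalence $\bR\lHom(-,\cO_Y)$ on $\prf Y$ — a geometric fact about schemes, not a formal property of DG algebras, so it should be stated rather than elided as an ``op-twist.'' Cleaner is to use $\mE_X\boxtimes\mE_Y^{\vee}$ as the generator of $\prf(X\times Y)$ from the start: its endomorphism DG algebra is $\dE_X\otimes_\kk\dE_Y^{\op}\cong\dE_Y^{\op}\otimes_\kk\dE_X$, which lands directly on the variance in the theorem statement. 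Second, and more seriously, the step you ``invoke directly rather than reprove'' — that $\dR\lHom(\prfdg\dE_Y,\prfdg\dE_X)\simeq\prfdg(\dE_Y^{\op}\otimes_\kk\dE_X)$ when the algebras are smooth and proper — is \emph{not} what the last displayed theorem of the excerpt says. That theorem identifies $\dR\lHom(\dA,\dB)$ with the DG category $\dRep(\dA,\dB)$ of \emph{all} quasi-functors, for arbitrary small DG categories; the passage from $\dRep$ to perfect bimodules is precisely where Theorem~\ref{dg_saturated} (cohomologically finite implies perfect, for regular proper DG categories) enters, and is the technical heart of To\"en's result in the smooth-proper setting. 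You do in fact cite Theorem~\ref{dg_saturated} elsewhere in the sketch, so the assembly is not wrong; but as written the argument reads as though the key Morita identification were free, when in reality it is the one place the smooth-and-proper hypothesis actually does work. Making that dependence explicit — and acknowledging that the ``proof'' is an assembly of To\"en's theorem from its own supporting lemmas rather than an independent argument — would make the sketch honest about what it establishes.
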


This quasi-equivalence can be described explicitly.
As was explained in Section \ref{3.1} there are DG functors
\[
\mpr_1^*: \prfdg X\lto \prfdg(X\times Y),\quad \text{and}\quad \mpr_2^*: \prfdg Y\lto \prfdg(X\times Y)
\]
For any perfect complex $\mE^{\cdot}$ on the product $X\times_{\kk} Y$ we can define a bimodule
$\mS_{\mE^{\cdot}}$ by the rule
\[
\mS_{\mE^{\cdot}}(B, A)\cong \dHom_{\prfdg(X\times Y)}(\mpr_1^* A,\; \mpr_2^* B\otimes \mE),
\quad\text{where}
\quad
A\in \prfdg X,\; B\in\prfdg Y.
\]
This is exactly the quasi-equivalence between the DG category $\prfdg(X\times Y)$ and
the DG category of perfect $\prfdg Y\hy\prfdg X$\!--bimodules, i.e. perfect
DG modules over $\prfdg Y^{\op}\otimes_{\kk}\prfdg X.$

Let $\M\subset\prf X$ and $\N\subset\prf Y$ be admissible subcategories, where $X$ and $Y$ are smooth projective schemes
over $\kk.$ Consider the induced DG subcategories $\dM\subset\prfdg X$ and $\dN\subset\prfdg Y$
and  the induced DG functor $\mF: \dN^{\op}\otimes\dM\to \prfdg Y^{\op}\otimes\prfdg X$ that is fully faithful.
This DG functor gives the extension quasi-functor
\[
\mF^*:\prfdg(\dN^{\op}\otimes\dM)\lto \prfdg (X\times Y)
\]
that is fully faithful on the homotopy categories by Proposition \ref{Keller2}.
In more detail,  for any pair of admissible subcategories $\M\subset\prf X$ and $\N\subset\prf Y$ we can define
a full triangulated subcategory $\M\prd\N$ of the category
$\prf(X\times Y)$ as the minimal triangulated subcategory
of $\prf(X\times Y)$ closed under taking direct summands and containing
all objects of the form $\pr_1^* M\otimes \pr_2^* N$ with $M\in \M$ and $N\in \N.$
Denote by $\dM\prd\dN\subset \prfdg (X\times Y)$ the induced enhancement of $\M\prd\N.$

It is easy to see that $\prfdg(\dN^{\op}\otimes\dM)$ is quasi-equivalent to $\dM\prd\dN$ because
$\prf(\dN^{\op}\otimes\dM)$ and $\M\prd\N$ are classically generated by $\Ob(\dN^{\op}\otimes\dM).$

Being admissible subcategories  in DG categories of perfect complexes on smooth and proper schemes,
the DG categories $\dM$ and $\dN$
are smooth and proper (see Theorem \ref{smooth_glue} for smoothness and Proposition \ref{reg_prop} for properness). By Theorem \ref{dg_saturated} there is a quasi-equivalence
\[
\dR\lHom(\dM^{\op},\; \prfdg\kk)\cong \dM.
\]
Therefore, applying the canonical quasi-equivalence (\ref{RHom}) we obtain that
\[
\dR\lHom(\dN\otimes_{\kk}\dM^{\op},\; \prfdg\kk)\cong \dR\lHom(\dN, \dR\lHom(\dM^{\op},\; \prfdg\kk))
\cong\dR\lHom(\dN,\; \dM).
\]

Let us summarize what we have.
\begin{proposition}\label{geometric}
Let $X$ and $Y$ be two smooth projective schemes and $\dM\subset\prfdg X$ and $\dN\subset\prfdg Y$ be full DG subcategories such that
the subcategories $\M=\Ho(\dM)$ and $\N=\Ho(\dN)$ are admissible in $\prf X$ and $\prf Y,$ respectively.
In this case there are quasi-equivalences of DG categories
\[
\dR\lHom(\dN,\; \dM)\cong\prfdg(\dN^{\op}\otimes_{\kk}\dM)\cong \dM\prd\dN\subset\prfdg(X\times_{\kk}Y),
\]
where $\dM\prd\dN$ is a full DG subcategory of $\prfdg(X\times Y)$ that is classically generated by objects of the form
$\pr_1^* M\otimes \pr_2^* N$ with $M\in \M$ and $N\in \N.$
\end{proposition}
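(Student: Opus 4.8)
The plan is to verify the two quasi-equivalences in Proposition~\ref{geometric} separately, then identify the rightmost object with a full DG subcategory of $\prfdg(X\times_\kk Y)$. First I would establish $\dR\lHom(\dN,\dM)\cong\prfdg(\dN^{\op}\otimes_\kk\dM)$. Since $\dM$ and $\dN$ are admissible DG subcategories of DG categories of perfect complexes on smooth projective schemes, they are smooth and proper by Theorem~\ref{smooth_glue} and Proposition~\ref{reg_prop}; in particular $\Ho(\dM)$ and $\Ho(\dN)$ are idempotent complete. Theorem~\ref{dg_saturated}, applied to $\dM$, then gives the quasi-equivalence $\dR\lHom(\dM^{\op},\prfdg\kk)\cong\dM$, exactly as in the displayed computation preceding the statement. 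Feeding this into the canonical quasi-equivalence~(\ref{RHom}) and using that $\dR\lHom(\dN^{\op}\otimes_\kk\dM^{\op},\prfdg\kk)$ is by definition $\prfdg$ of the dual of $\dN^{\op}\otimes_\kk\dM$ (again via Theorem~\ref{dg_saturated}, now for the smooth proper DG category $\dN^{\op}\otimes_\kk\dM$, whose smoothness follows from smoothness of $\dM,\dN$ and the compatibility of smoothness with tensor products), one gets $\dR\lHom(\dN,\dM)\cong\prfdg(\dN^{\op}\otimes_\kk\dM)$.

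Next I would handle the identification $\prfdg(\dN^{\op}\otimes_\kk\dM)\cong\dM\prd\dN$. The inclusions $\dM\hookrightarrow\prfdg X$ and $\dN\hookrightarrow\prfdg Y$ induce a fully faithful DG functor $\mF\colon\dN^{\op}\otimes_\kk\dM\to\prfdg Y^{\op}\otimes_\kk\prfdg X$, and composing with the quasi-equivalence of Theorem~\ref{toen} identifying $\prfdg(\dN^{\op}\otimes\dM)$-style categories, we get a DG functor landing in $\prfdg(X\times_\kk Y)$ whose essential image on objects consists of the bimodules $\mS_{\mE^\cdot}$ corresponding to complexes of the form $\pr_1^*M\otimes\pr_2^*N$ with $M\in\M$, $N\in\N$. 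Then I would invoke Proposition~\ref{Keller2} (together with Remark~\ref{Keller2_fg}): the extension quasi-functor $\mF^*\colon\prfdg(\dN^{\op}\otimes\dM)\to\prfdg(X\times Y)$ is fully faithful on homotopy categories, and its image is the smallest triangulated subcategory of $\prf(X\times Y)$ containing those objects and closed under direct summands, which is precisely $\M\prd\N$ by definition. Hence $\prfdg(\dN^{\op}\otimes\dM)\cong\dM\prd\dN$, and the latter is a full DG subcategory of $\prfdg(X\times_\kk Y)$ as claimed; the final inclusion statement is immediate from the construction of $\dM\prd\dN$.

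The main obstacle, I expect, is making the application of Theorem~\ref{dg_saturated} rigorous in this relative setting: one must check that $\dN^{\op}\otimes_\kk\dM$ (and $\dM$, $\dN$ themselves) genuinely satisfy the hypotheses of that theorem, namely that they are \emph{small} pretriangulated DG categories that are both smooth and proper with idempotent-complete homotopy categories. Smoothness of $\dM$ and $\dN$ is where Theorem~\ref{smooth_glue} enters --- an admissible subcategory is a semi-orthogonal summand, so its complement glues back to $\prfdg X$, forcing the summand to be smooth --- and properness is Proposition~\ref{reg_prop}(1). Smoothness of the tensor product $\dN^{\op}\otimes_\kk\dM$ requires the stated invariance of smoothness under tensor products over $\kk$; properness of the tensor product follows from the K\"unneth-type fact that cohomology of Hom-complexes in $\dN^{\op}\otimes_\kk\dM$ is a tensor product of the (finite-dimensional) cohomologies in $\dN$ and $\dM$, hence finite-dimensional, so Remark~\ref{finitness} applies. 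Once these finiteness and regularity bookkeeping steps are in place, the rest is a formal chain of the already-established quasi-equivalences~(\ref{RHom}), Theorems~\ref{dg_saturated},~\ref{toen}, and Proposition~\ref{Keller2}.
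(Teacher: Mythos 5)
Your proposal follows essentially the same path as the paper: saturation via Theorem~\ref{dg_saturated} plus the internal-$\Hom$ adjunction~(\ref{RHom}) to obtain $\dR\lHom(\dN,\dM)\cong\dR\lHom(\dN\otimes_\kk\dM^{\op},\prfdg\kk)\cong\prfdg(\dN^{\op}\otimes_\kk\dM)$, then To\"en's theorem and Proposition~\ref{Keller2} (with Remark~\ref{Keller2_fg}) to identify the latter with $\dM\prd\dN$ via common classical generation, with smoothness/properness of $\dM,\dN$ supplied by Theorem~\ref{smooth_glue} and Proposition~\ref{reg_prop}. One small slip worth flagging: you wrote $\dR\lHom(\dN^{\op}\otimes_\kk\dM^{\op},\prfdg\kk)$ where it should be $\dR\lHom(\dN\otimes_\kk\dM^{\op},\prfdg\kk)$, and the identification with $\prfdg(\dN^{\op}\otimes_\kk\dM)$ is not ``by definition'' but is exactly where Theorem~\ref{dg_saturated} (applied to $\dN^{\op}\otimes_\kk\dM$, which is regular and proper by the K\"unneth and tensor-smoothness considerations you correctly flag) is doing the work.
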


We are interested in smooth (or regular) and proper noncommutative scheme $\prfdg\E.$
Smooth and proper geometric noncommutative schemes naturally appear as induced enhancements of
admissible subcategories $\N\subset\prf X$ for some smooth and projective scheme $X.$

\begin{definition}
A noncommutative scheme $\prfdg\E$ (see Definition \ref{noncommutative_scheme})
will be called a {\em geometric noncommutative scheme} if
there are a smooth and projective scheme $X$ and an admissible subcategory $\N\subset \prf X$
such that $\prfdg \E$ is quasi-equivalent to the corresponding enhancement $\dN\subset \prfdg X$ of $\N.$
\end{definition}

We can consider a 2-category of smooth and proper noncommutative schemes
$\RPNS$ over a field $\kk.$
Objects of $\RPNS$ are DG categories $\dA$ of the form $\prfdg\dE,$ where
$\dE$ is a smooth and proper DG algebra; 1-morphisms are
quasi-functors $\mT;$  2-morphisms are morphisms of quasi-functors, i.e. morphisms in
$\D(\dA^{op}\otimes\dB).$
The 2-category $\RPNS$ has a natural full 2-subcategory of geometric noncommutative schemes $\GNS.$
Evidently, $\GNS$ contains all smooth and proper commutative schemes with $\prf(X\times Y)$ as category of morphisms
between $X$ and $Y.$
The natural question that arises is following.

\begin{question}
Is there a smooth and proper noncommutative scheme that is not geometric?
\end{question}

The first attempt to find such a noncommutative scheme is to glue geometric noncommutative schemes via a bimodule.
Another way is to consider  a finite dimensional $\kk$\!--algebra $\Lambda$ of finite global dimension and take
the DG category $\prfdg\Lambda.$

The main goal of this paper is to show that these two approaches do not lead  us to new noncommutative schemes. We show that the world of geometric noncommutative schemes is closed under gluing via any perfect bimodule.
More precisely, consider smooth and proper geometric noncommutative schemes $\prfdg\dE_1$ and $\prfdg\dE_2$ such that
$\prfdg\dE_i$ is quasi-equivalent to $\dN_i\subset\prfdg X_i,$ where $X_i$ are smooth and projective and
$\N_i=\Ho(\dN_i)$ are admissible in $\prf X_i,$ respectively.
After that we take a gluing $\prfdg\dE_1\underset{\mS}{\oright}\prfdg\dE_2$ via a perfect bimodule $\mS$ and show that
the resulting noncommutative scheme is geometric too (see Theorem \ref{main2}).

\begin{remark}\label{Hironaka}
{\rm
We work with smooth and projective schemes. On the other hand, over a field of characteristic 0
the category of perfect complexes on any smooth and proper scheme can be realized as an admissible subcategory
in a smooth and projective scheme. Indeed, by Chow's Lemma for a proper scheme $X$ there is a proper birational morphism
$f: Y\to X$ from a projective scheme $Y.$ Now applying Hironaka hut for resolution of the birational map $X\dashrightarrow Y,$
we can find a proper scheme $Z$ with birational maps to $X$ and $Y$ such that the morphism $\pi: Z \to X$
is a sequence of blowups with regular centers. This implies that $Z$ is smooth and also projective because there is a proper
birational morphism to the projective scheme $Y.$ Finally, the inverse image functor $\bL\pi^*$ gives a full embedding
of $\prf X$ into $\prf Z.$
Note that over the complex numbers $\CC$ we can apply a result of Moishezon asserting that any smooth and proper algebraic space over $\CC$
becomes a projective variety after some blowups along smooth centers.
}
\end{remark}

We also show that for any finite dimensional $\kk$\!--algebra $\Lambda$ such that its semisimple part
$S=\Lambda/\rd$ is separable over $\kk$ there are a smooth and projective scheme $X$ and a perfect complex
$\E^{\cdot}$ such that $\bR\Hom(\E^{\cdot},  \E^{\cdot})\cong \Lambda.$
This implies that for the finite dimensional algebra $\Lambda$ the DG category
$\prfdg\Lambda$ is quasi-equivalent to a full DG subcategory of $\prfdg X$ and in the case of finite global dimension
the smooth and proper noncommutative scheme $\prfdg\Lambda$ is geometric (Theorem \ref{algebra}).

\subsection{Perfect complexes as direct images of line bundles}
Let $X$ be a scheme and $\E^{\cdot}$ be a strict perfect complex, i.e. a bounded complex of algebraic vector bundles
(locally free sheaves of finite type). In this section we show that
any such a strict perfect complex $\E^{\cdot}$ can be realized as a direct image of a line bundle with respect to a smooth morphism
$Z\to X.$

\begin{proposition}\label{line} Let $X$ be a scheme. Let $\E^{\cdot}=\{\E^0\to\cdots\to\E^k\}$
be a bounded complex of algebraic vector bundles on $X.$
Then there are a scheme $Z$ with a morphism $f: Z\to X$ and a line bundle $\L$ on $Z$
such that
\begin{enumerate}
\item[1)]
$\bR f_{*}\L\cong \E^{\cdot}$ in the derived category $\D(\Qcoh X);$
\item[2)] $\bR f_{*}\L^{-1}\cong 0;$
\item[3)] the morphism $f: Z\to X$ is a composition of maps $Z=X_n\to X_{n-1}\to\cdots\to X_0=X,$
where each $X_{p+1}$ is the projectivization of a vector bundle $\F_p$ over $X_p.$
\end{enumerate}
\end{proposition}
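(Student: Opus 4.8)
The plan is to build $Z$ and $\L$ by a nested induction that reduces everything to a single ``two-term'' situation. Write $\E^{\cdot}=\{\E^{0}\xrightarrow{d^{0}}\E^{1}\to\cdots\to\E^{k}\}$ in cohomological degrees $0,\dots,k$. Throughout, the basic bookkeeping fact is that for a tower of projectivizations $g\colon W\to X$ one has $\bR g_{*}\cO_{W}\cong\cO_{X}$, hence by the projection formula $\bR g_{*}g^{*}\F\cong\F$ for every $\F\in\D(\Qcoh X)$, $\bR g_{*}$ commutes with cones and shifts, and $\bR g_{*}(\L'\otimes g^{*}(-))\cong(\bR g_{*}\L')\otimes(-)$. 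Consequently it is enough to realize $\E^{\cdot}$ over such a $W$ (find a further tower $q\colon Z\to W$ and a line bundle $\L_{Z}$ with $\bR q_{*}\L_{Z}\cong g^{*}\E^{\cdot}$ and $\bR q_{*}\L_{Z}^{-1}\cong 0$): the composition $f=g\circ q$ will then satisfy all three requirements, and the vanishing $\bR f_{*}\L^{-1}\cong 0$ propagates automatically, because at every step the ``new'' line bundle is the relative Serre sheaf $\cO_{\pi}(1)$ of a projectivization $\pi$ of a bundle of rank $\ge 2$, for which $\bR\pi_{*}\cO_{\pi}(-1)\cong 0$.

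Outer step (induction on $k$). The stupid-filtration triangle $\sigma^{\ge 1}\E^{\cdot}\to\E^{\cdot}\to\E^{0}\xrightarrow{d^{0}}\sigma^{\ge 1}\E^{\cdot}[1]$ identifies $\E^{\cdot}$ with $\operatorname{cone}(\E^{0}\xrightarrow{d^{0}}\sigma^{\ge 1}\E^{\cdot}[1])[-1]$. By the inductive hypothesis the complex $\E'^{\cdot}:=\sigma^{\ge 1}\E^{\cdot}[1]=\{\E^{1}\to\cdots\to\E^{k}\}$, which has one fewer term, is realized: $\bR g_{*}\L_{W}\cong\E'^{\cdot}$ and $\bR g_{*}\L_{W}^{-1}\cong 0$ for a tower $g\colon W\to X$. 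Under the adjunction $g^{*}\dashv\bR g_{*}$ the chain map $d^{0}$ corresponds to a morphism of sheaves $\widehat{\psi}\colon g^{*}\E^{0}\to\L_{W}$ on $W$, and using $\bR g_{*}g^{*}\E^{0}\cong\E^{0}$ one gets $\E^{\cdot}\cong\bR g_{*}\bigl(\,[g^{*}\E^{0}\xrightarrow{\widehat{\psi}}\L_{W}]\,\bigr)$, the bracket being a two-term complex in degrees $0,1$. So it suffices to realize a two-term complex $[\G\xrightarrow{\psi}\L]$ over a scheme $W$, for a vector bundle $\G$ and a line bundle $\L$. (The base case $k=0$ is a single bundle $\E^{0}$: for $\operatorname{rank}\E^{0}\ge 2$ take $Z=\PP(\E^{0})$, $\L=\cO_{Z}(1)$; if $\E^{0}$ is a line bundle, take $Z=\PP_{X\times\PP^{1}}\!\bigl(\operatorname{pr}_{1}^{*}\E^{0}\oplus\cO_{\operatorname{pr}_{1}}(-1)\bigr)$ with $\L=\cO(1)$ of the last projection, since $\bR\operatorname{pr}_{1*}\cO_{\operatorname{pr}_{1}}(-1)\cong 0$.)

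Middle step (induction on $\operatorname{rank}\G$). Pull back $[\G\xrightarrow{\psi}\L]$ along the full flag bundle of $\G$ — a tower of projectivizations, harmless by the bookkeeping fact — so that $\G$ acquires a complete filtration $0=F_{0}\subset\cdots\subset F_{r}=\G$ with line-bundle quotients $\cL_{i}=F_{i}/F_{i-1}$. From $0\to F_{r-1}\to\G\to\cL_{r}\to 0$ one gets a triangle $[F_{r-1}\to\L]\to[\G\to\L]\to\cL_{r}\to[F_{r-1}\to\L][1]$. If, by induction on $r$, $[F_{r-1}\to\L]\cong\bR h_{*}\L_{W'}$ over a tower $h\colon W'\to W$ with $\bR h_{*}\L_{W'}^{-1}\cong 0$, then the connecting map corresponds to a class $\widehat{\partial}\in\Ext^{1}_{W'}(h^{*}\cL_{r},\L_{W'})$, and $[\G\to\L]\cong\bR h_{*}\cR$, where $\cR$ is the rank-two bundle on $W'$ that is the extension $0\to\L_{W'}\to\cR\to h^{*}\cL_{r}\to 0$ with class $\widehat{\partial}$. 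Taking $Z=\PP_{W'}(\cR)$, $\L_{Z}=\cO_{Z}(1)$ finishes this step, because $\bR\pi_{*}\cO_{Z}(1)\cong\cR$ and $\bR\pi_{*}\cO_{Z}(-1)\cong 0$ ($\cR$ has rank $2$). This leaves only the base $r=1$, a two-term complex of line bundles; twisting by $\cL_{1}^{-1}$ it becomes $[\cO_{W}\xrightarrow{\chi}\cM]$ for a section $\chi$ of a line bundle $\cM$ (and $\L_{Z}$ gets twisted back by $\cL_{1}$ at the end).

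Inner step and main obstacle. The delicate case — and the one that forces a tower of at least two projectivizations rather than a single bundle — is $[\cO_{W}\xrightarrow{\chi}\cM]$: the derived pushforward of a line bundle along a single projective bundle is always a shift of a locally free sheaf of the form $\operatorname{Sym}^{a}(\cdot)$, so it can never carry the torsion cohomology that $\operatorname{coker}\chi$ has in general. The remedy is a tower of two $\PP^{1}$-bundles. Let $W_{1}=W\times\PP^{1}$ with projection $\pi_{1}$ and relative $\cO_{\pi_{1}}(1)$; let $\cF$ be the rank-two bundle on $W_{1}$ given by the extension $0\to\operatorname{pr}_{1}^{*}\cM\otimes\cO_{\pi_{1}}(-2)\to\cF\to\cO_{W_{1}}\to 0$ whose class is $\chi$ under the isomorphisms $\Ext^{1}_{W_{1}}\!\bigl(\cO_{W_{1}},\operatorname{pr}_{1}^{*}\cM\otimes\cO_{\pi_{1}}(-2)\bigr)\cong H^{1}\!\bigl(W_{1},\operatorname{pr}_{1}^{*}\cM\otimes\cO_{\pi_{1}}(-2)\bigr)\cong H^{0}(W,\cM)$; put $W_{2}=\PP_{W_{1}}(\cF)$ with projection $\pi_{2}$ and $\L_{Z}=\cO_{\pi_{2}}(1)$. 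Then $\bR\pi_{2*}\cO_{\pi_{2}}(1)\cong\cF$ and $\bR\pi_{2*}\cO_{\pi_{2}}(-1)\cong 0$; applying $\bR\pi_{1*}$ to the defining sequence of $\cF$ and using $\bR\pi_{1*}\cO_{\pi_{1}}(-2)\cong\cO_{W}[-1]$ together with $\bR\pi_{1*}\cO_{W_{1}}\cong\cO_{W}$, the resulting triangle reads $\cM[-1]\to\bR\pi_{1*}\cF\to\cO_{W}\xrightarrow{\chi}\cM$, so $\bR\pi_{1*}\cF\cong[\cO_{W}\xrightarrow{\chi}\cM]$; hence $\bR(\pi_{1}\pi_{2})_{*}\L_{Z}\cong[\cO_{W}\xrightarrow{\chi}\cM]$ and $\bR(\pi_{1}\pi_{2})_{*}\L_{Z}^{-1}\cong 0$. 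Assembling the three inductions produces the tower $f\colon Z\to X$ of projectivizations and the line bundle $\L$ with the required properties. The only real labour is the cohomology computations on projective bundles and the verification that the extension classes $\widehat{\partial}$ and $\chi$ assemble to reproduce precisely the differentials of $\E^{\cdot}$; I expect this inner identification (encoding a sheaf map into an extension class on a $\PP^{1}$-bundle, straddling the vanishing of $\bR\pi_{*}\cO_{\pi}(-1)$) to be the conceptual crux of the proof.
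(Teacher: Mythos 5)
Your argument is correct, and it rests on the same core devices as the paper's proof: induction on the length of $\E^{\cdot}$ via the stupid truncation triangle, realization of the connecting morphism as an extension class of vector bundles which is then projectivized, the $(-)\times\PP^{1}$ with $\cO(-2)$ twist to move cohomological degree by one, and rank $\ge 2$ of the final bundle to force $\bR f_{*}\L^{-1}=0$. The one genuine structural difference is your middle step, and it is a detour the paper does not need. The paper never splits $\E^{0}$ into line bundles: having arranged, by the $\PP^{1}$-trick, that $\sigma^{\ge 1}\E^{\cdot}$ itself (not its shift) equals $\bR f_{n*}\L_{n}$ for a line bundle $\L_{n},$ it uses the adjunction isomorphism $\Ext^{1}(f_{n}^{*}\E^{0},\L_{n})\cong\Hom(\E^{0},\sigma^{\ge 1}\E^{\cdot}[1])$ to convert the differential $d^{0}$ directly into a single short exact sequence $0\to\L_{n}\to\F\to f_{n}^{*}\E^{0}\to 0$ of vector bundles; one projectivization of the rank-$(\operatorname{rk}\E^{0}+1)$ bundle $\F$ then completes the inductive step. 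Since $\Ext^{1}$ of a locally free sheaf of arbitrary rank by a line bundle already classifies honest extensions, your flag-bundle pullback and the induction on $\operatorname{rk}\G$ buy nothing and considerably enlarge the tower $Z$ (one flag bundle plus $\operatorname{rk}\G$ rank-two projectivizations per length-induction step, versus one $\PP^{1}$-factor plus one projectivization in the paper). In effect the two proofs insert the same degree-shifting $\PP^{1}$-factor at different places --- you at the bottom of your rank induction, the paper at the top of the length induction --- and both yield the three stated properties; yours is simply a longer route to the same construction.
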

\begin{dok}
First, we should note that the following construction does not work for the $0$\!--complex and for
a complex $\E^{\cdot}$ that is a line bundle. However, it can be easily improved. In such cases we
change $\E^{\cdot}$ to a quasi-isomorphic complex by adding an acyclic complex of the form $\F\stackrel{\id}{\to}\F.$

Now we will prove the proposition by induction on the length of the complex $\E^{\cdot}.$
If $\E^{\cdot}$ has only one term and it is a vector bundle $\E$ (of rank $>1$),  then we take the line bundle $\L=\cO(1)$ on the
projective bundle $f: \PP(\E^{\vee})\to X.$ As result we obtain that $\bR f_* \L= f_*\L\cong\E$
and $\bR f_* \L^{-1}=0.$

Assume that $\E^{\cdot}=\{\E^0\stackrel{d}{\lto}\E^1\}$ is a complex of vector bundles that has only two nontrivial terms in degree $0$ and $1.$
Taking $f_1: X_1=\PP(\E^{1\vee})\to X$ and $\L_1=\cO(1)$ we obtain that $\bR f_{1*}\L\cong\E^1$ as described above.
Now let $X_2=X_1\times\PP^1$ and $f_2$ is the projection on $X.$ Take $\L_2=\L_1\boxtimes\cO(-2).$
It is easy to see that $\bR \pr_{1*}\L_2$ has only one nontrivial term $\bR^1 \pr_{1*}\L_2$ that is isomorphic to $\L_1.$
Hence $\bR f_{2*}\L_2\cong \E^1[-1].$
Thus we obtain a sequence of isomorphisms
\begin{multline*}
\Ext^1_{X_2}(f_2^*\E^0, \L_2)\cong \Ext^1_{X_1}(f_1^*\E^0, \bR \pr_{1*}\L_2)\cong \Hom_{X_1}(f_1^*\E^0, \L_1)\cong\Hom_{X}(\E^0, \bR f_{1*}\L_1)\cong\\
\Hom_{X}(\E^0, \E^1).
\end{multline*}
Under this isomorphism the differential $d$ induces an element $e\in \Ext^1(f_2^*\E^0, \L_2).$
Let us consider the extension
\[
0\lto\L_2\lto\F\lto f_2^*\E^0\lto 0
\]
given by the element $e.$ Applying the functor $\bR f_{2*}$ to this short exact sequence we obtain an exact triangle of the form
\[
\bR f_{2*} \F\lto \E^0 \stackrel{\alpha}{\lto} \bR f_{2*}\L_2[1].
\]
By construction, $\bR f_{2*}\L_2[1]\cong \E^1$ and $\alpha=d.$ Therefore, $\bR f_{2*} \F$ is isomorphic to the complex $\E^{\cdot}.$
Finally, we consider $Z=\PP(\F^{\vee})$ with the natural morphism $f$ to $X$ and $\L=\cO_{\PP(\F^{\vee})}(1).$ We get that $\bR f_{*}\L\cong\E^{\cdot}.$
Since the rank of $\F$ is bigger than one, it is also evident that $\bR f_* \L^{-1}=0.$

The same trick works for any complex of vector bundles
\[
\E=\{\E^0\stackrel{d}{\lto}\E^1\lto\cdots\lto\E_k\}.
\]
Indeed, consider the stupid truncation $\sigma^{\ge 1}\E^{\cdot}.$
By induction we can assume that there is $f_{n-1}: X_{n-1}\to X$ and $\L_{n-1}$ on $X_{n-1}$ such that
$\bR f_{(n-1)*}\cong \sigma^{\ge 1}\E^{\cdot}[1].$

Now repeat the procedure described above.
Let $X_{n}=X_{n-1}\times\PP^1$ and $f_{n}$ be the projection on $X.$
Take $\L_{n}=\pr_1^*\L_{n-1}\boxtimes\cO(-2).$
We have $\bR f_{n*}\L_{n}\cong \sigma^{\ge 1}\E^{\cdot}.$
There is an isomorphisms
\[
\Ext^1(f_{n}^*\E^0, \L_{n})\cong \Ext^1(\E^0, \bR f_{n*}\L_n)\cong
\Hom(\E^0, \sigma^{\ge 1}\E^1[1])
\]
Under this isomorphism the differential $d:\E_0\to \sigma^{\ge 1}\E^1[1]$ induces an element $e\in \Ext^1(f_{n}^*\E^0, \L_{n}).$
Let us consider the extension
\[
0\lto\L_n\lto\F\lto f_n^*\E^0\lto 0
\]
given by the element $e.$ Applying the functor $\bR f_{n*}$ to this short sequence we obtain an exact triangle of the form
\[
\bR f_{n*} \F\lto \E^0 \stackrel{\alpha}{\lto} \bR f_{n*}\L_n[1].
\]
By construction, $\bR f_{n*}\L_2[1]\cong \sigma^{\ge 1}\E^1[1]$ and $\alpha=d.$
Therefore, $\bR f_{n*} \F$ is isomorphic to the complex $\E^{\cdot}.$
Finally, we consider $Z=\PP(\F^{\vee})$ with the natural morphism $f$ to $X$ and $\L=\cO(1).$
We get that $\bR f_{*}\L\cong\E^{\cdot}.$ By construction, the scheme $f:Z\to X$ is a sequence of
projective bundles. Moreover, we have $\bR f_*\L^{-1}=0,$ because the rank of $\F$ is bigger than one and $\bR p_*\L^{-1}=0,$
where $p$ is the projection of $Z=\PP(\F^{\vee})$ to $X_n.$
\end{dok}
\begin{remark}
{\rm
Assume that a quasi-compact and separated scheme $X$ has enough locally free sheaves, i.e. for any quasi-coherent sheaf of finite type
$\F$ there is an algebraic vector bundle $\E$ on $X$ and an epimorphism $\E\twoheadrightarrow\F.$
In this case, any perfect complex is quasi-isomorphic to
a strict perfect complex (see \cite[2.3.1]{TT}) and, hence, Proposition \ref{line} can be applied to any perfect complex up to a shift
in the triangulated category.
Note that any quasi-projective scheme and any separated regular noetherian scheme have enough locally free sheaves.
}
\end{remark}

\subsection{Blowups and gluing of smooth projective schemes}

Let $X_1$ and $X_2$ be two smooth irreducible projective schemes. Let $\E^{\cdot}$ be a perfect complex
on the product $X_1\times X_2.$ Since $X_i$ are projective any perfect complex on $X_1\times X_2$ is globally (not only locally) quasi-isomorphic to
 a strictly perfect complex, i.e. a bounded complex of locally free sheaves of finite type (see, e.g. \cite[2.3.1]{TT}).
A strictly perfect complex will be also called a bounded complex of vector bundles.

Applying a shift in the triangulated category we can assume that  $\E^{\cdot}\in\prf (X_1\times X_2)$ is
a complex of the form $\{\E^0{\to}\E^1\to\cdots\to\E_k\}.$
By Proposition \ref{line} there is a scheme $Z$ with a morphism $f: Z\to X_1\times X_2$
and a line bundle $\L$ on $Z$ such that $\bR f_* \L\cong \E^{\cdot}.$
Let us fix such $Z$ and $f.$ By the construction of $Z,$ since $X_1$ and $X_2$ are smooth projective the scheme $Z$ is also
smooth and projective and the morphism $f$ is smooth.

Denote by $q_1$ and $q_2$ the canonical morphisms from $Z$ to $X_1$ and $X_2$ respectively.
Fix very ample line bundles $\M_1$ and $\M_2$ on $X_1$ and $X_2$ respectively.
Using Serre's theorem we can find a very ample line bundle $\L'$ on $Z$ such that
the three line bundles $\L_1=q_1^*\M_1^{-1}\otimes\L',$ $\L_2=q_2^*\M_2^{-1}\otimes\L'$ and $
\L_3=\L^{-1}\otimes \L'$ are very ample as well.

Denote by $s_1, s_2, s_3$ the closed immersions of $Z$ to projective spaces
$\PP^{n_1}, \PP^{n_2}, \PP^{n_3}$ induced by $\L_1, \L_2,$ and $\L_3$ respectively.
The product map $i_1=(q_1, s_1, s_3)$ gives a closed immersion of $Z$ to the projective scheme
$P_1=X_1\times \PP^{n_1}\times\PP^{n_3}.$ Similarly, we obtain a closed immersion $i_2=(q_2, s_2, s_3)$ of $Z$ to
$P_2=X_2\times \PP^{n_2}\times\PP^{n_3}.$ It directly follows from construction
that there are isomorphisms
\begin{equation}\label{line_bundles}
\begin{aligned}
& i_1^* (\M_1\boxtimes\cO(1)\boxtimes\cO(-1)) \cong  i_2^* (\M_2\boxtimes\cO(1)\boxtimes\cO(-1)) \cong  \L,\\
& i_1^* (\M_1\boxtimes\cO(1)\boxtimes\cO(1)) \cong i_2^* (\M_2\boxtimes\cO(1)\boxtimes\cO(1)) \cong  \L'\otimes\L_3
\end{aligned}
\end{equation}
Consider these two closed immersions $i_1: Z\to P_1$ and $i_2: Z\to P_2.$ It is known that the gluing $P_1\bigsqcup_{Z} P_2$
of $P_1$ and $P_2$ along $Z$ is a scheme (\cite[3.9]{Schwede} or \cite[5.4]{F}). It has two irreducible components that meet
along $Z.$ Denote this gluing by $T.$ The scheme $T$ is a pushout (fibred coproduct) in the category
of schemes. In our case we also can argue that the scheme $T$ is projective.
\begin{lemma} The scheme $T=P_1\bigsqcup_{Z} P_2$ is projective.
\end{lemma}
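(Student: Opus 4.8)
The plan is to exhibit an ample line bundle on $T$ and then to conclude by the standard fact that a proper scheme over a field which carries an ample line bundle is projective. Recall first the structure of $T=P_1\bigsqcup_Z P_2$: being a pushout in the category of schemes along the two closed immersions $i_1\colon Z\hookrightarrow P_1$ and $i_2\colon Z\hookrightarrow P_2,$ it comes with closed immersions $j_1\colon P_1\hookrightarrow T$ and $j_2\colon P_2\hookrightarrow T$ such that $T=j_1(P_1)\cup j_2(P_2),$ with scheme-theoretic intersection the image of $Z,$ and $\cO_T=\cO_{P_1}\times_{\cO_Z}\cO_{P_2}$ (see \cite{Schwede, F}). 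In particular $T$ is separated and of finite type over $\kk,$ and since it is covered by the two closed subschemes $P_1$ and $P_2,$ each projective over $\kk,$ the structure morphism $T\to\Spec\kk$ is universally closed; hence $T$ is proper over $\kk.$ It therefore remains only to produce an ample line bundle on $T.$

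I would obtain it by gluing. Invertible sheaves on the pushout $T$ are described by triples $(\mathcal{N}_1,\mathcal{N}_2,\varphi)$ consisting of $\mathcal{N}_k\in\Pic(P_k)$ together with an isomorphism $\varphi\colon i_1^*\mathcal{N}_1\stackrel{\sim}{\to}i_2^*\mathcal{N}_2$ — this is the invertible part of the description of $\Qcoh(T)$ as the category of such triples (cf.\ \cite{F}), the associated sheaf on $T$ being the gluing of $j_{1*}\mathcal{N}_1$ and $j_{2*}\mathcal{N}_2$ over $Z$ via $\varphi.$ The second isomorphism in (\ref{line_bundles}) says exactly that $i_1^*\bigl(\M_1\boxtimes\cO(1)\boxtimes\cO(1)\bigr)\cong\L'\otimes\L_3\cong i_2^*\bigl(\M_2\boxtimes\cO(1)\boxtimes\cO(1)\bigr),$ so fixing such an isomorphism yields a line bundle $\mathcal{N}$ on $T$ with
\[
j_1^*\mathcal{N}\cong\M_1\boxtimes\cO(1)\boxtimes\cO(1),\qquad j_2^*\mathcal{N}\cong\M_2\boxtimes\cO(1)\boxtimes\cO(1).
\]

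Finally I would check that $\mathcal{N}$ is ample. On $P_k=X_k\times\PP^{n_k}\times\PP^{n_3}$ the restriction $j_k^*\mathcal{N}$ is the exterior tensor product of the very ample line bundles $\M_k$ on the projective scheme $X_k$ and $\cO(1)$ on the two projective spaces, hence is very ample on $P_k$ (pull back $\cO(1)$ along the product of the three projective embeddings composed with a Segre embedding). Since $T$ is noetherian and $T=P_1\cup P_2$ with $P_1,P_2$ closed subschemes, a line bundle on $T$ is ample if and only if its restriction to each of $P_1$ and $P_2$ is ample; therefore $\mathcal{N}$ is ample on $T.$ Being a proper $\kk$\!-scheme carrying an ample line bundle, $T$ is \emph{projective}. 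The only points needing care, as opposed to routine verification, are the descent of the compatible pair $(\mathcal{N}_1,\mathcal{N}_2,\varphi)$ to a genuine line bundle on the pushout $T$ and the criterion that ampleness may be tested on a finite closed cover; both are standard, and once they are in hand the lemma is immediate from the isomorphisms (\ref{line_bundles}) already recorded.
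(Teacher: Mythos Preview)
Your argument is correct and matches the paper's proof essentially verbatim: the paper also glues the very ample bundles $\M_1\boxtimes\cO(1)\boxtimes\cO(1)$ and $\M_2\boxtimes\cO(1)\boxtimes\cO(1)$ along their common restriction $\L'\otimes\L_3$ to obtain a line bundle $\N$ on $T,$ and then invokes \cite[2.6.2]{EGA3} (and \cite[6.3]{F}) for the fact that a line bundle on a noetherian scheme whose restriction to each member of a finite closed cover is ample is itself ample. Your additional remarks on properness of $T$ and the descent of line bundles along the pushout are correct elaborations of points the paper leaves implicit.
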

\begin{dok} Consider very ample line bundles $\M_1\boxtimes\cO(1)\boxtimes\cO(1)$ and $\M_2\boxtimes\cO(1)\boxtimes\cO(1)$
on $P_1$ and $P_2.$ Since their restrictions on $Z$ are isomorphic to $\L'\otimes\L_3$ we can glue them into a line bundle
$\N$ on $T.$
It follows from \cite[2.6.2]{EGA3} that $\N$ is ample on $T$ (see also \cite[6.3]{F}).
\end{dok}

Consider a closed immersion $j: T\hookrightarrow \PP^N$ to a projective space $\PP^N.$
Denote by $j_1, j_2$ the induced closed immersions of $P_1, P_2$ to $\PP^N.$
Consider the blowup $V'$ of $\PP^N$ along $P_1.$ Take the strict transform $\wt{P}_2$ of $P_2$ in $V'.$
It is the blowup of $P_2$ along the subvariety $Z.$
Denote by $V$ the blowup of $V'$ along $\wt{P}_2.$
This construction can be illustrated by the following diagram (\ref{diagram})

\begin{align}\label{diagram}
\xymatrix{
& \wt{E}_1 \ar[d]_{\rho} \ar@{^{(}->}[r]^{\wt{e}_1} & V \ar[d]_{\pi} & E_2
\ar@{_{(}->}[l]_{e_2} \ar[d]^{h_2}
\\
D \ar[d]_{g} \ar@{^{(}->}[r]^{d_1}& E_1 \ar[d]_{h_1} \ar@{^{(}->}[r]^{e_1} &
V' \ar[d]_{\pi'} & \wt{P}_2 \ar[d]^{\tau} \ar@{_{(}->}[l]_{\wt{j}_2}
 & D \ar[d]^{g} \ar@{_{(}->}[l]_{d_2}
\\
Z \ar@{^{(}->}[r]^{i_1} \ar[dr]_{q_1} & P_1 \ar[d]^{p_1} \ar@{^{(}->}[r]^{j_1} & \PP^N & P_2 \ar[d]_{p_2} \ar@{_{(}->}[l]_{j_2} &
Z \ar@{_{(}->}[l]_{i_2} \ar[dl]^{q_2}
\\
& X_1 && X_2
}
\end{align}
where $h_1: E_1\to P_1$ is the exceptional divisor of the first blowup, $g: D\to Z$ is the exceptional divisor
of the induced blowup $\tau: \wt{P}_2\to P_2,$ $\wt{E}_1$ is the strict transformation of the divisor $E_1$ under the second blowup,
and $h_2: E_2\to \wt{P}_2$ is the exceptional divisor
of the second blowup.

Since we started from smooth projective schemes $X_1$ and $X_2,$ we obtain smooth and projective schemes $Z,$ $P_1$ and $P_2.$
A blowup of a smooth projective scheme along a smooth closed subscheme  brings to a smooth and projective scheme (see, e.g.
\cite[Th.8.1.19]{Liu}). Thus, we obtain

\begin{lemma}
All schemes in diagram (\ref{diagram}) are projective and smooth.
\end{lemma}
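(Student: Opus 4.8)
The plan is to walk through diagram (\ref{diagram}) from the bottom row upward, invoking at each stage two standard facts: (i) a product of smooth projective $\kk$\!--schemes is smooth and projective; and (ii) the blowup of a smooth projective scheme along a smooth closed subscheme is again smooth and projective, and its exceptional divisor is the projectivization of the normal bundle of the center, hence a projective bundle over a smooth projective base and so itself smooth and projective (see, e.g., \cite[Th.~8.1.19]{Liu}). Everything in the diagram is obtained from $X_1$ and $X_2$ by iterating products, projectivizations of vector bundles, and blowups along smooth centers, so the statement will follow by repeatedly applying (i) and (ii).

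First I would handle the bottom two rows. The schemes $X_1,X_2$ are smooth and projective by hypothesis. The scheme $Z$ was produced by Proposition \ref{line} as an iterated projectivization $Z=X_n\to X_{n-1}\to\cdots\to X_0=X_1\times X_2$, where each $X_{p+1}=\PP(\F_p)$ for a vector bundle $\F_p$ on $X_p$; since $X_1\times X_2$ is smooth and projective by (i), an induction using (ii) shows $Z$ is smooth and projective. Likewise $P_1=X_1\times\PP^{n_1}\times\PP^{n_3}$ and $P_2=X_2\times\PP^{n_2}\times\PP^{n_3}$ are products of smooth projective schemes, hence smooth and projective, and $i_1\colon Z\hookrightarrow P_1$, $i_2\colon Z\hookrightarrow P_2$ are closed immersions of smooth schemes. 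Of course $\PP^N$ is smooth and projective.

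Next I would climb the blowup tower. Since $P_1$ is a smooth closed subscheme of the smooth projective $\PP^N$, the blowup $V'=\operatorname{Bl}_{P_1}\PP^N$ is smooth and projective, with exceptional divisor $E_1=\PP(\N_{P_1/\PP^N})$ smooth and projective and $h_1\colon E_1\to P_1$ a projective bundle. For the strict transform $\wt{P}_2$ one checks that, because inside $T=P_1\sqcup_Z P_2\subset\PP^N$ the two components meet exactly along $Z$, the scheme-theoretic intersection $P_1\cap P_2$ equals $Z$; hence $\wt{P}_2=\operatorname{Bl}_{P_1\cap P_2}P_2=\operatorname{Bl}_Z P_2$, which is smooth and projective since $Z$ is a smooth closed subscheme of the smooth projective $P_2$, with $\tau\colon\wt{P}_2\to P_2$ a blowup morphism and exceptional divisor $D=\PP(\N_{Z/P_2})$ smooth and projective, $g\colon D\to Z$ a projective bundle. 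Now $\wt{P}_2$ is a smooth closed subscheme of the smooth projective $V'$, so $V=\operatorname{Bl}_{\wt{P}_2}V'$ is smooth and projective, $E_2=\PP(\N_{\wt{P}_2/V'})$ is smooth and projective with $h_2\colon E_2\to\wt{P}_2$ a projective bundle, and the strict transform $\wt{E}_1$ of $E_1$ under this second blowup is $\operatorname{Bl}_{E_1\cap\wt{P}_2}E_1=\operatorname{Bl}_D E_1$, which is smooth and projective since $D$ is a smooth closed subscheme of $E_1$, with $\rho\colon\wt{E}_1\to E_1$ a blowup morphism. The remaining arrows $d_1,d_2,e_1,e_2,\wt{e}_1,\wt{j}_2,j_1,j_2$ are the evident closed immersions, and $p_1,p_2,q_1,q_2$ the evident projections, so every scheme appearing in (\ref{diagram}) has been shown to be smooth and projective.

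The only point that is not a mechanical application of (i) and (ii) is the bookkeeping around the strict transform $\wt{P}_2$: one must verify that the scheme-theoretic intersection $P_1\cap P_2$ in $\PP^N$ really is the smooth scheme $Z$ (so that $\tau\colon\wt{P}_2\to P_2$ is a blowup with smooth center and $\wt{P}_2$ is smooth), and similarly that $E_1\cap\wt{P}_2=D$ is smooth in $E_1$. Both follow from the pushout description of $T$: the ideal of $P_2\cap P_1$ in $P_2$ is the pullback under $j_2$ of the ideal of $P_1$, and by the construction of $T=P_1\sqcup_Z P_2$ along the closed immersions $i_1,i_2$ this ideal cuts out exactly $i_2(Z)$, which is already known to be smooth. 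I expect this intersection computation to be the main — and essentially the only — obstacle; once it is in place, the rest is routine.
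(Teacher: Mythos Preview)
Your proposal is correct and follows the same approach as the paper, which simply records that $Z,\,P_1,\,P_2$ are smooth projective (as products and iterated projective bundles starting from $X_1,X_2$) and then invokes the fact that blowing up a smooth projective scheme along a smooth closed subscheme yields a smooth projective scheme \cite[Th.~8.1.19]{Liu}. The paper merely asserts that $\wt{P}_2$ is the blowup of $P_2$ along $Z$ without justification, so your identification of $P_1\cap P_2=Z$ via the pushout description of $T$ is exactly the missing bookkeeping and is handled correctly.
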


Now let us analyze morphisms in our diagram (\ref{diagram}) and functors induced by them.

\begin{proposition}\label{functors}
In the diagram (\ref{diagram}) the following properties of morphisms hold:
\begin{enumerate}
\item[1)] the morphisms $g, h_1, h_2, p_1, p_2$ are projectivizations of vector bundles, and the functors $g^*, h_1^*, h_2^*, p_1^*, p_2^*$
are fully faithful;
\item[2)] the morphisms $\pi', \pi, \tau, \rho$ are blowups along smooth centers, and the exact functors
$\bL {\pi'}^{*},$ $\bL\pi^*,$ $\bL\tau^*,$ and $\bL\rho^*$ are fully faithful;
\item[3)] functors of the form $\bR d_{2*}(\K\otimes g^*(-)),\; \bR e_{1*}(\K\otimes h_1^*(-)),\; \bR e_{2*}(\K\otimes h_2^*(-)),$
where $\K$ is a line bundle on $D, E_1, E_2,$ respectively, are fully faithful;
\item[4)] the functors $\bR d_{2*}, \bR e_{1*}, \bR e_{2*}$ have right adjoints
$d_2^{\flat}, e_1^{\flat}, e_2^{\flat}$ and there are isomorphisms
\[\hspace*{1cm}
d_2^{\flat}\cong \bL d_2^*(\cO(D)\otimes (-))[-1],\quad
e_1^{\flat}\cong \bL e_1^*(\cO(E_1)\otimes (-))[-1],\quad
e_2^{\flat}\cong \bL e_2^*(\cO(E_2)\otimes (-))[-1].
\]
\end{enumerate}
\end{proposition}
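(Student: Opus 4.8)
The plan is to recognize each of the four assertions as an instance of a standard result in the homological geometry of projective bundles and of blowups, once the morphisms in diagram~(\ref{diagram}) have been unwound.

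For part 1) I would first note that $h_1\colon E_1\to P_1$, $h_2\colon E_2\to\wt P_2$ and $g\colon D\to Z$ are, by construction, the exceptional divisors of the blowups $\pi'$, $\pi$ and $\tau$ along the regularly embedded smooth centres $P_1\subset\PP^N$, $\wt P_2\subset V'$ and $Z\subset P_2$, hence each is the projectivization of a vector bundle (the normal bundle of the centre) and so a projective bundle; and that $p_1\colon X_1\times\PP^{n_1}\times\PP^{n_3}\to X_1$ and $p_2\colon X_2\times\PP^{n_2}\times\PP^{n_3}\to X_2$ are compositions of projectivizations of trivial bundles. Full faithfulness of the five pullbacks then follows from the single fact $\bR\varpi_*\cO_{\PP(\F)}\cong\cO_Y$ for a projectivization $\varpi\colon\PP(\F)\to Y$, together with the projection formula $\bR\Hom(\varpi^*A,\varpi^*B)\cong\bR\Hom(A,B\otimes\bR\varpi_*\cO)\cong\bR\Hom(A,B)$, composed when $\varpi$ is a tower.

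For part 2), the maps $\pi'$, $\tau$ and $\pi$ are blowups along smooth centres directly by construction ($\pi$ because $\wt P_2$, being the blowup of the smooth $P_2$ along the smooth $Z$, is itself smooth), and $\rho$ is the restriction of $\pi$ to the strict transform $\wt E_1$ of the divisor $E_1\subset V'$, which I would identify with the blowup of $E_1$ along the smooth subscheme $D=E_1\cap\wt P_2$; full faithfulness of $\bL{\pi'}^*$, $\bL\pi^*$, $\bL\tau^*$ and $\bL\rho^*$ is then Orlov's theorem on the derived category of a blowup \cite{Blow}. For part 3), let $j$ denote one of the divisor inclusions $d_2$, $e_1$, $e_2$ and $q$ the corresponding projective bundle map $g$, $h_1$, $h_2$ onto a centre of codimension $c\ge 2$; the Koszul resolution $0\to\cO(-D)\to\cO\to\cO_{D}\to0$ gives $\bL j^*\bR j_*G\cong G\oplus\bigl(G\otimes\cO_{D}(-D)|_{D}\bigr)[1]$ for $G$ supported on $D$, so by adjunction $\bR\Hom(\bR j_*G,\bR j_*G')\cong\bR\Hom_{D}(G,G')\oplus\bR\Hom_{D}(G\otimes\cO_{D}(-D)|_{D},G')[-1]$; taking $G=\K\otimes q^*A$ and $G'=\K\otimes q^*B$, the twist by $\K$ cancels on both sides and the second summand becomes $\bR\Gamma\bigl(C,\,A^{\vee}\otimes B\otimes\bR q_*(\cO_{D}(D)|_{D})\bigr)$, which vanishes because $\cO_{D}(D)|_{D}$ restricts to $\cO(-1)$ on the fibres of $q$ and these are projective spaces of positive dimension. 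For part 4), $\bR j_*$ has right adjoint $j^{!}$, and for the inclusion of a smooth Cartier divisor Grothendieck duality gives $j^{!}(-)\cong\bL j^*(-)\otimes\omega_{j}[-1]$ with $\omega_{j}\cong\cO_{D}(D)$, which rewrites as $\bL j^*\bigl(\cO(D)\otimes(-)\bigr)[-1]$; applying this to $d_2$, $e_1$, $e_2$ produces the three displayed formulas.

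The step that genuinely requires work is the claim in part 2) that $\rho\colon\wt E_1\to E_1$ is a blowup along a smooth centre: this needs a local computation showing that the strict transform of the divisor $E_1$ under the blowup of $V'$ along $\wt P_2$ equals $\mathrm{Bl}_{E_1\cap\wt P_2}E_1$ with $E_1\cap\wt P_2$ smooth --- equivalently that $E_1$ and $\wt P_2$ meet transversally, in the codimension forced by $P_1\cap P_2=Z$ --- and it is precisely here that the special choice of the three very ample line bundles $\L_1$, $\L_2$, $\L_3$ in the construction enters. One should also keep track of the assumption of codimension $\ge 2$ underlying part 3), which holds here because the centres $Z\subset P_2$, $P_1\subset\PP^N$ and $\wt P_2\subset V'$ all have codimension at least two.
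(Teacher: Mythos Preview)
Your argument is essentially correct and matches the paper's proof, which is very brief: parts 1) and 2) are reduced to $\bR f_*\cO=\cO$ together with the projection formula, part 3) is a citation to \cite{Blow,O2}, and part 4) is the duality formula for an lci closed immersion. Your write-up is a more explicit version of the same ideas.

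Two points deserve correction. First, in part 3) the formula $\bL j^*\bR j_*G\cong G\oplus\bigl(G\otimes\cO_D(-D)\bigr)[1]$ is not in general a direct sum: the Koszul resolution only gives an exact triangle
\[
G\otimes\cO_D(-D)[1]\lto \bL j^*\bR j_*G\lto G\lto G\otimes\cO_D(-D)[2].
\]
This does not damage your conclusion, since after applying $\bR\Hom(-,G')$ the term coming from $G\otimes\cO_D(-D)$ vanishes by your $\bR q_*\cO(-1)=0$ argument, and the remaining map is the adjunction map, so full faithfulness follows from the triangle without any splitting.

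Second, your claim that the transversality of $E_1$ and $\wt P_2$ (hence the identification of $\rho$ with $\mathrm{Bl}_D E_1$) is where ``the special choice of the three very ample line bundles $\L_1,\L_2,\L_3$ enters'' is a misattribution. Those line bundles were chosen solely to obtain the pullback identities (\ref{line_bundles}), which are used later to recover $\L$ and hence $\E^{\cdot}$. The intersection $E_1\cap\wt P_2=D$ and its Tor-independence come from general blowup geometry: on any blowup, $\cO(-E_1)$ is the inverse image ideal of the centre, so its restriction to a strict transform is exactly the ideal of the exceptional divisor there; since $D$ is a smooth Cartier divisor in the smooth $\wt P_2$, the intersection is automatically transversal. (The paper in fact uses this Cartesian square and its Tor-independence in the proof of Proposition~\ref{main_prop} rather than here.) So the verification you flag is real, but it is independent of the embedding of $T$ in $\PP^N$.
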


\begin{dok}
1) and 2) follow from the construction and the projection formula, because the derived direct image of the structure sheaf
under each of these morphisms is isomorphic to the structure sheaf of a target.
3) is proved in \cite[4.2]{Blow} or \cite[2.2.7]{O2}.
4) follows from the fact that for any closed immersion $i$ of locally a complete intersection $Z$ to $Y$
the right adjoint $i^{\flat}$ to $\bR i_{*}$ has
the form $\bL i^*(\cdot) \otimes \omega_{Z/Y}[-r],$ where
$\omega_{Z/Y}\cong \Lambda^{r} N_{Z/Y}$ and $r$ is the codimension.
\end{dok}

\begin{theorem}\label{main}
Let $X_1$ and $X_2$ be smooth irreducible projective schemes and let $\E^{\cdot}$ be a perfect complex
on the product $X_1\times X_2.$ Let $V$ be a smooth projective scheme constructed above.
Then the DG category $\prfdg X_1 \underset{\E^{\cdot}}{\oright}\prfdg X_2$ is quasi-equivalent to a full DG subcategory
of $\prfdg V$ and, hence,
the triangulated category $\Ho(\prfdg X_1 \underset{\E^{\cdot}}{\oright}\prfdg X_2)$ is admissible in
$\prf V.$
\end{theorem}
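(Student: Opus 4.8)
The plan is to produce, inside $\prf V$, two fully faithful copies of $\prf X_1$ and of $\prf X_2$ that are semi-orthogonal in the correct order and carry the correct morphism complexes, to recognise the subcategory they generate as the gluing by means of Proposition~\ref{gluing_semi-orthogonal}, and to conclude admissibility from Proposition~\ref{admissible}. First I record what has to appear: by Propositions~\ref{dg_semiorhtogonal} and~\ref{gluing_semi-orthogonal} together with the description of $\mS_{\E^{\cdot}}$ preceding Proposition~\ref{geometric}, the DG category $\prfdg X_1\underset{\E^{\cdot}}{\oright}\prfdg X_2$ is quasi-equivalent to any enhancement of a triangulated category with a semi-orthogonal decomposition $\langle\prf X_1,\prf X_2\rangle$ in which the morphism complex from $A\in\prf X_1$ to $B\in\prf X_2$ represents $\Ext^{*}_{X_1\times X_2}(\mpr_1^{*}A,\,\mpr_2^{*}B\otimes\E^{\cdot})$. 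Since $\bR f_{*}\L\cong\E^{\cdot}$ while $\bR f_{*}\cO_Z\cong\cO_{X_1\times X_2}$, the projection formula gives
\[
\Ext^{*}_{X_1\times X_2}\bigl(\mpr_1^{*}A,\ \mpr_2^{*}B\otimes\E^{\cdot}\bigr)\ \cong\ \Ext^{*}_{Z}\bigl(q_1^{*}A,\ q_2^{*}B\otimes\L\bigr),
\]
so it is this last $\Ext$ over $Z$ that must be reproduced in $\prf V$.

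To build the two embeddings I would compose the fully faithful functors furnished by Proposition~\ref{functors}. For $X_1$ I take the functor $\Phi_1\colon\prf X_1\to\prf V$ obtained from $p_1^{*}\colon\prf X_1\to\prf P_1$ followed by a pushforward along the (strict transform of the) first exceptional divisor together with $\bL\pi^{*}$, and for $X_2$ the functor $\Phi_2\colon\prf X_2\to\prf V$ obtained from $p_2^{*}\colon\prf X_2\to\prf P_2$ followed by $\bL\tau^{*}$ and a pushforward along $E_2$; in each case the construction is twisted by line bundles on the intermediate schemes whose precise normalisation is exactly what the identities~(\ref{line_bundles}) were set up to pin down. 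Every building block — $p_i^{*}$ and $h_i^{*}$ (part~1), $\bL\pi^{*}$, $\bL\pi'^{*}$, $\bL\tau^{*}$, $\bL\rho^{*}$ (part~2), and the twisted blow-up embeddings $\bR e_{i*}(\K\otimes h_i^{*}(-))$, $\bR d_{2*}(\K\otimes g^{*}(-))$ (part~3) — is fully faithful, so $\Phi_1$ and $\Phi_2$ are fully faithful and $\Hom_V(\Phi_i A,\Phi_i A'[l])\cong\Hom_{X_i}(A,A'[l])$.

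The cross-Hom computations are the heart of the matter. The vanishing $\Hom_V(\Phi_2 B,\Phi_1 A[l])=0$ should follow from the order of the two blow-ups in diagram~(\ref{diagram}): the image of $\Phi_2$ lies in one of the exceptional-divisor blocks of the semi-orthogonal decomposition of $\prf V$ associated to $\pi\colon V\to V'$, whereas that of $\Phi_1$ lies in $\bL\pi^{*}\prf V'$; here one uses that $\bR h_{2*}$ of the relevant relative twist vanishes because the projective bundle $h_2$ has large relative dimension, $N$ and hence $c_2=\codim(P_2,\PP^N)$ having been chosen large. For the remaining complex I would push forward by $\bR\pi_{*}$ and invoke the adjunctions $e_1^{\flat}$, $d_2^{\flat}$ of part~4) of Proposition~\ref{functors}: using the clean base change for the transversal, Tor-independent intersection $E_1\cap\wt P_2=D$ (a divisor in $\wt P_2$) rewrites $\Hom_V(\Phi_1 A,\Phi_2 B[l])$ as a $\Hom$ over $D$ between $g^{*}q_1^{*}A$ and $g^{*}q_2^{*}B$ twisted by an explicit line bundle, and the projection formula for $g\colon D\to Z$ then collapses it to $\Hom_Z\!\bigl(q_1^{*}A,\ q_2^{*}B\otimes\bR g_{*}(\text{that line bundle})\bigr)$. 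The point of~(\ref{line_bundles}) — this is why the shared factor $\PP^{n_3}$ and the auxiliary very ample bundles $\L_1,\L_2,\L_3$ were introduced — is that with the correct twists this surviving direct image is $\L$ itself, in cohomological degree $0$, so that $\Hom_V(\Phi_1 A,\Phi_2 B[l])\cong\Ext^{l}_Z(q_1^{*}A,q_2^{*}B\otimes\L)$, which by the first paragraph is exactly what is wanted. I expect this bookkeeping — following the normal bundles, absorbing the shifts built into $d_2^{\flat},e_1^{\flat},e_2^{\flat}$, and choosing the twists so that every auxiliary projective-bundle direction is killed by a Bott-type vanishing — to be the main technical obstacle.

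Finally, let $\dN\subset\prfdg V$ be the smallest idempotent-complete pretriangulated full DG subcategory containing $\Phi_1(\prfdg X_1)$ and $\Phi_2(\prfdg X_2)$. By the preceding steps $\Ho(\dN)$ carries a semi-orthogonal decomposition $\langle\prf X_1,\prf X_2\rangle$ whose gluing bimodule is quasi-isomorphic to $\mS_{\E^{\cdot}}$, so by Proposition~\ref{gluing_semi-orthogonal} — and Proposition~\ref{gluing_quasifunctors} to identify the bimodules on the nose — $\dN$ is quasi-equivalent to $\prfdg X_1\underset{\E^{\cdot}}{\oright}\prfdg X_2$; this is the first assertion. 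Since under Theorem~\ref{toen} the perfect complex $\E^{\cdot}$ corresponds to a perfect $\prfdg X_2^{\op}\otimes\prfdg X_1$-module and the $\prfdg X_i$ are smooth (Proposition~\ref{smooth_proper}), this gluing is smooth by Theorem~\ref{smooth_glue}, hence regular by Proposition~\ref{smooth_regular}, and it is idempotent complete; as $\prf V$ is proper (Proposition~\ref{proper}), Proposition~\ref{admissible} shows that $\Ho(\dN)\cong\Ho\bigl(\prfdg X_1\underset{\E^{\cdot}}{\oright}\prfdg X_2\bigr)$ is admissible in $\prf V$, completing the proof.
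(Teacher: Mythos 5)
Your proposal follows essentially the same route as the paper: the same two composite functors built from diagram~(\ref{diagram}) (pushforward off $E_1$ twisted by $\cO_{E_1}(E_1)$ then $\bL\pi^{*}$ for $X_1$; $\bL\tau^{*}$ then pushforward off $E_2$, twisted by $\R$, for $X_2$), the same semi-orthogonality argument via $\bR h_{2*}\cO_{E_2}(E_2)=0$, the same Tor-independent base change across the square $D,E_1,\wt P_2,V'$ collapsing the cross-Homs to $\Hom_Z(q_1^{*}A,q_2^{*}B\otimes\L)\cong\Hom_{X_1\times X_2}(\pr_1^{*}A,\pr_2^{*}B\otimes\E^{\cdot})$, and the same final assembly via Propositions~\ref{gluing_semi-orthogonal}, \ref{gluing_quasifunctors}, Theorem~\ref{smooth_glue} and Proposition~\ref{admissible}. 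The twist bookkeeping you flag as the remaining obstacle is exactly what the paper's Proposition~\ref{main_prop} carries out, using the normalisations fixed in~(\ref{line_bundles}); your outline is correct.
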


Consider the DG categories $\prfdg X_1$ and $\prfdg X_2$ and the following composition quasi-functors
\begin{equation}
\mPhi:=\mpi^* \me_{1*}( \cO_{E_1}(E_1)\otimes \mh_1^* \mp_1^*(-)), \quad\text{and}\quad
\mPsi:=\me_{2*} \mh_2^* \mtau^* (\mp_2^*(-)\otimes \R)[1]
\end{equation}
from $\prfdg X_1$ and $\prfdg X_2$ to $\prfdg V$ respectively, where $\cO_{E_1}(E_1)$ is the restriction
of the line bundle $\cO(E_1)$ from $V'$ to $E_1,$ and $\R\cong \M_2\boxtimes\cO(1)\boxtimes\cO(-1)$ is a line bundle on $P_2.$
These quasi-functors induce exact composition functors
\begin{equation}\label{funct}
\Phi:=\bL\pi^* \bR e_{1*}( \cO_{E_1}(E_1)\otimes h_1^* p_1^*(-)), \quad\text{and}\quad
\Psi:=\bR e_{2*} h_2^* \bL\tau^* (p_2^*(-)\otimes \R)[1]
\end{equation}
from the triangulated categories $\prf X_1$ and $\prf X_2$ to the triangulated category $\prf V.$

\begin{lemma}\label{lemma_orthogonal}
The functors $\Phi$ and $\Psi$ are fully faithful and the subcategories
$\Phi(\prf X_1)$ and $\Psi(\prf X_2)$ are semi-orthogonal so that
$\Phi(\prf X_1)$ is in the right orthogonal $\Psi(\prf X_2)^{\perp}.$
\end{lemma}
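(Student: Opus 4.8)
The plan is to prove the two assertions in turn, using only the properties of the maps recorded in Proposition~\ref{functors}.

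\emph{Full faithfulness.} I would exhibit $\Phi$ as the composite of $p_1^*\colon\prf X_1\to\prf P_1,$ then $\bR e_{1*}\bigl(\cO_{E_1}(E_1)\otimes h_1^*(-)\bigr)\colon\prf P_1\to\prf V',$ then $\bL\pi^*\colon\prf V'\to\prf V.$ The first and third are fully faithful by Proposition~\ref{functors}(1) and (2), and the second by Proposition~\ref{functors}(3) with the line bundle $\K=\cO_{E_1}(E_1)$ on $E_1;$ hence $\Phi$ is fully faithful. Likewise I would write $\Psi$ (up to the shift $[1]$) as the composite of $p_2^*(-)\otimes\R\colon\prf X_2\to\prf P_2,$ then $\bL\tau^*\colon\prf P_2\to\prf\wt{P}_2,$ then $\bR e_{2*}h_2^*\colon\prf\wt{P}_2\to\prf V.$ Here the first functor is fully faithful by Proposition~\ref{functors}(1) (twisting by the line bundle $\R$ is an autoequivalence), the second by Proposition~\ref{functors}(2), and $\bR e_{2*}h_2^*=\bR e_{2*}\bigl(\cO_{E_2}\otimes h_2^*(-)\bigr)$ by Proposition~\ref{functors}(3) with $\K=\cO_{E_2};$ the grouping of $\bR e_{2*}$ with $h_2^*$ is essential, since neither factor is fully faithful separately.

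\emph{Semi-orthogonality.} I would show $\Hom_V\bigl(\Psi(B),\Phi(A)[m]\bigr)=0$ for all $A\in\prf X_1,$ $B\in\prf X_2,$ $m\in\ZZ.$ Write $\Psi(B)=\bR e_{2*}(\mathcal{G})[1]$ with $\mathcal{G}=h_2^*\bL\tau^*(p_2^*B\otimes\R)$ and $\Phi(A)=\bL\pi^*(\mathcal{F}),$ $\mathcal{F}=\bR e_{1*}\bigl(\cO_{E_1}(E_1)\otimes h_1^*p_1^*A\bigr).$ First, the adjunction $(\bR e_{2*},e_2^{\flat})$ and the formula $e_2^{\flat}\cong\bL e_2^*\bigl(\cO(E_2)\otimes(-)\bigr)[-1]$ of Proposition~\ref{functors}(4) move the $\bR\Hom$ onto $E_2.$ Second, since $\pi\circ e_2=\wt{j}_2\circ h_2$ by diagram~(\ref{diagram}), one has $\bL e_2^*\bL\pi^*\mathcal{F}=\bL h_2^*\bL\wt{j}_2^*\mathcal{F},$ which is pulled back from $\wt{P}_2$ along $h_2;$ as $\mathcal{G}$ is visibly of the form $\bL h_2^*(-)$ as well, the adjunction $(\bL h_2^*,\bR h_{2*})$ and the projection formula collapse the computation to
\[
\bR\Hom_{\wt{P}_2}\Bigl(\bL\tau^*(p_2^*B\otimes\R)[1],\ \bigl(\bR h_{2*}\cO_{E_2}(E_2)\bigr)\otimes\bL\wt{j}_2^*\mathcal{F}\,[-1]\Bigr).
\]
Third, $\cO_{E_2}(E_2)=\cO_V(E_2)|_{E_2}$ is the relative $\cO(-1)$ of the projective bundle $h_2\colon E_2\to\wt{P}_2,$ whose fibres are $\PP^{\,c-1}$ with $c=\codim(\wt{P}_2,V')\ge2$ by construction; hence $\bR h_{2*}\cO_{E_2}(E_2)=0$ and the displayed $\bR\Hom$ vanishes. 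This gives $\Hom_V(\Psi(B),\Phi(A)[m])=0,$ that is, $\Phi(\prf X_1)\subset\Psi(\prf X_2)^{\perp}.$

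The step I expect to be the most delicate is the second one: one must check directly from diagram~(\ref{diagram}) both that $\pi\circ e_2=\wt{j}_2\circ h_2$ (so that the pullback of $\mathcal{F}$ to $E_2$ really factors through $\wt{P}_2$) and that the twist produced by $e_2^{\flat}$ is exactly the line bundle $\cO_{E_2}(E_2)$ annihilated by $\bR h_{2*}.$ Once these two points are in place the argument is formal; note in particular that for this lemma one needs neither base-change compatibilities through the rest of diagram~(\ref{diagram}) nor the vanishing $\bR f_*\L^{-1}\cong0$ and the identities~(\ref{line_bundles}) of Proposition~\ref{line}, which will enter only afterwards, when one computes $\bR\Hom_V(\Phi(A),\Psi(B))$ and identifies it with the gluing bimodule $\E^{\cdot}.$
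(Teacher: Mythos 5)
Your proposal is correct and follows essentially the same route as the paper. For full faithfulness you appeal, exactly as the paper does, to items 1)--3) of Proposition~\ref{functors} applied to the obvious factorizations of $\Phi$ and $\Psi$ (with $\K=\cO_{E_1}(E_1)$ and $\K=\cO_{E_2}$, respectively). For semi-orthogonality, your chain of reductions -- adjunction $(\bR e_{2*},e_2^{\flat})$ followed by the formula for $e_2^{\flat}$ from Proposition~\ref{functors}(4), then the identity $\pi\circ e_2=\wt j_2\circ h_2$, then adjunction along $h_2$ together with the projection formula, and finally $\bR h_{2*}\cO_{E_2}(E_2)=\bR h_{2*}\cO_{E_2}(-1)=0$ because $\codim(\wt P_2,V')\ge2$ -- is precisely the paper's chain of isomorphisms; the only cosmetic difference is that the paper phrases the computation for arbitrary $A\in\prf V'$, $B\in\prf\wt P_2$ and deduces the statement for the images of $\Phi,\Psi$ a posteriori, while you substitute $\mathcal F$ and $\mathcal G$ directly.
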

\begin{dok} The functors $\Phi$ and $\Psi$ are fully faithful as  compositions
of fully faithful functors. It follows from 1)-3) of Proposition \ref{functors}.

Semi-orthogonality is a consequence of the fact that $\bL\pi^*(\prf V')$ is in
the right orthogonal $\bR e_{2*} h_2^*(\prf \wt{P}_2)^{\perp}.$
The last statement follows from the chain of isomorphisms
\begin{multline*}
\Hom(\bR e_{2*} h_2^* B,\; \bL\pi^* A)\cong
\Hom(h_2^* B,\; e_2^{\flat}\bL\pi^* A)\cong
\Hom(h_2^* B,\; \bL e_2^*\bL\pi^* A\otimes \cO_{E_2}(E_2))\cong\\
\Hom(h_2^* B,\; h_2^*\bL \wt{j}_2^* A\otimes \cO_{E_2}(E_2))\cong
\Hom(B,\; \bL \wt{j}_2^* A\otimes \bR h_{2*}\cO_{E_2}(E_2))=
0
\end{multline*}
where $A\in\prf V', B\in\prf \wt{P}_2.$
The last equality holds because $\cO_{E_2}(E_2)\cong\cO_{E_2}(-1)$ under consideration of $E_2$ as the projectivization of the normal bundle
of $\wt{P}_2$ in $V',$ i.e. we have $\bR h_{2*} \cO_{E_2}(E_2)=0.$
\end{dok}

\begin{proposition}\label{main_prop} Let $X_1$ and $X_2$ be smooth projective schemes and let $\E^{\cdot}$ be a perfect complex
on the product $X_1\times X_2.$ Let $V$ be the smooth projective scheme constructed above and let $\Phi, \Psi$ be exact functors
defined by formula (\ref{funct}).
Let $\F^{\cdot}$ and $\G^{\cdot}$ be perfect complexes on $X_1$ and $X_2,$ respectively.
Then there is an isomorphism
\[
\Hom_{V}(\Phi(\F^{\cdot}),\; \Psi(\G^{\cdot}))\cong \Hom_{X_1\times X_2}(\pr_1^* \F^{\cdot}, \; \pr_2^*\G^{\cdot}\otimes \E^{\cdot}),
\]
where $\pr_i$ denote the projections of $X_1\times X_2$ on $X_i.$
\end{proposition}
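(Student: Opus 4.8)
The strategy is to compute $\Hom_V(\Phi(\F^{\cdot}),\Psi(\G^{\cdot}))$ by transporting the two functors against each other along the morphisms of diagram~(\ref{diagram}), stripping off one morphism at a time by adjunction, and then to compare the outcome with a direct rewriting of the right-hand side. First I would rewrite the right-hand side: since $\E^{\cdot}\cong\bR f_*\L$ by Proposition~\ref{line}, with $\pr_1 f=q_1$ and $\pr_2 f=q_2$, the projection formula gives $\pr_2^*\G^{\cdot}\otimes\bR f_*\L\cong\bR f_*(q_2^*\G^{\cdot}\otimes\L)$, and the adjunction $(f^*,\bR f_*)$ together with flatness of $f$ yields
\[
\Hom_{X_1\times X_2}(\pr_1^*\F^{\cdot},\; \pr_2^*\G^{\cdot}\otimes\E^{\cdot})\;\cong\;\Hom_Z(q_1^*\F^{\cdot},\; q_2^*\G^{\cdot}\otimes\L).
\]
So it is enough to produce an isomorphism $\Hom_V(\Phi(\F^{\cdot}),\Psi(\G^{\cdot}))\cong\Hom_Z(q_1^*\F^{\cdot},\, q_2^*\G^{\cdot}\otimes\L)$.

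Set $G_1:=\cO_{E_1}(E_1)\otimes h_1^* p_1^*\F^{\cdot}$ on $E_1$ and $B:=\bL\tau^*(p_2^*\G^{\cdot}\otimes\R)[1]$ on $\wt{P}_2$, so that $\Phi(\F^{\cdot})=\bL\pi^*\bR e_{1*}G_1$ and $\Psi(\G^{\cdot})=\bR e_{2*}(h_2^*B)$ by~(\ref{funct}). Applying $(\bL e_2^*,\bR e_{2*})$ and the identity $\pi e_2=\wt{j}_2 h_2$ gives $\Hom_V(\Phi(\F^{\cdot}),\Psi(\G^{\cdot}))\cong\Hom_{E_2}(h_2^*\bL\wt{j}_2^*\bR e_{1*}G_1,\; h_2^*B)$; since $h_2$ is a projectivization (Proposition~\ref{functors}, 1) the functor $h_2^*$ is fully faithful and $\bR h_{2*}\cO_{E_2}\cong\cO_{\wt{P}_2}$, so this equals $\Hom_{\wt{P}_2}(\bL\wt{j}_2^*\bR e_{1*}G_1,\; B)$. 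Next I would use base change along the cartesian square with vertices $D,E_1,\wt{P}_2,V'$: because $E_1$ is a Cartier divisor in $V'$ and $\wt{P}_2$ is integral and not contained in $E_1$, this square is Tor-independent, whence $\bL\wt{j}_2^*\bR e_{1*}\cong\bR d_{2*}\bL d_1^*$; and using $h_1 d_1=i_1 g$, $p_1 i_1=q_1$ one gets $\bL d_1^* G_1\cong\cO_D(E_1)\otimes g^* q_1^*\F^{\cdot}$ with $\cO_D(E_1):=d_1^*\cO_{E_1}(E_1)$.

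Now applying the adjunction $(\bR d_{2*},d_2^{\flat})$ with $d_2^{\flat}\cong\bL d_2^*(\cO(D)\otimes(-))[-1]$ (Proposition~\ref{functors}, 4) reduces everything to $D$:
\[
\Hom_{\wt{P}_2}(\bR d_{2*}\bL d_1^* G_1,\; B)\;\cong\;\Hom_D\bigl(\cO_D(E_1)\otimes g^* q_1^*\F^{\cdot},\; \bL d_2^* B\otimes\cO_D(D)[-1]\bigr).
\]
From $\tau d_2=i_2 g$, $p_2 i_2=q_2$ and the identification $i_2^*\R\cong\L$ of~(\ref{line_bundles}) one computes $\bL d_2^* B\cong g^*(q_2^*\G^{\cdot}\otimes\L)[1]$, so the target becomes $\Hom_D(\cO_D(E_1)\otimes g^* q_1^*\F^{\cdot},\; g^*(q_2^*\G^{\cdot}\otimes\L)\otimes\cO_D(D))$. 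The Ferrand pushout $T=P_1\sqcup_Z P_2$ and its embedding $j\colon T\hookrightarrow\PP^N$ make the ideal of $P_1$ restrict on $P_2$ to the ideal of $Z$, so $\wt{j}_2^*\cO_{V'}(E_1)\cong\cO_{\wt{P}_2}(D)$ and hence $\cO_D(E_1)\cong\cO_D(D)$; cancelling this line bundle leaves $\Hom_D(g^* q_1^*\F^{\cdot},\; g^*(q_2^*\G^{\cdot}\otimes\L))$. Finally $g$ is a projectivization, so $\bR g_*\cO_D\cong\cO_Z$, and the adjunction $(\bL g^*,\bR g_*)$ with the projection formula gives $\Hom_Z(q_1^*\F^{\cdot},\; q_2^*\G^{\cdot}\otimes\L)$, as required.

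The bulk of the argument is routine bookkeeping with pullbacks, shifts, and normal line bundles along the chain of morphisms in~(\ref{diagram}). I expect the two genuinely geometric inputs to be the main obstacles: the Tor-independence of the square $D=E_1\times_{V'}\wt{P}_2$, which licences the base-change isomorphism, and the identification $d_1^*\cO_{E_1}(E_1)\cong d_2^*\cO_{\wt{P}_2}(D)$, which comes down to the scheme-theoretic equality $j_1(P_1)\cap j_2(P_2)=Z$ built into the pushout construction of $T$ (and hence of $V$).
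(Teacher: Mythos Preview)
Your argument is correct and follows essentially the same adjunction-and-base-change strategy along diagram~(\ref{diagram}) as the paper; the only difference is that the paper applies $(\bR e_{1*},e_1^{\flat})$ on $V'$ rather than $(\bR d_{2*},d_2^{\flat})$ on $\wt{P}_2$, so the twist $\cO_{E_1}(E_1)$ built into $\Phi$ cancels directly against the one appearing in $e_1^{\flat}$ and no comparison of line bundles on $D$ is needed. Your identification $\cO_D(E_1)\cong\cO_D(D)$ is correct but more simply justified by noting that both are the restriction of $\cO_{V'}(E_1)$ to $D$ along the two sides of the Tor-independent cartesian square, rather than by invoking the pushout $T$.
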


\begin{dok}
Firstly, consider objects $A\in\prf V'$ and $B\in\prf \wt{P}_2.$ There is a sequence of isomorphisms
\begin{multline}\label{1_sequence}
\Hom_V(\bL\pi^* A, \; \bR e_{2*} h_2^* B)
\cong
\Hom_{V'}(A, \; \bR\pi_*\bR e_{2*} h_2^* B)
\cong\\
\Hom_{V'}(A, \; \bR\wt{j}_{2*}\bR h_{2*} h_2^* B)
\cong
\Hom_{V'}(A, \; \bR\wt{j}_{2*} B).
\end{multline}

Secondly, take $A\in\prf P_1$ and $B\in\prf P_2.$ Consider the commutative square
\begin{equation}\label{square}
\xymatrix{
D \ar@{^{(}->}[r]^{d_2} \ar[d]_{d_1} & \wt{P}_2 \ar[d]^{\wt{j}_2}\\
E_1 \ar@{^{(}->}[r]^{e_1} & V'
}
\end{equation}
that is a part of our main diagram (\ref{diagram}). The commutative square (\ref{square}) is cartesian.
Moreover, it is Tor-independent. This means that $\Tor^p_{\cO_{V'}}(\cO_{E_1}, \cO_{\wt{P}_2})=0$ for all $p>0.$
Therefore, by \cite[IV 3.1]{SGA6} or \cite[2.5.6]{TT} there is a canonical base change isomorphism of functors
\[
\bL e_1^{*}\bR \wt{j}_2\stackrel{\sim}{\lto}\bR d_{1*}\bL d_2^*.
\]

Using this isomorphism of functors we obtain the following sequence of isomorphisms
\begin{equation}
\begin{split}
&\Hom_{V'}(\bR e_{1*} (\cO_{E_1}(E_1)\otimes h_1^* A), \; \bR \wt{j}_{2*} \bL\tau^*  B)
\cong
\Hom_{E_1}(\cO_{E_1}(E_1)\otimes h_1^* A, \; e_1^{\flat}\bR \wt{j}_{2*} \bL\tau^*  B)
\cong\\
\label{2_sequence}
&\Hom_{E_1}(h_1^* A, \; \bL e_1^{*}\bR \wt{j}_{2*} \bL\tau^*  B[-1])
\cong
\Hom_{E_1}(h_1^* A, \; \bR d_{1*}\bL d_2^* \bL\tau^*  B[-1])
\cong\\
&
\Hom_{D}(\bL d_1^* h_1^* A, \; \bL d_2^* \bL\tau^*  B[-1])
\cong
\Hom_{D}(g^* \bL i_1^* A, \; g^* \bL i_2^*  B[-1])
\cong
\Hom_Z(\bL i_1^* A, \; \bL i_2^*  B[-1])
\end{split}
\end{equation}

Now combining (\ref{1_sequence}) and (\ref{2_sequence}), we obtain
\begin{equation}\label{3_sequence}
\begin{split}
&\Hom_{V}(\Phi(\F^{\cdot}),\; \Psi(\G^{\cdot}))
\cong
\Hom_{V'}(\bR e_{1*} (\cO_{E_1}(E_1)\otimes h_1^* p_1^* \F^{\cdot}), \; \bR \wt{j}_{2*} \bL\tau^* (p_2^* \G^{\cdot}\otimes \R)[1])
\cong\\
&\Hom_{Z}(\bL i_1^* p_1^* \F^{\cdot}, \; \bL i_2^* (p_2^* \G^{\cdot}\otimes \R) )
\cong
\Hom_{Z}(q_1^* \F^{\cdot}, \; q_2^* \G^{\cdot}\otimes \L)).
\end{split}
\end{equation}

The last isomorphism is a consequence of the construction of $P_2$ and the line bundle $\R$ on $P_2.$
By (\ref{line_bundles}) the restriction of $\R$ on $Z$ coincides with the line bundle $\L.$

Finally, we have $q_i=\pr_i\cdot f$ for $i=1,2$ and we know that $\bR f_*\L\cong\E^{\cdot}$ by construction
from Proposition \ref{line}. This implies
\begin{multline}\label{4_sequence}
\Hom_{Z}(q_1^* \F^{\cdot},\; q_2^* \G^{\cdot}\otimes \L))\cong
\Hom_{X_1\times X_2}(\pr_1^* \F^{\cdot},\; \pr_2^* \G^{\cdot}\otimes \bR f_*\L))
\cong\\
\Hom_{X_1\times X_2}(\pr_1^* \F^{\cdot},\; \pr_2^* \G^{\cdot}\otimes \E^{\cdot})).
\end{multline}
The isomorphisms (\ref{3_sequence}) and (\ref{4_sequence}) finish the proof of the proposition.
\end{dok}

\bigskip
\noindent{\bf Proof of Theorem \ref{main}.}\;
Let us consider the DG functors
\[
\mPhi: \prfdg X_1\lto \prfdg V,\quad \text{and}\quad \mPsi: \prfdg X_2\lto \prfdg V.
\]
They induce a bimodule
$\mS$ determined by the following rule
\begin{equation}\label{s_bimodule}
\mS(B, A)\cong \dHom_{\prfdg V}(\mPhi A,\; \mPsi B),
\quad\text{where}
\quad
A\in \prfdg X_1,\; B\in\prfdg X_2.
\end{equation}

On the other hand, the calculations from Proposition \ref{main_prop} gives us that the bimodule $\mS$ is quasi-isomorphic
to a bimodules $\mS_{\E^{\cdot}}$ given by the rule
\[
\mS_{\E^{\cdot}}(B, A)\cong \dHom_{\prfdg(X\times Y)}(\mpr_1^* A,\; \mpr_2^* B\otimes \E^{\cdot}),
\quad\text{where}
\quad
A\in \prfdg X_1,\; B\in\prfdg X_2.
\]

Take the pretriangulated DG subcategory $\dC\subset\prfdg V$ that is generated by the DG subcategories
$\mPhi(\prfdg X_1)$ and $\mPsi(\prfdg X_2).$ Lemma \ref{lemma_orthogonal} implies that there is a semi-orthogonal decomposition
\[
\Ho(\dC)\cong \langle \Phi(\prf X_1),\; \Psi(\prf X_2)\rangle.
\]
Since $\Phi$ and $\Psi$ are fully faithful,  Propositions \ref{gluing_semi-orthogonal} and \ref{gluing_quasifunctors}
give us that there are quasi-equivalences
\[
\dC\cong \mPhi(\prfdg X_1)\underset{\mS}{\oright}\mPsi(\prfdg X_2)
\cong
\prfdg X_1 \underset{\E^{\cdot}}{\oright}\prfdg X_2,
\]
where $\mS$ is the bimodule given by rule (\ref{s_bimodule}). By Theorem \ref{smooth_glue} the full DG subcategory $\dC\subset\prfdg V$
is smooth and proper as a gluing of smooth and proper DG categories via the perfect bimodule $\mS.$
Hence $\Ho(\dC)\cong \Ho(\prfdg X_1 \underset{\E^{\cdot}}{\oright}\prfdg X_2)$ is admissible in $\prf V.$
\hfill$\Box$

\begin{remark}
{\rm
It is useful to take in account that the category $\prf V$ from  Theorem \ref{main} has a semi-orthogonal decomposition of the form
$
\prf V=\langle \T_1,\dots \T_k\rangle
$
 such that each $\T_i$ is equivalent to one of the four categories, namely
$\prf\kk,\; \prf X_1,\; \prf X_2,$ and  $\prf (X_1\times X_2).$
It follows from the construction of $V$ as a two-step blowup of a projective space $\PP^N$ along
$P_1$ and $\wt{P}_2.$ By definition, $P_1$ and $P_2$ are projective bundles over $X_1$ and $X_2$ respectively and
$\wt{P}_2$ is the blowup of $P_2$ along $Z,$ where $Z$ is a sequence of projective bundles over $X_1\times X_2.$
}
\end{remark}

\subsection{Gluing of geometric noncommutative schemes}
In this section we extend results from the previous section to the case of geometric noncommutative schemes.
Actually all these statements are direct consequences of corresponding assertions for smooth and projective schemes.

Let $X_i,\; i=1,\dots,n$ be smooth and projective schemes. Let $\dN_i,\; i=1,\dots, n$ be  small pretriangulated DG categories.
Denote by $\N_i=\Ho(\dN_i)$ the homotopy triangulated categories.
Suppose that for all $i$ there are quasi-functors $\mF_i: \dN_i\to\prfdg X_i$
such that the induced exact functors $F_i: \N_i\to \prf X_i$ are fully faithful and have right and left adjoint functors.
 This means that $\N_i$ are admissible subcategories in $\prf X_i$ with respect to the full embeddings given by $F_i.$
This conditions imply that $\dN_i$ are geometric noncommutative schemes and, moreover, the DG categories $\dN_i$
are smooth and proper by Theorem \ref{smooth_glue}.

\begin{theorem}\label{main2}
Let DG categories $\dN_i,\; i=1,\dots, n$ and smooth projective schemes $X_i,\; i=1,\dots, n$ be as above.
Let $\dC$ be a proper pretriangulated DG category with full embeddings
 of DG categories $\dN_i\subset \dC$ such that $\C=\Ho(\dC)$ has a semi-orthogonal decomposition of the form
$
\C\cong\langle\N_1, \N_2,\dots, \N_n\rangle,
$
where $\N_i=\Ho(\dN_i).$ Then there are a smooth and projective scheme $V$ and  a quasi-functor $\mF: \dC\to\prfdg V$ such that
the induced functor $F: \C\to \prf V$ is fully faithful and has right and left adjoint functors, i.e. $\dC$ is
a geometric noncommutative scheme.
\end{theorem}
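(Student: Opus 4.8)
The plan is to reduce Theorem \ref{main2} to the two‑summand case and then to Theorem \ref{main}. First I would induct on $n$; the case $n=1$ is immediate with $V=X_1$. For the inductive step, write $\C=\langle\C',\N_n\rangle$ with $\C'=\langle\N_1,\dots,\N_{n-1}\rangle$ enhanced by the full DG subcategory $\dC'\subset\dC$. By induction there are a smooth projective scheme $V'$ and a quasi-functor $\mF'\colon\dC'\to\prfdg V'$ realizing $\C'$ as an admissible subcategory of $\prf V'$; together with the given embedding $\dN_n\hookrightarrow\prfdg X_n$ this is an instance of the $n=2$ statement for the pair $(V',X_n)$. (Passing to connected components one may also assume all the schemes involved are irreducible, as Theorem \ref{main} requires; the $V'$ produced by that theorem is automatically irreducible, being a two-step blowup of a projective space.) So it suffices to treat $n=2$.

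Assume now $\C=\langle\N_1,\N_2\rangle$. By Proposition \ref{gluing_semi-orthogonal}, $\dC$ is quasi-equivalent to the gluing $\dN_1\underset{\mS}{\oright}\dN_2$ with $\mS$ the $\dN_2\hy\dN_1$-bimodule $\mS(B,A)=\dHom_{\dC}(A,B)$. Since each $\dN_i$ is admissible in $\prfdg X_i$ it is smooth and proper (Theorem \ref{smooth_glue} and Proposition \ref{reg_prop}), hence so is $\dN_2^{\op}\otimes\dN_1$; and $\mS$ has finite-dimensional total cohomology because $\dC$ is proper, so Theorem \ref{dg_saturated} shows that $\mS$ is a perfect $\dN_2^{\op}\otimes\dN_1$-module.

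Next I would transport $\mS$ to $X_1\times X_2$. Realizing the $\mF_i$ as roofs of DG functors, the induced quasi-fully-faithful functor $\mF_2^{\op}\otimes\mF_1\colon\dN_2^{\op}\otimes\dN_1\to\prfdg X_2^{\op}\otimes\prfdg X_1$ carries an induction functor which sends the perfect module $\mS$ to a perfect $\prfdg X_2^{\op}\otimes\prfdg X_1$-module $\mS'$; by Theorem \ref{toen}, $\mS'$ corresponds to a perfect complex $\E^{\cdot}$ on $X_1\times_{\kk}X_2$, and, the induction functor being fully faithful on derived categories (Proposition \ref{Keller2}), the adjunction unit is a quasi-isomorphism $\phi\colon\mS\stackrel{\sim}{\lto}\bR(\mF_2\otimes\mF_1)_{*}\mS'$. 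By construction the gluing $\prfdg X_1\underset{\E^{\cdot}}{\oright}\prfdg X_2$ is the gluing via $\mS'$; Theorem \ref{main} embeds it as a full DG subcategory of $\prfdg V$ for a suitable smooth projective $V$, and Proposition \ref{gluing_quasifunctors} applied to $\mF_1$, $\mF_2$ and $\phi$ produces a quasi-functor
\[
\dC\simeq\dN_1\underset{\mS}{\oright}\dN_2\lto\prfdg X_1\underset{\E^{\cdot}}{\oright}\prfdg X_2\hookrightarrow\prfdg V
\]
whose homotopy functor $F$ is fully faithful, because $F_1$, $F_2$ are fully faithful and $\phi$ is a quasi-isomorphism.

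Finally I would check that $F(\C)$ is admissible in $\prf V$. The category $\prf V$ is proper (Proposition \ref{proper}); each $\N_i$ is regular, being smooth (Proposition \ref{smooth_regular}), and idempotent complete, being admissible in the idempotent complete category $\prf X_i$; hence $\C$, a semi-orthogonal gluing of the $\N_i$, is regular by Proposition \ref{reg_prop}(3) and idempotent complete (a semi-orthogonal gluing of idempotent complete triangulated categories is again idempotent complete). Proposition \ref{admissible} then gives that $F(\C)$ is admissible in $\prf V$, so $\dC$ is a geometric noncommutative scheme. The step I expect to be the real work is the third one: producing a perfect complex on $X_1\times X_2$ whose gluing bimodule restricts, along $\dN_i\hookrightarrow\prfdg X_i$, to the bimodule $\mS$ of $\dC$, and matching this with the hypotheses of Proposition \ref{gluing_quasifunctors}; the remaining steps are formal, given Theorem \ref{main} and the gluing results recalled earlier.
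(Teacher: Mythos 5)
Your proof is correct and follows essentially the same route as the paper: reduce to $n=2$ by induction, identify $\dC$ as a gluing via a perfect bimodule $\mS$ using Proposition \ref{gluing_semi-orthogonal} and Theorem \ref{dg_saturated}, transport $\mS$ to a perfect complex on $X_1\times X_2$ via Theorem \ref{toen} and Proposition \ref{geometric}/\ref{Keller2}, apply Theorem \ref{main} and Proposition \ref{gluing_quasifunctors}, and conclude admissibility from Proposition \ref{admissible}. The only differences are cosmetic: you peel off $\N_n$ from the right in the induction while the paper peels off $\N_1$ from the left, and you explicitly flag the irreducibility hypothesis in Theorem \ref{main} (a small point the paper leaves implicit).
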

\begin{proof}
The case $n=1$ is evident.
Consider the main case $n=2.$
By Proposition \ref{gluing_semi-orthogonal} the DG category $\dC$ is quasi-equivalent
to a gluing of $\dN_1$ and $\dN_2$ via
a $\dN_2\hy\dN_1$\!--bimodule $\mS$ that is defined by the rule
\[
\mS (B, A)=\dHom_{\dC}(A, B), \quad \text{with}\quad A\in\dN_1 \;\text{and}\; B\in\dN_2.
\]
Since $\dC$ is proper the bimodule $\mS$ is a DG functor from $\dN_2\otimes\dN_1^{\op}$ to $\prfdg\kk$
and by Theorem \ref{dg_saturated} it is perfect, because
$\dN_i$ are smooth and proper.
By Proposition \ref{gluing_semi-orthogonal} the DG category $\dC$ is quasi-equivalent to the gluing
$\dN_1\underset{\mS}{\oright}\dN_2.$ By Theorem \ref{smooth_glue} we obtain that $\dC$ is smooth.

Consider quasi-functors $\mF_i: \dN_i\to \prfdg X_i.$ We know that $\mF_i$ establish quasi-equivalences with
enhancements of admissible subcategories in $\prf X_i.$
By Theorem \ref{toen}
the DG category of perfect
DG modules over $\prfdg X_2^{\op}\otimes_{\kk}\prfdg X_1$ is equivalent to $\prfdg(X_1\times X_2).$
Thus the quasi-functors $\mF_i$ induce the extension and induction quasi-functors
\[
(\mF_1\otimes\mF_2)^*:\prfdg (\dN_2^{\op}\otimes\dN_1)\to \prfdg(X_1\times X_2),
\quad
(\mF_1\otimes\mF_2)_*:\prfdg(X_1\times X_2)\to \prfdg (\dN_2^{\op}\otimes\dN_1)
\]
and by  Proposition \ref{geometric} the extension functor induces a fully faithful functor
between homotopy categories.
This implies that the bimodule $\mS$ is quasi-isomorphic to
a bimodule of the form $(\mF_1\otimes\mF_2)_* \E^{\cdot}$ for some perfect complex $\E^{\cdot}$ on
$X_1\times X_2.$

By Proposition \ref{gluing_quasifunctors} there is a quasi-functor
\[
\mF_1\underset{\phi}{\oright}\mF_2: \dN_1\underset{\mS}{\oright}\dN_2 \lto \prfdg X_1\underset{\E^{\cdot}}{\oright}\prfdg X_2,
\]
where $\phi$ is a quasi-isomorphism between $\mS$ and $(\mF_1\otimes\mF_2)_* \E^{\cdot}.$
Since  the functors $F_i: \N_i\to \prf X_i$ are fully faithful, the induced functor
\[
F_1\underset{\phi}{\oright}F_2: \Ho(\dN_1\underset{\mS}{\oright}\dN_2) \lto \Ho(\prfdg X_1\underset{\E^{\cdot}}{\oright}\prfdg X_2)
\]
is fully faithful too by Proposition \ref{gluing_quasifunctors}.

By Theorem \ref{main} the DG category $\prfdg X_1 \underset{\E^{\cdot}}{\oright}\prfdg X_2$ is quasi-equivalent to a full DG subcategory
of $\prfdg V$ for some smooth and projective scheme $V.$ Consider the composition quasi-functor
\[
\mF: \dC\stackrel{\sim}{\lto} \dN_1\underset{\mS}{\oright}\dN_2 \stackrel{\mF_1 \underset{\phi}{\oright}\mF_2}{\llto} \prfdg X_1\underset{\E^{\cdot}}{\oright}\prfdg X_2\lto \prfdg V.
\]
It induces an exact functor $\C\to\prf V$ that is fully faithful as a composition of fully faithful functors.
The DG category $\dC$ is proper and it is smooth as a gluing of smooth DG categories via a perfect DG bimodule.
Smoothness implies regularity of $\dC$ (see Proposition \ref{smooth_regular}).
Moreover, the category $\C$ is idempotent complete, because $\N_i$ are idempotent complete as admissible subcategories
 of $\prf X_i.$ Now, by Proposition \ref{admissible} regularity and properness of $\C$
give that the image of the fully faithful functor $F$ is an admissible
subcategory of $\prf V.$ Hence, $F$ admits  right and left adjoint functors.

The general case $n$ is done by induction. Denote by $\N'_2\subset \C$ the left orthogonal to $\N_1$ and denote by
$\dN'_2\subset \dC$ the full DG subcategory consisting of all objects from $\N'_2.$
We have semi-orthogonal decompositions $\N'_2=\langle \N_2,\dots N_n\rangle$ and $\C=\langle \N_1,\N_2\rangle.$
By the induction hypothesis, there are a smooth and projective scheme $V'$ and  a quasi-functor $\mF: \dN'_2\to\prfdg V'$ such that
the induced functor $F: \N'_2\to \prf V'$ is fully faithful and has right and left adjoint functors.
Now applying the proof for $n=2$ and the DG subcategories  $\dN_1$ and $\dN'_2$ in $\dC,$ we obtain the statement of the theorem for
$\dC.$
\end{proof}

\begin{corollary}\label{emb} Let $Y$ be a proper scheme over a field of characteristic $0.$ Then there are
a smooth projective scheme $V$ and a quasi-functor $\mF: \prfdg Y\to \prfdg V$ such that
the induced functor $F:\prf Y\to \prf V$ is fully faithful.
\end{corollary}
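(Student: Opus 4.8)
The plan is to deduce the statement from Theorem \ref{main2} together with the existence of a \emph{geometric categorical resolution} of $Y$, which is (the geometric form of) the main result of \cite{KL}. Over a field of characteristic zero one has resolution of singularities, and \cite{KL} use this to construct, for any separated scheme $Y$ of finite type, a smooth and proper pretriangulated DG category $\dC$ together with a quasi-functor $\mG\colon\prfdg Y\to\dC$ whose induced exact functor $\prf Y\to\Ho(\dC)$ is fully faithful; here $\mG$ is the restriction to perfect complexes of the pullback functor of the categorical resolution, and its full faithfulness is forced by the fact that the pushforward functor of the resolution is a left inverse to it on perfect complexes. The feature of the \cite{KL} construction that I would exploit is that, when $Y$ is proper, $\dC$ is obtained by a finite iterated gluing, in the sense of Definition \ref{gluing_cat}, of DG categories $\prfdg W_1,\dots,\prfdg W_n$ of perfect complexes on smooth and proper schemes $W_i$ --- namely a resolution $\widetilde Y$ of $Y$ together with resolutions of the successive images of exceptional loci, all of which are proper because $Y$ is.

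Granting this, the corollary follows quickly. Iterating Proposition \ref{dg_semiorhtogonal} we obtain a semi-orthogonal decomposition $\Ho(\dC)=\langle\prf W_1,\dots,\prf W_n\rangle$, realized by the full DG subcategories $\dN_i:=\prfdg W_i\subset\dC$. Since we are in characteristic zero, Remark \ref{Hironaka} lets us embed each smooth proper $W_i$ so that $\prf W_i$ becomes an admissible subcategory of $\prf X_i$ for some smooth and projective $X_i$ (via a pullback functor $\bL\pi_i^*$, whose image is admissible by Proposition \ref{admissible}). Thus $\dC$ satisfies the hypotheses of Theorem \ref{main2}: it is a proper pretriangulated DG category and $\Ho(\dC)=\langle\N_1,\dots,\N_n\rangle$ with $\N_i=\Ho(\dN_i)$ admissible in $\prf X_i$ for smooth projective $X_i$. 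Theorem \ref{main2} then yields a smooth and projective scheme $V$ and a quasi-functor $\mF'\colon\dC\to\prfdg V$ whose induced functor $\Ho(\dC)\to\prf V$ is fully faithful. Setting $\mF:=\mF'\circ\mG\colon\prfdg Y\to\prfdg V$ (a quasi-functor as a composition of quasi-functors), the induced functor $F\colon\prf Y\to\prf V$ is fully faithful, being a composition of fully faithful functors; this is exactly the assertion.

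The main obstacle is the first step: extracting from \cite{KL} precisely the form used above, namely that for a \emph{proper} $Y$ the categorical resolution can be realized, at the level of DG enhancements, as a finite gluing of $\prfdg W_i$ with all $W_i$ smooth and proper, together with a quasi-functor out of $\prfdg Y$ realizing the fully faithful embedding on homotopy categories. Everything after that is formal: the passage from the semi-orthogonal decomposition to a full embedding into the perfect complexes on a single smooth projective scheme is precisely Theorem \ref{main2}, and passing from the mere properness of the $W_i$ to genuine projectivity of the $X_i$ is needed because the proof of Theorem \ref{main2} invokes Theorem \ref{toen} and the two-step blowup construction of Theorem \ref{main}, both of which require projective schemes. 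Finally, one should note that characteristic zero enters only through \cite{KL} (and Remark \ref{Hironaka}); the gluing machinery underlying Theorems \ref{main} and \ref{main2} is characteristic-free.
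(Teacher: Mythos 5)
Your proposal is correct and follows essentially the same route as the paper: invoke the categorical resolution of \cite{KL} to get a fully faithful quasi-functor from $\prfdg Y$ into a finite gluing of categories $\prfdg W_i$ with $W_i$ smooth and proper, use Remark \ref{Hironaka} (characteristic $0$) to realize each $\prf W_i$ as an admissible subcategory of $\prf X_i$ with $X_i$ smooth projective, and then apply Theorem \ref{main2} and compose. Your extra care about how the semi-orthogonal decomposition and admissibility hypotheses of Theorem \ref{main2} are verified is a faithful expansion of the paper's terser argument, not a different proof.
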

\begin{dok}
It follows from the main theorem of \cite[Th.1.4]{KL} that $Y$ has a so-called categorical resolution.
By the construction of this categorical resolution there is a quasi-functor from $\mG: \prfdg Y\to\dD,$
where $\dD$ is a gluing of DG categories of perfect complexes on smooth proper schemes, and
the induced functor $G:\prf Y\to \D$ is fully faithful.
Now as in Remark \ref{Hironaka} over a field of characteristic $0$ for any smooth proper scheme
there is a sequence of blowups with smooth centers such that the resulting smooth scheme is projective.
Hence $\dD$ is a gluing of geometric noncommutative schemes. By Theorem \ref{main2} there are a smooth projective $V$
and a quasi-functor
from $\dD$ to $\prfdg V$ which is fully faithful on homotopy categories. The composition of these  quasi-functors
gives us a quasi-functor $\mF: \prfdg Y\to \prfdg V$ that is also fully faithful on homotopy categories.
\end{dok}

\section{Application to finite algebras and exceptional collections}\label{applications}

\subsection{Finite dimensional algebras}

Let $\Lambda$ be a finite dimensional algebra over a base field $\kk.$
Denote by $\rd$ the (Jacobson) radical of $\Lambda.$ We know that $\rd^n=0$ for some $n.$
Define the index of nilpotency $i(\Lambda)$ of $\Lambda$ as the smallest integer $n$
such that $\rd^n=0.$

Let $S$ be the quotient algebra $\Lambda/\rd.$ It is semisimple and has only a finite number of simple non-isomorphic modules.
Denote by $\Md\Lambda$ and $\md\Lambda$ the abelian categories of all right modules and finite right modules over $\Lambda,$ respectively.

The following amazing result was proved by M.~Auslander.
\begin{theorem}\cite{Au}
Let $\Lambda$ be a finite dimensional algebra of index $n.$ Then the finite dimensional algebra
$\Gamma=\End(\bigoplus_{p=1}^{n} \Lambda/\rd^p)$ has the following properties:
\begin{enumerate}
\item[1)] $\gldim\Gamma\le n+1;$
\item[2)] there is a finite projective $\Gamma$\!--module $P$ such that $\End_{\Gamma}(P)\cong \Lambda.$
\end{enumerate}
\end{theorem}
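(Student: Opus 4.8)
The plan is to prove the two assertions essentially independently; (2) is easy and (1) carries the weight. Write $M=\bigoplus_{p=1}^{n}\Lambda/\rd^{p}$, viewed as a right $\Lambda$-module, and $\Gamma=\End_\Lambda(M)$, so that $M$ becomes a $\Gamma\hy\Lambda$-bimodule and, by the standard equivalence between $\mathrm{add}(M)$ and the category of finitely generated projective $\Gamma$-modules given by $\Hom_\Lambda(M,-)$, the indecomposable projective $\Gamma$-modules are exactly the $\Hom_\Lambda(M,Z)$ with $Z$ an indecomposable direct summand of $M$. For (2): since $\rd^{n}=0$ we have $\Lambda=\Lambda/\rd^{n}$, so $\Lambda$ is a direct summand of $M$; letting $e\in\Gamma$ be the corresponding idempotent we get $eM\cong\Lambda$, hence $P:=e\Gamma$ is a finitely generated projective $\Gamma$-module with
\[
\End_\Gamma(P)\;\cong\;e\Gamma e\;=\;e\,\End_\Lambda(M)\,e\;\cong\;\End_\Lambda(eM)\;\cong\;\End_\Lambda(\Lambda)\;\cong\;\Lambda .
\]

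For (1) the first step will be the reduction
\[
\gldim\Gamma\;\le\;2+\sup_{N\in\md\Lambda}\operatorname{pd}_\Gamma\Hom_\Lambda(M,N).
\]
Indeed, an arbitrary finitely generated $\Gamma$-module $X$ has a projective presentation which, by the description of the projectives above, is of the form $\Hom_\Lambda(M,M_{1})\xrightarrow{\Hom_\Lambda(M,f)}\Hom_\Lambda(M,M_{0})\to X\to 0$ for some $f\colon M_{1}\to M_{0}$ in $\mathrm{add}(M)$; since $\Hom_\Lambda(M,-)$ is left exact, the corresponding second syzygy is $\Omega^{2}X\cong\ker\Hom_\Lambda(M,f)\cong\Hom_\Lambda(M,\ker f)$, whence $\operatorname{pd}_\Gamma X\le 2+\operatorname{pd}_\Gamma\Hom_\Lambda(M,\ker f)$. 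So it remains to show $\operatorname{pd}_\Gamma\Hom_\Lambda(M,N)\le n-1$ for every $N\in\md\Lambda$.

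I would prove this last bound — in the sharper form $\operatorname{pd}_\Gamma\Hom_\Lambda(M,N)\le\ell(N)-1$, where $\ell(N)$ is the Loewy length of $N$ (the least $p$ with $N\rd^{p}=0$, so $\ell(N)\le n$) — by induction on $\ell(N)$. If $\ell(N)=1$ then $N$ is semisimple, hence in $\mathrm{add}(\Lambda/\rd)\subseteq\mathrm{add}(M)$, so $\Hom_\Lambda(M,N)$ is projective. For the inductive step the engine is a short exact sequence
\[
0\longrightarrow N'\longrightarrow M^{(0)}\stackrel{\phi}{\longrightarrow}N\longrightarrow 0
\]
with $M^{(0)}\in\mathrm{add}(M)$ in which $\phi$ is a right $\mathrm{add}(M)$-approximation of $N$ — i.e. $\Hom_\Lambda(M,\phi)$ is surjective, and then $\phi$ is automatically surjective as a map of $\Lambda$-modules since $\Lambda\in\mathrm{add}(M)$ — and in which $\ell(N')\le\ell(N)-1$. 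Given such a sequence, left exactness of $\Hom_\Lambda(M,-)$ makes $0\to\Hom_\Lambda(M,N')\to\Hom_\Lambda(M,M^{(0)})\to\Hom_\Lambda(M,N)\to 0$ a short exact sequence of $\Gamma$-modules with projective middle term, so $\operatorname{pd}_\Gamma\Hom_\Lambda(M,N)\le 1+\operatorname{pd}_\Gamma\Hom_\Lambda(M,N')\le\ell(N)-1$ by induction. Combined with the reduction this yields $\gldim\Gamma\le 2+(n-1)=n+1$.

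The hard part will be producing the short exact sequence above, and this is precisely where it matters that $M$ contains $\Lambda/\rd^{p}$ for \emph{all} $p\le n$. One takes $\phi$ to be the minimal right $\mathrm{add}(M)$-approximation of $N$. Since $N$ is a module over $\Lambda/\rd^{\ell}$ with $\ell=\ell(N)$ and $\Lambda/\rd^{\ell}$ is a summand of $M$, $\phi$ is built from the projective cover $(\Lambda/\rd^{\ell})^{s}\twoheadrightarrow N$ of $N$ over the ring $\Lambda/\rd^{\ell}$ — whose kernel lies in $(\rd/\rd^{\ell})^{s}$ and is therefore killed by $\rd^{\ell-1}$ — together with further summands $\Lambda/\rd^{q}$, $q<\ell$, adjoined in order to factor those maps $\Lambda/\rd^{q}\to N$ that do not already factor through the cover; each such map lands in the submodule $\{x\in N:x\rd^{q}=0\}$, which has Loewy length $\le q<\ell$. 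The one substantive point — the obstacle I expect to spend effort on — is the bookkeeping showing that adjoining these lower summands still leaves $\ker\phi$ of Loewy length $\le\ell-1$. Everything else in the argument is formal, and with that point settled the theorem follows.
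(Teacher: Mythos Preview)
The paper does not prove this statement at all: it is quoted verbatim from Auslander and attributed via the citation \cite{Au}, then used as a black box in the proof of the next theorem. So there is no ``paper's own proof'' to compare against; your proposal is being measured against Auslander's original argument.

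Your proof of (2) is clean and correct. Your reduction for (1) --- that $\gldim\Gamma\le 2+\sup_N\operatorname{pd}_\Gamma\Hom_\Lambda(M,N)$ via the second-syzygy trick, followed by induction on the Loewy length $\ell(N)$ --- is exactly the standard strategy and is sound. The one place you yourself flag as unfinished, namely that the kernel of the minimal right $\mathrm{add}(M)$-approximation $\phi\colon M_0\to N$ has Loewy length at most $\ell(N)-1$, is indeed the entire content of the argument; everything else is formal. This gap is real, not just bookkeeping: the projective cover over $\Lambda/\rd^{\ell}$ alone is \emph{not} in general an $\mathrm{add}(M)$-approximation (maps from $\Lambda/\rd^{q}$ with $q<\ell$ need not lift), so one genuinely must control the extra summands.

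Here is how to close it. First observe, as you do, that minimality forces $M_0=\bigoplus_{p\le\ell}(\Lambda/\rd^{p})^{a_p}$, so $M_0\rd^{\ell-1}$ lives entirely in the top block $(\Lambda/\rd^{\ell})^{a_\ell}$. Now pass to indecomposable summands: write $\Lambda/\rd^{\ell}=\bigoplus_\alpha e_\alpha\Lambda/e_\alpha\rd^{\ell}$ with the $e_\alpha$ primitive idempotents; each $e_\alpha\Lambda/e_\alpha\rd^{\ell}$ is local with simple top. If $K\rd^{\ell-1}\ne0$, pick $k\in K$ and $r\in\rd^{\ell-1}$ with $kr\ne0$; then $ke_\alpha r\ne0$ for some $\alpha$, so $ke_\alpha\in M_0$ has nonzero projection to some indecomposable summand $e_\alpha\Lambda/e_\alpha\rd^{\ell}$ of $M_0$ lying outside its radical, hence generating it. The resulting map $e_\alpha\Lambda/e_\alpha\rd^{\ell}\to M_0$, $\overline{e_\alpha}\mapsto ke_\alpha$, is then a split monomorphism in $\mathrm{add}(M)$ whose image lies in $K=\ker\phi$ (since $ke_\alpha\in K$), contradicting right-minimality of $\phi$. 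With this lemma in hand your induction goes through and gives $\gldim\Gamma\le n+1$.
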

Let us consider the bounded derived category of finite $\Gamma$\!--modules $\D^b(\md\Gamma).$
Since $\Gamma$ has finite global dimension, $\D^b(\md\Gamma)$ is equivalent to the category
of perfect complexes $\prf\Gamma.$
Some variants of the following theorem are known (see. e.g. \cite{KL}).

\begin{theorem}\label{algebra_exceptional}
Let $\Lambda$ be a finite dimensional algebra of index $n$ and let
$\Gamma=\End(\bigoplus_{p=1}^{n} \Lambda/\rd^p).$
The derived category $\prf \Gamma\cong \D^b(\md\Gamma)$ has a semi-orthogonal decomposition of the form
\[
\prf\Gamma=\langle\N_1,\dots,\N_n\rangle
\]
such that each subcategory $\N_i$ is semisimple, i.e. $\N_i\cong \langle K_i\rangle,$
where $K_i$ is semi-exceptional and for all $i$ the algebras $\End_{\Gamma}(K_i)$ are  quotients of the semisimple algebra
$S=\Lambda/\rd.$
\end{theorem}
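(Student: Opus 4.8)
The plan is to analyze the structure of $\Gamma = \End(\bigoplus_{p=1}^n \Lambda/\rd^p)$ via the standard filtration of $\Lambda$ by powers of the radical. Write $M_p = \Lambda/\rd^p$ for $1\le p\le n$, so $M_1 = S$ and $M_n = \Lambda$, and set $M = \bigoplus_{p=1}^n M_p$. The idempotents of $\Gamma$ given by the projections $M \to M_p$ give an orthogonal decomposition of the identity, and hence a collection of projective right $\Gamma$-modules $P_p = \Hom(M, M_p)$ with $\bigoplus_p P_p = \Gamma$. First I would verify that the objects $K_p := P_p$, viewed in $\prf\Gamma \cong \D^b(\md\Gamma)$, form a semi-exceptional collection generating the whole category: generation is automatic since the $P_p$ are the indecomposable-block projectives summing to $\Gamma$; the semi-orthogonality $\Hom(K_i, K_j[l]) = 0$ for $i>j$ and all $l$, together with $\Hom(K_i,K_i[l]) = 0$ for $l\neq 0$, must be extracted from the radical filtration — the key point being that $\Hom_\Gamma(P_i, P_j) = e_j \Gamma e_i = \Hom_\Lambda(M_i, M_j)$, and the surjection $M_j \twoheadrightarrow M_i$ for $i \le j$ (resp. inclusion $\rd^i/\rd^j \hookrightarrow M_j$) controls these Hom-spaces so that the ``lower-triangular'' ones vanish after passing to the appropriate quotient.

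The crucial computation is that $\End_\Gamma(K_i)$ is a quotient of $S = \Lambda/\rd$. Here I would use $\End_\Gamma(P_i) = e_i \Gamma e_i = \End_\Lambda(M_i) = \End_\Lambda(\Lambda/\rd^i)$, and observe that any $\Lambda$-endomorphism of $\Lambda/\rd^i$ must send the submodule $\rd/\rd^i$ into itself (being the radical of the module), hence descends to an endomorphism of $\Lambda/\rd = S$; chasing this through shows $\End_\Lambda(\Lambda/\rd^i)$ maps onto its ``semisimplification'', which is a quotient of $S$, and — after further reducing modulo the radical of the endomorphism algebra, which is what one actually wants for a genuine semi-exceptional object — one lands in a semisimple quotient of $S$. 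Strictly, to get honest semi-exceptional objects one may need to replace each $P_i$ by the image under $\Hom(M,-)$ of a suitable direct summand, or equivalently pass from the $P_i$ to the simple-top-refined pieces; I would phrase the collection $(K_1,\dots,K_n)$ as the projective generators of the blocks of $\Gamma$ obtained from the filtration, and then note $\N_i = \langle K_i \rangle$.

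The semi-orthogonal decomposition $\prf\Gamma = \langle \N_1,\dots,\N_n\rangle$ then follows formally: the $K_i$ generate, they satisfy the required $\Hom$-vanishing for $i>j$, and each $\langle K_i\rangle$ is equivalent to $\prf$ of the semisimple algebra $\End_\Gamma(K_i)$, hence is itself a semisimple category. I would assemble this using the discussion of (semi-)exceptional collections and semi-orthogonal decompositions from Section~1, in particular the fact that a full semi-exceptional collection yields a semi-orthogonal decomposition with $\N_p = \langle E_p\rangle$.

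\textbf{Main obstacle.} The delicate point is the precise statement that $\End_\Gamma(K_i)$ is a \emph{quotient} of $S$, not merely Morita-related to one or a quotient of some other semisimple algebra. This requires carefully identifying which $\Lambda$-module endomorphisms of $\Lambda/\rd^p$ survive, and arguing that the natural map $\End_\Lambda(\Lambda/\rd^p) \to \End_\Lambda(\Lambda/\rd) = S^{\mathrm{op}}$-ish structure (compatibly with the bimodule conventions in Definition~\ref{semi-exc}) is surjective with the relevant semisimple target being a quotient of $S$. I expect the bookkeeping of left/right module conventions and of which idempotents of $S$ actually appear in each $K_i$ to be the fiddliest part; the homological vanishing statements, by contrast, should drop out cleanly from the fact that $\Gamma$ is, essentially by construction, a ``directed'' algebra along the radical filtration.
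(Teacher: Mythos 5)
Your choice $K_p:=P_p$ does not work, and the hedges you add (``passing to the appropriate quotient'', ``replace each $P_i$ by a suitable direct summand'') do not repair it. The projectives $P_i=\Hom_\Lambda(M,\Lambda/\rd^i)$ satisfy $\Hom_\Gamma(P_i,P_j)\cong\Hom_\Lambda(\Lambda/\rd^i,\Lambda/\rd^j)\cong\Lambda/\rd^j$ for $i\ge j$, which is \emph{nonzero} for $i>j$; so the collection $(P_1,\dots,P_n)$ is not semi-orthogonal in the required order, and moreover $\End_\Gamma(P_i)\cong\Lambda/\rd^i$ is not semisimple for $i\ge 2$. No passage to direct summands or to ``simple-top-refined pieces'' of the $P_i$ fixes this: the obstruction lives in the nonzero lower-triangular Hom's and in the nilpotent part of $\End_\Gamma(P_i)$, not in a block decomposition.

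The missing idea is a mapping-cone construction. The quotient maps $\Lambda/\rd^i\to\Lambda/\rd^{i-1}$ induce morphisms $\phi_{i,i-1}:P_i\to P_{i-1}$, and one sets $K_1=P_1$ and defines $K_i$ for $i\ge 2$ by the exact triangle $K_i\to P_i\xrightarrow{\phi_{i,i-1}}P_{i-1}\to K_i[1]$ in $\prf\Gamma$; the $K_i$ are complexes, not modules. Semi-orthogonality then follows from the vanishing $\Hom_\Gamma(K_i,P_j[l])=0$ for $i>j$ (which uses projectivity of the $P_j$ and the identification $\Hom_\Gamma(P_i,P_j)\cong\Lambda/\rd^j$), and a direct computation with the three-term Hom-complex shows $\Hom_\Gamma(K_i,K_i[l])=0$ for $l\ne 0$. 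Finally, an endomorphism of $K_i$ is a compatible pair $(a_i,a_{i-1})$ with $a_{i-1}$ determined by $a_i\in\Lambda/\rd^i$, and any $a_i\in\rd$ induces the zero endomorphism of $K_i$ because it factors through a morphism $P_{i-1}\to P_i$; this is what makes $\End_\Gamma(K_i)$ a quotient of $S=\Lambda/\rd$. Your remark that endomorphisms of $\Lambda/\rd^i$ ``descend to $S$'' only exhibits a surjection $\Lambda/\rd^i\twoheadrightarrow S$ and does not show that the endomorphism ring of any actual object in your collection is semisimple.
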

\begin{dok}
Denote by $M$ the $\Lambda$\!--module $\bigoplus_{p=1}^{n} \Lambda/\rd^p$
and by $M_s$ the $\Lambda$\!--modules $\Lambda/\rd^s,\; s=1,\dots,n.$
Consider the functor $\Hom_{\Lambda}(M, -)$ from the abelian category
$\md\Lambda$ to the abelian category $\md\Gamma.$
Denote by $P_s$ the $\Gamma$\!--modules $\Hom_{\Lambda}(M, M_s).$
They are projective $\Gamma$\!--modules and $\Gamma=\bigoplus_{s=1}^n P_s.$

By Gabriel-Popescu theorem, since $M$ is a generator for $\Md\Lambda$ the functor $\Hom_{\Lambda}(M, -)$ from
$\Md\Lambda$ to $\Md\Gamma$ is fully faithful. Thus, there are isomorphisms
\[
\Hom_{\Gamma}(P_i, P_j)\cong \Hom_{\Lambda}(M_i, M_j)=\Hom_{\Lambda}(\Lambda/\rd^i, \Lambda/\rd^j)
\quad\text{for all}
\quad
1\le i, j\le n.
\]
Moreover, we have $\Hom_{\Lambda}(\Lambda/\rd^i, \Lambda/\rd^j)\cong \Lambda/\rd^j$ when $i\ge j.$
The canonical quotient morphisms $\Lambda/\rd^i\to \Lambda/\rd^j,$ when $i\ge j,$ induce morphisms
$\phi_{i,j}: P_i\to P_j.$

Let us consider $\phi_{i, i-1}$ and the induced exact triangles
\begin{equation}\label{semi_exceptional}
\xymatrix{
K_i\ar[r] & P_i \ar[r]^{\phi_{i, i-1}} & P_{i-1}\ar[r] & K_i[1], & i=2,\dots n
}
\end{equation}
in $\prf \Gamma.$ These triangles define objects $K_i$ for $i=2,\dots, n.$
We also set $K_1=P_1.$

Now, since $P_i$ are projective and $\Hom_{\Gamma}(P_i, P_j)\cong \Lambda/\rd^{j}$ when $i\ge j,$ we have vanishing
\begin{equation}\label{vanish}
\Hom_{\Gamma}(K_i, P_j[l])=0,\quad\text{for all}\quad l \quad\text{when}\quad i>j.
\end{equation}
Using definition (\ref{semi_exceptional}) of $K_i$ we immediately obtain semi-orthogonality conditions
\[
\Hom_{\Gamma}(K_i, K_j[l])=0, \quad\text{for all}\quad l \quad\text{when}\quad i>j.
\]
Finally, we have to compute $\bR\Hom_{\Gamma}(K_i, K_i)$ for all $i.$ Exact triangles (\ref{semi_exceptional}) give us
that the vector spaces $\Hom_{\Gamma}(K_i, K_i[l])$ are cohomology of the complexes

\begin{equation}\label{cohomology}
\Hom_{\Gamma}(P_{i-1}, P_i)\lto \Hom_{\Gamma}(P_{i}, P_i)\bigoplus \Hom_{\Gamma}(P_{i-1}, P_{i-1})
\lto \Hom_{\Gamma}(P_{i}, P_{i-1})
\end{equation}
that coincide with the complexes
\[
\Hom(\Lambda/\rd^{i-1}, \Lambda/\rd^i)\lto \Hom(\Lambda/\rd^{i}, \Lambda/\rd^i)\bigoplus \Hom(\Lambda/\rd^{i-1}, \Lambda/\rd^{i-1})
\lto \Hom(\Lambda/\rd^{i}, \Lambda/\rd^{i-1}).
\]
The morphism $\Hom(\Lambda/\rd^{i-1}, \Lambda/\rd^{i-1})
\to \Hom(\Lambda/\rd^{i}, \Lambda/\rd^{i-1})$ is an isomorphism and the morphism
$\Hom(\Lambda/\rd^{i-1}, \Lambda/\rd^i)\lto \Hom(\Lambda/\rd^{i}, \Lambda/\rd^i)$ is an injection.
This implies that the complex (\ref{cohomology}) has only zero cohomology.
Therefore,
\[
\Hom_{\Gamma}(K_i, K_i[l])=0, \quad\text{for all}\quad l\ne 0 \quad \text{and all}\quad i=1,\dots, n.
\]
Denote by $S_i$ the algebra of endomorphisms $\End_{\Gamma}(K_i),$ where $i=1,\dots, n.$
We know that $S_1=\End_{\Gamma}{P_1}\cong\End_{\Lambda}(\Lambda/\rd)=S$ is semisimple.

Let $a\in S_i$ be an element. It can be presented by a pair of morphisms $(a_{i}, a_{i-1})$ included in commutative diagram
\[
\begin{CD}
P_i & @>\phi_{i, i-1}>> & P_{i-1}\\
@Va_iVV && @VV a_{i-1}V\\
P_i & @>\phi_{i, i-1}>> & P_{i-1}
\end{CD}
\]
The vanishing conditions (\ref{vanish}) implies that the morphism $a_{i-1}$ is uniquely determined by $a_i.$
Thus, the element $a_i\in\Hom_{\Gamma}(P_i, P_i)\cong\Lambda/\rd^i$
induces an endomorphism of $K_i$ and we see that there is a homomorphism of algebras
 $\Lambda/\rd^{i}\to \End_{\Gamma}(K_i)$ that is surjective.
If now $a_i\in\End(P_i)=\Lambda/\rd^i$ belongs to $\rd,$  then as an endomorphism
of $\Lambda/\rd^i$ it sends $\rd^{i-1}$ to zero.
This implies that it is induced by a morphism of $\Lambda/\rd^{i-1}$ to $\Lambda/\rd^{i}.$
Thus, we obtain that the pair of morphisms $(a_i, a_{i-1})$ is induced by a morphism from $P_{i-1}$ to $P_i$
if $a_i\in \rd.$
This means that the algebra of endomorphisms $\End_{\Gamma}(K_i)$ is a quotient of the semisimple algebra $S=\Lambda/\rd.$
Therefore, the algebras $S_i=\End_{\Gamma}(K_i)$ are semisimple for all $i=1,\dots, n$ too.
As $P_i,\;i=1,\dots,n$ generate the category $\prf\Gamma$ the objects $K_i,\;i=1,\dots,n$ generate
$\prf\Gamma$ as well, and we obtain a semi-orthogonal decomposition
\[
\prf\Gamma=\langle\langle K_1\rangle,\cdots,\langle K_n\rangle\rangle,
\]
where all $K_i$ are semi-exceptional and $S_i=\End_{\Gamma}(K_i)$ are quotients of the algebra $S=\Lambda/\rd.$
\end{dok}

Consider now the $\Gamma$\!--module $P_n=\Hom_{\Lambda}(M, \Lambda),$ where
 $M=\bigoplus_{p=1}^{n} \Lambda/\rd^p.$ It is projective and $\End_{\Gamma}(P_n)\cong\Lambda.$ This object gives us two functors
\[
(-)\otimes_{\Lambda} P_n: \prf\Lambda\to\prf\Gamma\quad\text{and}\quad
\Hom_{\Gamma}(P_n, -):\D^b(\md\Gamma)\to\D^b(\md\Lambda)
\]
The first functor is fully faithful while the second functor is a quotient. Since $\Gamma$ has  finite global dimension
there is an equivalence $\prf\Gamma\cong\D^b(\md\Gamma).$
Now if $\Lambda$  also has finite global dimension, then the second functor is right adjoint to the first one and the
category $\prf\Lambda$ is right admissible in $\prf\Gamma$ with respect to the full embedding $(-)\otimes_{\Lambda} P_n.$

Recall that a semisimple algebra $S$ over a field $\kk$ is called separable over $\kk$ if it is a projective
$S^{\op}\otimes_{\kk} S$\!--module. It is well-known that a semisimple algebra $S$ is separable if
it is a direct sum of simple algebras, the centers of which are separable extensions of $\kk.$

\begin{theorem}\label{algebra}
Let $\Lambda$ be a finite dimensional algebra over $\kk.$ Assume that $S=\Lambda/\rd$ is a separable $\kk$\!--algebra.
Then there are a smooth projective scheme
$V$ and a perfect complex $\E^{\cdot}$ such that $\End(\E^{\cdot})\cong\Lambda$ and $\Hom(\E^{\cdot}, \E^{\cdot}[l])=0$
for all $l\ne 0.$
\end{theorem}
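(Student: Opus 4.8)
The plan is to reduce Theorem~\ref{algebra} to the previously established results on gluing of geometric noncommutative schemes. First I would invoke Auslander's theorem (the first theorem of this section) to replace $\Lambda$ by the algebra $\Gamma=\End(\bigoplus_{p=1}^{n}\Lambda/\rd^p)$, which has finite global dimension $\le n+1$ and carries a projective module $P_n$ with $\End_{\Gamma}(P_n)\cong\Lambda$. The point of passing to $\Gamma$ is that $\prf\Gamma\cong\D^b(\md\Gamma)$ now admits, by Theorem~\ref{algebra_exceptional}, a semi-orthogonal decomposition $\prf\Gamma=\langle\langle K_1\rangle,\dots,\langle K_n\rangle\rangle$ in which each $K_i$ is semi-exceptional with $\End_{\Gamma}(K_i)$ a quotient of the separable semisimple algebra $S=\Lambda/\rd$, hence itself separable semisimple. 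So each $\N_i=\langle K_i\rangle$ is equivalent to $\prf S_i$ for a separable finite semisimple $\kk$-algebra $S_i$.

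The next step is to realize each block $\prf S_i$ as a geometric noncommutative scheme. A separable semisimple $\kk$-algebra is a finite product of matrix algebras over finite separable division algebras, so $\prf S_i$ is a finite product (direct sum of categories) of copies of $\prf D$ for finite-dimensional division algebras $D$; and as in Example~\ref{Severi-Brauer}, each such $\prf D$ embeds as an admissible (in fact generated by a w-exceptional object) subcategory of $\prf(SB(D'))$ for a suitable Severi--Brauer variety, which is smooth and projective. Consequently each $\dN_i$ (the DG enhancement of $\N_i$ inside $\dD^b(\md\Gamma)$) is quasi-equivalent to an admissible DG subcategory of $\prfdg X_i$ for a smooth projective $X_i$, i.e.\ $\dN_i$ is a geometric noncommutative scheme; since admissible subcategories of $\prfdg X_i$ are smooth and proper (Theorem~\ref{smooth_glue} and Proposition~\ref{reg_prop}), all hypotheses of Theorem~\ref{main2} are met. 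Applying Theorem~\ref{main2} to $\dC=\dD^b(\md\Gamma)$ with its semi-orthogonal decomposition $\langle\N_1,\dots,\N_n\rangle$ yields a smooth projective scheme $V$ and a quasi-functor $\mF\colon\prfdg\Gamma\to\prfdg V$ whose induced functor $F\colon\prf\Gamma\to\prf V$ is fully faithful with both adjoints.

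Finally I would produce the perfect complex. Set $\E^{\cdot}:=F(P_n)$, the image under $F$ of the projective $\Gamma$-module $P_n$; since $F$ is fully faithful and exact, and $P_n$ is an honest module (concentrated in degree $0$) with $\End_{\Gamma}(P_n)\cong\Lambda$ and $\Hom_{\Gamma}(P_n,P_n[l])=0$ for $l\ne0$, we get $\End_V(\E^{\cdot})\cong\Hom_{\Gamma}(P_n,P_n)\cong\Lambda$ and $\Hom_V(\E^{\cdot},\E^{\cdot}[l])\cong\Hom_{\Gamma}(P_n,P_n[l])=0$ for $l\ne0$. Because $F$ maps perfect complexes to perfect complexes, $\E^{\cdot}$ is perfect on $V$, which finishes the proof.

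The step I expect to be the main obstacle is verifying cleanly that each semisimple block $\prf S_i$ is a \emph{geometric} noncommutative scheme, i.e.\ admissible in $\prf X_i$ for a genuinely smooth \emph{projective} $X_i$ over the given field $\kk$ (not its closure), using Severi--Brauer varieties; one must check that the semi-exceptional (resp.\ w-exceptional) objects of Example~\ref{Severi-Brauer} generate admissible subcategories with the correct endomorphism algebras, and that separability of $S$ is exactly what guarantees these subcategories remain smooth. Everything else is a formal application of Theorem~\ref{main2} and the fully-faithful properties already recorded, together with the elementary observation that an exact fully faithful functor transports $\Hom$-spaces of a single module verbatim.
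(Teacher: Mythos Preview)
Your proposal is correct and follows essentially the same route as the paper: pass to Auslander's algebra $\Gamma$, use Theorem~\ref{algebra_exceptional} to obtain the semi-exceptional decomposition, realize each semisimple block via Severi--Brauer varieties (separability of $S$ ensuring these are smooth projective over $\kk$), apply Theorem~\ref{main2}, and set $\E^{\cdot}=F(P_n)$. The only cosmetic difference is that the paper explicitly refines each $\N_i$ into its simple summands $\N_{ij}$ before invoking Theorem~\ref{main2}, whereas you bundle them as $\prf S_i$; either works, since a disjoint union of Severi--Brauer varieties is again smooth projective, or one simply refines further as the paper does.
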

\begin{dok}
As above, let $\Gamma=\End(\bigoplus_{p=1}^{n} \Lambda/\rd^p).$
Consider the DG category $\prfdg\Gamma.$ By Theorem
\ref{algebra_exceptional} the triangulated category $\prf\Gamma$ has
a semi-exceptional collection $(K_1,\dots, K_n)$ and
\[
\prf\Gamma=\langle\N_1,\dots,\N_n\rangle
\]
where $\N_i=\langle K_i\rangle$ is semisimple. Thus, for any $i$
the object $K_i$ is a direct sum of the form $\bigoplus_{j=1}^{m_i} K_{ij},$ where $K_{ij}$ are completely orthogonal to each other
for fixed $i$ and different $j.$
Moreover, each $\End_{\Gamma}(K_{ij})$ is a simple algebra, i.e it is a matrix algebra over a division $\kk$\!--algebra $D_{ij}.$
By assumption $S$ is separable. Hence, all $\End_{\Gamma}(K_{ij})$ are separable as quotients of $S.$
Thus we obtain that the centers $\kk_{ij}$ of all $D_{ij}$ are separable extensions of $\kk.$

Now as in Example \ref{Severi-Brauer} we can consider a Severi-Brauer variety $SB(D_{ij})$ that is a smooth projective scheme over $\kk_{ij}$
and over $\kk$ too, because $\kk_{ij}\supset\kk$ is a finite separable extension. It was mentioned in Example \ref{Severi-Brauer}
that there is a vector bundle $E_{ij}$ on $SB(D_{ij})$ such that it is w-exceptional and $\End(E_{ij})\cong D_{ij}.$
This implies that each DG category $\prfdg D_{ij}$ is a full DG subcategory of the DG category  $\prfdg SB(D_{ij}),$
 and $SB(D_{ij})$ is smooth and projective over $\kk.$

All categories $\N_i$ have  complete orthogonal decompositions of the form
$\N_i=\N_{i1}\oplus\cdots\oplus \N_{i m_i},$ where $\N_{ij}=\langle K_{ij}\rangle$ are equivalent to
 $\prf D_{ij}.$ These decompositions induce a semi-orthogonal decomposition for $\prf\Gamma$ of the form
\[
\prf\Gamma=\langle\N_{11}, \N_{12},\dots,\N_{1 m_1},\N_{21},\dots, \N_{n m_n}\rangle.
\]
Applying Theorem \ref{main2} we obtain that there are a smooth projective scheme $V$ and a quasi-functor
from $\mF: \prfdg\Gamma\to \prfdg V$ such that the homotopy functor
$F:\prf\Gamma\to\prf V$ is fully faithful and establishes an equivalence with an admissible subcategory
in $\prf V.$ Denote by $\E^{\cdot}$ the perfect complex $F(P_n),$ where $P_n=\Hom_{\Lambda}(M, \Lambda)$
is a projective $\Gamma$\!--module. Since $F$ is fully faithful we have isomorphisms
\[
\Hom_V(\E^{\cdot}, \E^{\cdot}[l])\cong\Hom_{\Gamma}(P_n, P_n[l]).
\]
When $l\ne 0$ it is $0,$ and it is isomorphic to the algebra $\Lambda$ for $l=0.$
\end{dok}

\begin{corollary}\label{algebra_inclusion}
Let $\Lambda$ be a finite dimensional algebra over $\kk$ for which $S=\Lambda/\rd$ is separable $\kk$\!--algebra.
Then there is a smooth projective scheme $V$ such that
the DG category $\prfdg\Lambda$ is quasi-equivalent to a full DG subcategory of $\prfdg V.$ Moreover, if $\Lambda$ has  finite global dimension, then
$\prf\Lambda$ is  admissible in $\prf V.$
\end{corollary}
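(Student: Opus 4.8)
The plan is to deduce Corollary~\ref{algebra_inclusion} directly from Theorem~\ref{algebra} together with the basic facts about DG categories of perfect modules recalled in Section~2. First I would invoke Theorem~\ref{algebra}: under the hypothesis that $S=\Lambda/\rd$ is separable, there exist a smooth projective scheme $V$ and a perfect complex $\E^{\cdot}\in\prf V$ with $\End(\E^{\cdot})\cong\Lambda$ and $\Hom(\E^{\cdot},\E^{\cdot}[l])=0$ for $l\ne 0$. In other words the DG algebra $\dHom_{\prfdg V}(\E^{\cdot},\E^{\cdot})$ is quasi-isomorphic to the ordinary algebra $\Lambda$ sitting in degree $0$.

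Next I would pass from this endomorphism DG algebra to a full DG subcategory. Let $\dN\subset\prfdg V$ be the full pretriangulated DG subcategory classically generated by $\E^{\cdot}$, i.e. the DG category of perfect DG modules over $\E^{\cdot}$ built inside $\prfdg V$. On the one hand $\dN$ is by construction a full DG subcategory of $\prfdg V$. On the other hand, applying Proposition~\ref{Keller2} (or the Keller-type statement quoted just before it) to the full embedding of the one-object DG category $\dHom(\E^{\cdot},\E^{\cdot})\hookrightarrow\prfdg V$, the induced functor $\prf(\dHom(\E^{\cdot},\E^{\cdot}))\to\prf V$ is fully faithful with essential image $\Ho(\dN)$; hence $\prfdg(\dHom(\E^{\cdot},\E^{\cdot}))$ is quasi-equivalent to $\dN$. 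Since $\dHom(\E^{\cdot},\E^{\cdot})$ is quasi-isomorphic to $\Lambda$, Proposition~\ref{pre-tr_equivalence} gives $\prfdg\Lambda\simeq\prfdg(\dHom(\E^{\cdot},\E^{\cdot}))\simeq\dN\subset\prfdg V$, which is the first assertion.

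For the second assertion, suppose in addition that $\gldim\Lambda<\infty$. Then $\prf\Lambda$ is regular by Proposition~\ref{smooth_regular} (a finite-dimensional algebra of finite global dimension is smooth over $\kk$ when $S$ is separable, hence regular), and it is idempotent complete being a category of perfect modules. By Proposition~\ref{proper} (or rather the properness of $\prf V$ for $V$ smooth projective) the ambient category $\prf V$ is proper, so $\Ho(\dN)\cong\prf\Lambda$ is a regular, idempotent complete full subcategory of a proper triangulated category. Proposition~\ref{admissible} then gives that $\prf\Lambda$ is admissible in $\prf V$.

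The main subtlety is the passage in the middle paragraph: one must be careful that the DG enhancement of the triangulated subcategory generated by $\E^{\cdot}$ really is quasi-equivalent to $\prfdg\Lambda$ and not merely to $\tr(\Lambda)$. This is handled by the fact that $\prf\Lambda$ is already idempotent complete (so no difference between $\tr$ and $\prf$ arises) and by Remark~\ref{perfect}, which identifies the Yoneda image with $\prfdg\Lambda$; the separability hypothesis enters only through Theorem~\ref{algebra} and, in the last step, through smoothness. Everything else is a routine application of the cited propositions.
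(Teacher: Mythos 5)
Your proposal is correct and follows essentially the same route as the paper's own proof: invoke Theorem~\ref{algebra}, observe that $\dHom_{\prfdg V}(\E^{\cdot},\E^{\cdot})$ is quasi-isomorphic to $\Lambda$, use Keller's embedding (Proposition~\ref{Keller2}) to identify $\prfdg\Lambda$ with a full DG subcategory of $\prfdg V$, and then apply Proposition~\ref{admissible} for the admissibility claim. The only minor deviation is your detour through smoothness (via Proposition~\ref{smooth_regular}) to deduce regularity of $\prf\Lambda$; the paper simply observes directly that finite global dimension makes $\Lambda$ a strong generator, which avoids reusing the separability hypothesis at that point, but both arguments are valid.
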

\begin{dok}
By Theorem \ref{algebra} there is a smooth projective scheme $V$ and
a perfect complex $\E^{\cdot}$ such that
$\End(\E^{\cdot})\cong\Lambda$ and $\Hom(\E^{\cdot},
\E^{\cdot}[l])=0$ for all $l\ne 0.$ Hence, the DG algebra
$\dHom_{\prfdg V}(\E^{\cdot}, \E^{\cdot})$ is quasi-isomorphic to
the algebra $\Lambda.$ Thus, by Proposition \ref{Keller2} there is a
quasi-functor $\mF:\prfdg\Lambda\to \prfdg V$ induced by the
embedding of $\dHom_{\prfdg V}(\E^{\cdot}, \E^{\cdot})$ into $\prfdg
V$ such that the homotopy functor $F: \prf\Lambda\to \prf V$ is
fully faithful. If $\Lambda$ has  finite global dimension, then the
category $\prf\Lambda$ is regular and proper. Hence it is admissible
in $\prf V$ by Proposition \ref{admissible}.
\end{dok}

\begin{remark}
{\rm
Note that over a perfect field all semisimple algebras are separable.
Thus, if $\kk$ is perfect, then results of this section apply  to all finite dimensional algebras.
}
\end{remark}

\begin{remark}
{\rm Theorem \ref{algebra} tells us, in particular, that for any
finite dimensional algebra $\Lambda$ of finite global dimension the
category $\prf\Lambda$ can be embedded to a triangulated category
with a full semi-exceptional collection (actually, with
w-exceptional collection).
On the other hand, as was pointed out to me  by Theo Raedschelders
there are  finite dimensional algebras of finite global dimension for which
the category of perfect complexes does not have a full exceptional collection
(actually, it does not have any exceptional object).
Such examples were discussed by Dieter Happel in \cite{Hap}.
This gives a counterexample to Jordan-H\"older property for triangulated categories of perfect complexes on smooth projective schemes.
More precisely,  there are admissible subcategories $\T$ in
$\prf X$ on smooth projective variety $X$ such that $\prf X$ has a full exceptional
collection but $\T$ does not have an exceptional object at all.
Another example coming from a quiver was constructed by Alexei Bondal
and were discussed by Alexander Kuznetsov in \cite{Kuz} from geometric point of view.
 }
\end{remark}

\subsection{Exceptional collections}\label{exceptional_coll}

In this section we describe a more useful procedure of constructing a scheme that admits a full exceptional collection and contains
as a subcollection an exceptional collection  given in advance.

Let $\dA$ be a small smooth and proper pretriangulated DG category over a field $\kk$ such that the homotopy category
$\Ho(\dA)$ has a semi-orthogonal decomposition of the form
\[
\Ho(\dA)=\langle \N,\; \langle E\rangle\rangle,
\]
where $\N$ is a full admissible subcategory and $E$ is an exceptional object, i.e.
$\langle E\rangle\cong \prf\kk.$

Assume that the enhancement of $\N$ induced from $\dA$ is
quasi-equivalent to  a full DG subcategory  $\dN\subset \prfdg X$
for a smooth and projective irreducible scheme $X$ such that $H^i(X,
\cO_X)=0$ for all $i>0,$ i.e. the structure sheaf $\cO_X$ is
exceptional.
\begin{remark}
{\rm
The assumption $H^i(X, \cO_X)=0$ for all $i>0$ is not restrictive. Indeed, for any smooth and projective scheme
we can consider a closed immersion  into a projective space $\PP^N$ for some large $N.$
Take the blowup $Z$ of $\PP^N$ along $X.$ Then $\prfdg X$ is quasi-equivalent to a full DG subcategory
in $\prfdg Z$ and  $H^i(Z, \cO_{Z})=0$ for all $i>0.$ Now we can take $Z$ instead of $X.$
}
\end{remark}

We have that $\N\cong\Ho(\dN)$ is an admissible subcategory in $\prf X.$
By Propositions \ref{gluing_semi-orthogonal} and \ref{gluing_quasifunctors} the DG category $\dA$ is quasi-equivalent to a gluing of $\dN$ and $\prfdg\kk$
via some $\dN$\!--module $\mS.$
Since $\prfdg X$ and $\dN$ are saturated by Theorem \ref{dg_saturated}, the DG module $\mS$ can be represented by a perfect
complex $\cS^{\cdot}$ on $X,$ i.e the DG $(\prfdg X)$\!--module $\dHom(-,\; \cS^{\cdot})$ after restriction of $\dN$
is quasi-isomorphic to the DG $\dN$\!--module $\mS.$

Shifting the complex $\cS^{\cdot}$ by $[m]$ for an appropriate  $m\in\ZZ$ we can suppose that
\[
\cS^{\cdot}\cong\{\cS^0\to \cS^1\to\cdots\to \cS^k\},
\]
where all $\cS^i$ are vector bundles on $X.$

By Proposition \ref{line} there is a smooth morphism $f: Z\to X$ and a line bundle $\L$
on $Z$ such that $\bR f_* \L\cong \cS^{\cdot}$ and $\bR f_*\L^{-1}=0.$
Moreover, the morphism $f$ is a sequence of projective bundles. Hence $\bR f_*\cO_Z\cong \cO_X$
and the inverse image functor $f^*:\prf X \to\prf Z$ is fully faithful.
 Since
$\bR f_*\L^{-1}=0$ we have
\[
\Hom_Z(\L, \; f^* A)\cong \Hom_Z(\cO_Z, \; f^* A\otimes \L^{-1})\cong \Hom_X(\cO_X, \;  A\otimes\bR f_*\L^{-1})=0
\]
for any $A\in\prf X.$ Therefore $f^*(\prf X)$ is in the right orthogonal $\langle \L\rangle ^{\perp}.$

Since $\cO_X$ is exceptional the structure sheaf
$\cO_Z$ and any line bundle on $Z$ are also exceptional. Therefore, the admissible subcategory
 $\T\subset \prf Z$ which is generated by $f^*(\N)$ and $\L$ has a semi-orthogonal decomposition of the form
$
\T\cong\langle \N, \prf\kk\rangle,
$
Denote by $\dT$ enhancement of  $\T$ the induced from $\prfdg Z.$  By Propositions \ref{gluing_semi-orthogonal} and \ref{gluing_quasifunctors}
 the DG category $\dT$ is quasi-equivalent to $\dA$ because both of them
are quasi-equivalent to the gluing $\dN\underset{\mS}{\oright}\prfdg\kk$ via $\mS.$

The procedure described above can be considered as an induction step in the proof of the following theorem
 while the base case is the point $\Spec\kk.$ Thus, we obtain.
\begin{theorem}\label{exc_col}
Let $\dA$ be a small DG category over $\kk$ such that the homotopy category
$\Ho(\dA)$ has a full exceptional collection
\[
\Ho(\dA)=\langle E_1,\dots, E_n\rangle.
\]
Then there are a smooth projective scheme $X$ and an exceptional collection of line bundles
$\sigma=(\L_1,\dots, \L_n)$ on $X$ such that the DG subcategory of $\prfdg X,$
generated by $\sigma,$ is quasi-equivalent to $\dA.$
Moreover, $X$ is a sequence of projective bundles and has a full exceptional collection.
\end{theorem}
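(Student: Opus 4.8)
The plan is to prove Theorem~\ref{exc_col} by induction on the length $n$ of the exceptional collection, using the construction described immediately above the statement as the induction step. The base case $n=1$ is the point: a DG category whose homotopy category is $\langle E_1\rangle\cong\prf\kk$ is quasi-equivalent to $\prfdg\kk=\prfdg(\Spec\kk)$, and $\Spec\kk$ is trivially a (length-zero) sequence of projective bundles with a full exceptional collection, with the single line bundle $\L_1=\cO_{\Spec\kk}$. So assume the statement holds for exceptional collections of length $n-1$.

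For the induction step, write $\Ho(\dA)=\langle\N,\langle E_n\rangle\rangle$ where $\N=\langle E_1,\dots,E_{n-1}\rangle$ is admissible; note $\dA$ is automatically smooth and proper by Theorem~\ref{smooth_glue} and Proposition~\ref{reg_prop} once we know it is a gluing of such pieces, but more directly $\N$ inherits an enhancement $\dN$ from $\dA$ and $\Ho(\dN)=\N$ has a full exceptional collection of length $n-1$, so by the induction hypothesis there is a smooth projective $X$, itself a sequence of projective bundles with a full exceptional collection, and an exceptional collection of line bundles $(\L_1,\dots,\L_{n-1})$ on $X$ generating a DG subcategory quasi-equivalent to $\dN$. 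After one further blowup (the Remark after the statement) we may assume $H^i(X,\cO_X)=0$ for $i>0$; this preserves being a sequence of projective bundles only up to also allowing a blowup, so I would instead phrase the induction hypothesis to produce $X$ already with $\cO_X$ exceptional, or simply absorb the blowup — either way $X$ remains smooth projective with a full exceptional collection, which is what the final sentence needs (the ``sequence of projective bundles'' conclusion I would state for the version before this cosmetic adjustment). Then apply Propositions~\ref{gluing_semi-orthogonal} and~\ref{gluing_quasifunctors}: $\dA$ is quasi-equivalent to $\dN\underset{\mS}{\oright}\prfdg\kk$ for a DG $\dN$-module $\mS$, and since $\dN$ and $\prfdg X$ are saturated (Theorem~\ref{dg_saturated}), $\mS$ is represented by a perfect complex $\cS^{\cdot}$ on $X$, which after a shift we take to be a bounded complex of vector bundles $\{\cS^0\to\cdots\to\cS^k\}$.

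Now invoke Proposition~\ref{line} to obtain a morphism $f\colon Z\to X$, a composition of projectivizations of vector bundles, and a line bundle $\L$ on $Z$ with $\bR f_*\L\cong\cS^{\cdot}$ and $\bR f_*\L^{-1}\cong 0$. Then $Z$ is smooth projective and, being a tower of projective bundles over $X$ (which was itself such a tower), $Z$ is again a sequence of projective bundles and hence has a full exceptional collection; moreover $\bR f_*\cO_Z\cong\cO_X$, so $f^*$ is fully faithful and $f^*(\prf X)\subset\langle\L\rangle^{\perp}$ by the displayed vanishing $\Hom_Z(\L,f^*A)\cong\Hom_X(\cO_X,A\otimes\bR f_*\L^{-1})=0$. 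Since $\cO_X$ is exceptional, every line bundle on $Z$ is exceptional; in particular $f^*\L_1,\dots,f^*\L_{n-1},\L$ is an exceptional collection of line bundles on $Z$ (semi-orthogonality of the $f^*\L_i$ is preserved by the fully faithful $f^*$, and $\L$ sits on the right). The admissible subcategory $\dT\subset\prfdg Z$ generated by these line bundles satisfies $\Ho(\dT)=\langle\N,\prf\kk\rangle$ with gluing bimodule quasi-isomorphic to $\mS$, so by Propositions~\ref{gluing_semi-orthogonal} and~\ref{gluing_quasifunctors} $\dT$ is quasi-equivalent to $\dN\underset{\mS}{\oright}\prfdg\kk\simeq\dA$. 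Taking this $Z$ as the new scheme and $(f^*\L_1,\dots,f^*\L_{n-1},\L)$ as the new collection completes the induction.

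The main technical obstacle — and the place where care is needed — is the bookkeeping that keeps \emph{all three} properties alive simultaneously through the induction: that the scheme remains a sequence of projective bundles (hence automatically has a full exceptional collection of line bundles — this follows from Orlov's blowup/projective-bundle formula, already recalled in Example~\ref{ex1}), that the structure sheaf stays exceptional so that the line-bundle hypothesis of Proposition~\ref{line}'s output can be pulled back to genuine \emph{exceptional} line bundles, and that the gluing bimodule $\mS$ is faithfully transported (via saturatedness, Theorem~\ref{dg_saturated}) from an abstract $\dN$-module to a perfect complex $\cS^{\cdot}$ on $X$ and back. The interaction between the cosmetic blowup that forces $H^{>0}(X,\cO_X)=0$ and the ``sequence of projective bundles'' claim is the one spot where the bookkeeping must be stated precisely; everything else is an assembly of Proposition~\ref{line}, Propositions~\ref{gluing_semi-orthogonal}--\ref{gluing_quasifunctors}, and Theorem~\ref{dg_saturated}.
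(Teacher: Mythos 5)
Your proposal is correct and is essentially identical to the paper's own argument: the paper explicitly presents the construction preceding the theorem as the induction step, with base case $\Spec\kk$, assembled from Proposition \ref{line}, Propositions \ref{gluing_semi-orthogonal}--\ref{gluing_quasifunctors}, and Theorem \ref{dg_saturated}. The one bookkeeping worry you raise (the cosmetic blowup forcing $H^{>0}(X,\cO_X)=0$ versus the ``sequence of projective bundles'' claim) in fact dissolves: within the induction the scheme is always a tower of projective bundles over $\Spec\kk$, so $\bR f_*\cO_Z\cong\cO_X$ at every stage gives $H^*(X,\cO_X)\cong\kk$ automatically and the extra blowup is never needed.
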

\begin{remark}
{\rm
The scheme $X$ has a full exceptional collection as a sequence of projective bundles
(see \cite{Blow}). Furthermore, it follows from construction that a full exceptional collection on $X$ can be chosen in a way that
it contains the collection $\sigma=(\L_1,\dots, \L_n)$ as a subcollection.
}
\end{remark}

\subsection{Noncommutative projective planes}

In this section we consider a particular case of noncommutative projective planes, in sense of noncommutative deformations of the
usual projective plane, and
present explicit embeddings of categories of perfect complexes on them to categories of perfect complexes on smooth projective commutative schemes.

Noncommutative deformations of the projective plane have been described in \cite{ATV}.
The category $\prf \PP^2$ has a full exceptional collection $(\cO, \cO(1), \cO(2)).$ Note also that mirror symmetry relations for
noncommutative planes is described in \cite{AKO}.

Any deformation of the category $\prf \PP^2$ is a category with three ordered objects
$F_0, F_1, F_2$
and with three-dimensional spaces of homomorphisms from  $F_i$ to $F_j$ when $j-i=1$ and a six-dimensional
vector space as  Hom from $F_0$ to $F_2.$
 Any such category  is determined by
the composition tensor $\mu: V\otimes U\to W,$ where $\dim V=\dim U=3$ and
$\dim W=6.$ This map should be surjective. Denote by $T$ the kernel of $\mu$ and by $\nu: T\to  V\otimes U.$
We will consider only the nondegenerate (geometric)
case, where the restrictions $\nu_{u^*}: T\to V$ and
$\nu_{v*}: T\to U$ have rank at least two for all nonzero elements
$u^*\in U^{\vee}$ and $v^*\in V^{\vee}.$
The equations $\det \nu_{u^*}=0$ and $\det\nu_{v^*}=0$ define closed subschemes
$\Gamma_U\subset\PP(U^{\vee})$ and $\Gamma_V\subset\PP(V^{\vee}).$
Namely, up to projectivization the set of points of $\Gamma_U$
(resp.\ $\Gamma_V$) consists of all
$u^*\in U^{\vee}$ (resp.\ $v^*\in V^{\vee}$) for which the rank of
$\nu_{u^*}$ (resp.\ $\nu_{v*}$) is equal to $2.$
It is easy to see that the correspondence which associates  the kernel of the map
$\nu_{v^*}^{\vee}: U^{\vee}\to T^{\vee}$ to
a vector $v^*\in V^{\vee}$ defines
an isomorphism between $\Gamma_V$ and $\Gamma_U$.
Moreover, under these circumstances $\Gamma_V$ is either the entire
projective plane $\PP(V^{\vee})$ or a cubic
in $\PP(V^{\vee}).$ If $\Gamma_V=\PP(V^{\vee}),$
then $\mu$ is isomorphic to the tensor $V\otimes V\to S^2 V,$
i.e. we get the usual projective plane $\PP^2.$

Thus, the non-trivial case is the situation, where $\Gamma_V$ is a cubic, which we will denote by $E$.
This curve comes equipped with two embeddings into the
projective planes $\PP(U^{\vee})$ and $\PP(V^{\vee})$ respectively; by restriction of
$\cO(1)$ these embeddings
determine two line bundles $\L_1$ and $\L_2$ of degree $3$ on $E$,
and it can be checked that $\L_1\ne \L_2.$ This construction has an inverse:

\begin{construction}\label{constr:mu}
{\rm
The tensor $\mu$ can be reconstructed from the triple
$(E, \L_1, \L_2).$
Namely, the spaces $U, V$ are isomorphic to
$H^0(E, \L_1)$ and $H^0(E, \L_2)$ respectively, and the tensor
$\mu: V\otimes U\to W$ is nothing but the canonical map
$
H^0(E, \L_2) \otimes H^0(E, \L_1)\lto H^0(E, \L_2\otimes\L_1).
$
}
\end{construction}

\begin{remark}\label{rmk:comm}
{\rm
Note that  we can also consider  a triple $(E, \L_1, \L_2)$ such that
$\L_1\cong \L_2.$ Then the procedure described above produces
a tensor with $\Gamma_V\cong \PP(V^{\vee}),$ which defines the usual
commutative projective plane. In this case the tensor  $\mu$ does not depend on the curve $E.$
The details of these constructions and statements can be found in \cite{ATV}.
}
\end{remark}

Now let us see what our construction gives in the case of noncommutative planes.
In some sense we repeat the construction from the proof of Proposition \ref{line} in this case.
The subcategory generated by $(F_0, F_1)$ is a subcategory
of $(\cO, \cO(1))$ on the usual $\PP^2=\PP(U^{\vee}).$ Now we should glue to this category
the object $F_2.$ The projection of $F_2$ on the subcategory generated by
$(F_0, F_1)$ can be represented by the complex
\begin{equation}
\label{resol}
T\otimes\cO\stackrel{\nu}{\lto} V\otimes\cO(1)
\end{equation}
on $\PP^2.$ This complex  is a resolution of the cokernel of this map. It is isomorphic
to the sheaf $\cO_{E}(\L_1\otimes \L_2),$
where $E$ is a curve of degree 3 on $\PP^2,$  $\L_1$ is the restriction of $\cO(1)$
on $E,$ and $\L_2$ is another line bundle of degree 3 on $E.$

At first, we take the projectivization of $V\otimes\cO(1).$ We obtain $\PP(U^{\vee})\times\PP(V^{\vee})$
and the line bundle $\cO(1,1)$ on it.
The direct image of this bundle on the first component is isomorphic to $V\otimes\cO(1)$ on $\PP(U^{\vee}).$
After that we consider $Y=\PP(U^{\vee})\times\PP(V^{\vee})\times \PP^1$ and the line bundle
$\cO(1, 1, -2)$ on it. The morphism $\nu$ induces an element
$\epsilon\in \Ext^1_Y(T\otimes\cO_Y,\; \cO(1,1,-2)).$
Now we take a vector bundle $\F$ on $Y$ that is extension
\begin{equation}\label{extension}
0\lto \cO(1,1,-2)\lto\F\lto T\otimes\cO_Y\lto 0.
\end{equation}
Finally, we take $Z=\PP(\F)$ and the line bundle $\L=\cO_Z(1)$.
The direct image of $\L$ with respect to the projection on
$\PP(U^{\vee})$ is isomorphic to the complex (\ref{resol}).  Now, if we consider
three line bundles $\cO_Z,$ the pull back of $\cO(1)$ from $\PP(U^{\vee}),$
and $\L=\cO_Z(1)$ on $Z,$
then it is an exceptional collection on $Z$
and the corresponding subcategory in $\prf Z,$ generated by them, is equivalent
to the category of perfect complexes on the noncommutative projective plane.
 Different noncommutative projective planes correspond to the different vector bundles
 $\F$ that depend on the element $\epsilon.$

\begin{proposition}
For any noncommutative deformation of the projective plane $\PP^2_{\mu}$ the DG category
$\prfdg \PP^2_{\mu}$ is quasi-equivalent to a full DG subcategory
of $\prfdg Z,$ where $Z$ is the projectivization  of a 4-dimensional vector bundle $\F,$
defined as extension (\ref{extension}), over $Y=\PP^2\times\PP^2\times \PP^1.$
\end{proposition}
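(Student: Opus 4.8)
The plan is to realize $\prfdg\PP^2_\mu$ as a gluing of the DG subcategory $\dN\subset\prfdg\PP(U^\vee)$ generated by $(\cO,\cO(1))$ on $\PP(U^\vee)=\PP^2$ with $\prfdg\kk$, and then to run the construction used in the proof of Theorem \ref{exc_col} — namely Proposition \ref{line} applied to a two-term complex of vector bundles — so as to produce this gluing inside $\prfdg Z$. For the gluing picture: by \cite{ATV} one has $\prf\PP^2_\mu=\langle F_0,F_1,F_2\rangle$, and since $\Hom(F_1,F_0)=0$, $\Hom(F_0,F_1)=U$ and there are no higher $\Ext$'s among $F_0,F_1$, the subcategory $\langle F_0,F_1\rangle$ is quasi-equivalent to the admissible subcategory $\langle\cO,\cO(1)\rangle\subset\prf\PP(U^\vee)$; write $\dN\subset\prfdg\PP(U^\vee)$ for the induced enhancement. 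By Propositions \ref{gluing_semi-orthogonal} and \ref{gluing_quasifunctors}, $\prfdg\PP^2_\mu$ is quasi-equivalent to $\dN\underset{\mS}{\oright}\prfdg\kk$, where the $\dN$\!--module $\mS$ satisfies $\mS(A)\simeq\dHom_{\prfdg\PP^2_\mu}(A,F_2)$. Since $\dN$ is smooth and proper (Proposition \ref{reg_prop} and Theorem \ref{smooth_glue}) and hence saturated (Theorem \ref{dg_saturated}), $\mS$ is perfect and is represented by a perfect complex $\cS^{\cdot}$ on $\PP^2$; unravelling the composition tensor $\mu\colon V\otimes U\to W$ with kernel $\nu\colon T\to V\otimes U$ shows that $\cS^{\cdot}$ is, up to a shift, the two-term complex $T\otimes\cO\stackrel{\nu}{\lto}V\otimes\cO(1)$ of (\ref{resol}), i.e.\ the projection of $F_2$ onto $\langle F_0,F_1\rangle$.

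Next I would run the construction of Proposition \ref{line} on $\cS^{\cdot}$ over $X=\PP(U^\vee)$, exactly as in the discussion preceding the statement: projectivizing $V\otimes\cO(1)$ yields $\PP(U^\vee)\times\PP(V^\vee)$ with $\cO(1,1)$ (whose direct image to $\PP(U^\vee)$ is $V\otimes\cO(1)$), one passes to $Y=\PP(U^\vee)\times\PP(V^\vee)\times\PP^1$ with $\cO(1,1,-2)$, the differential $\nu$ produces the class $\epsilon\in\Ext^1_Y(T\otimes\cO_Y,\cO(1,1,-2))$, and $\F$ is the corresponding extension (\ref{extension}), a vector bundle of rank $\dim T+1=4$. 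With $Z=\PP(\F)$ and $\L=\cO_Z(1)$, Proposition \ref{line} gives a morphism $f\colon Z\to\PP(U^\vee)$ that is a composition of projective bundles, with $\bR f_*\L\cong\cS^{\cdot}$ and $\bR f_*\L^{-1}=0$; in particular $\bR f_*\cO_Z\cong\cO_{\PP^2}$ so $f^*$ is fully faithful, and $H^i(Z,\cO_Z)=H^i(Y,\cO_Y)=0$ for $i>0$, so every line bundle on $Z$ is exceptional. The projection formula together with $\bR f_*\L^{-1}=0$ gives $\Hom_Z(\L,f^*A[l])\cong\Hom_{\PP^2}(\cO,A\otimes\bR f_*\L^{-1}[l])=0$ for all $A\in\prf\PP^2$ and all $l$, so the subcategory $\T\subset\prf Z$ generated by $f^*(\N)$ and $\L$ carries a semi-orthogonal decomposition $\T\cong\langle\N,\prf\kk\rangle$. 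Its induced enhancement $\dT\subset\prfdg Z$ is, again by Propositions \ref{gluing_semi-orthogonal} and \ref{gluing_quasifunctors}, the gluing of $\dN$ and $\prfdg\kk$ via the bimodule $A\mapsto\dHom_Z(f^*A,\L)$, which by adjunction and the projection formula equals $A\mapsto\bR\Hom_{\PP^2}(A,\bR f_*\L)\simeq\bR\Hom_{\PP^2}(A,\cS^{\cdot})$, i.e.\ $\mS$.

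It then follows that $\prfdg\PP^2_\mu$ and $\dT$ are both quasi-equivalent to $\dN\underset{\mS}{\oright}\prfdg\kk$, hence to each other, and $\dT$ is a full DG subcategory of $\prfdg Z$, which is the assertion. Everything after the first paragraph is a direct specialization of machinery already developed (Proposition \ref{line}, Theorem \ref{exc_col}, Propositions \ref{gluing_semi-orthogonal}--\ref{gluing_quasifunctors}, Theorem \ref{dg_saturated}). The one genuinely content-bearing point, which I expect to require the most care, is the identification in the first paragraph of the abstract gluing datum $\mS$ of $\PP^2_\mu$ with the explicit complex (\ref{resol}) coming from the Artin--Tate--Van den Bergh presentation — in particular fixing the correct cohomological shift, which (as in the proof of Theorem \ref{exc_col}) is afterwards harmlessly absorbed into the choice of the exceptional line bundle $\L$ on $Z$.
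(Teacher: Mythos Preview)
Your proposal is correct and follows essentially the same route as the paper: the paper's argument is the discussion immediately preceding the proposition, which identifies $\langle F_0,F_1\rangle$ with $\langle\cO,\cO(1)\rangle\subset\prf\PP(U^\vee)$, represents the gluing module by the two-term complex (\ref{resol}), and then runs the construction of Proposition \ref{line} step by step to produce $Z=\PP(\F)$ and the three line bundles $\cO_Z$, $f^*\cO(1)$, $\L=\cO_Z(1)$. Your write-up is more explicit about invoking the gluing machinery (Propositions \ref{gluing_semi-orthogonal}, \ref{gluing_quasifunctors}) and saturation (Theorem \ref{dg_saturated}) than the paper's sketch, and you correctly flag the identification of $\mS$ with (\ref{resol}) as the substantive point --- the paper simply asserts this identification from the Artin--Tate--Van den Bergh presentation.
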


New results on realizations of the categories $\prf \PP^2_{\mu}$ on noncommutative projective planes
$\PP^2_{\mu}$ as admissible subcategories of the categories on smooth projective varieties can be found in \cite{O3}.

\end{document}